\documentclass[reqno]{llncs}
\usepackage[utf8]{inputenc}
\usepackage{amsmath}
\usepackage{amsfonts}
\usepackage{amssymb}
\usepackage{enumitem}
\usepackage[svgnames]{xcolor}
\usepackage{colortbl}

\makeatletter
\let\bfseries=\undefined
\DeclareRobustCommand\bfseries
{\not@math@alphabet\bfseries\mathbf
  \boldmath\fontseries\bfdefault\selectfont}
\makeatother

\newcommand{\Z}{{\mathbb Z}}

\newcommand{\R}{\mathbb R}


\DeclareMathOperator{\supp}{supp}

\DeclareMathOperator{\rank}{rank}


\def\ve#1{\mathchoice{\mbox{\boldmath$\displaystyle\bf#1$}}
{\mbox{\boldmath$\textstyle\bf#1$}}
{\mbox{\boldmath$\scriptstyle\bf#1$}}
{\mbox{\boldmath$\scriptscriptstyle\bf#1$}}}

\newcommand\veb{{\ve b}}

\newcommand\ved{{\ve d}}

\newcommand\veg{{\ve g}}

\newcommand\veu{{\ve u}}
\newcommand\vev{{\ve v}}
\newcommand\vew{{\ve w}}
\newcommand\vex{{\ve x}}
\newcommand\vey{{\ve y}}
\newcommand\vez{{\ve z}}


\newcommand{\eoproof}{\hspace*{\fill} $\square$ \vspace{5pt}}

\usepackage{algorithm}
\usepackage{ifthen}
\usepackage{tikz}
\usepackage{subcaption}
\usepackage{here}


\newcommand{\DeclareBracket}[3]{
  \newcommand{#1}[2][]{%
  \ifthenelse%
  {\equal{##1}{}}%
  {\left#2##2\right#3}%
  {\csname ##1l\endcsname#2##2\csname ##1r\endcsname#3}}}
\DeclareBracket\set\{\}

\usepackage[margin=1.5in]{geometry}

\usepackage[colorlinks=true,breaklinks=true,bookmarks=true,urlcolor=blue,
     citecolor=blue,linkcolor=blue,bookmarksopen=false,draft=false]{hyperref}
\usepackage{cleveref}


\newenvironment{customthm}[1]
  {\innercustomthm}
  {\endinnercustomthm}


\pagestyle{plain}

\begin{document}

\title{Circuit Walks in Integral Polyhedra}

\author{Steffen Borgwardt\inst{1}\and Charles Viss\inst{2}}


\institute{\email{\href{mailto:steffen.borgwardt@ucdenver.edu}{steffen.borgwardt@ucdenver.edu}};
University of Colorado Denver \and
\email{\href{mailto:charles.viss@ucdenver.edu}{charles.viss@ucdenver.edu}};
University of Colorado Denver 
}

\date{\today}

\maketitle

\begin{abstract}
Circuits play a fundamental role in the theory of linear programming due to their intimate connection to algorithms of combinatorial optimization and the efficiency of the simplex method. We are interested in better understanding the properties of circuit walks in integral polyhedra. In this paper, we introduce a hierarchy for integral polyhedra based on different types of behavior exhibited by their circuit walks. Many problems in combinatorial optimization fall into the most interesting categories of this hierarchy -- steps of circuit walks only stop at integer points, at vertices, or follow actual edges. We classify several classical families of polyhedra within the hierarchy, including $0/1$-polytopes, polyhedra defined by totally unimodular matrices, and more specifically matroid polytopes, transportation polytopes, and partition polytopes. Finally, we prove three characterizations of the simple polytopes that appear in the bottom level of the hierarchy where all circuit walks are edge walks, showing that such polytopes constitute a generalization of simplices and parallelotopes.
\end{abstract}

\noindent {\bf{Keywords}:} edge walks, circuit walks, diameter, linear programming, integer programming, total unimodularity\\\\
{\bf{MSC}: 52B05, 90C05, 90C08, 90C10}

\section{Introduction}

The search for a polynomial pivot rule for the simplex method is one of the fundamental open questions in linear programming. It motivates the studies of the combinatorial and circuit diameters of polyhedra. The {\em combinatorial diameter} of a polyhedron refers to the maximum number of steps needed to connect any pair of vertices by an edge walk. It is a lower bound on the best-case performance of the simplex method -- in particular, a family of $n$-dimensional polyhedra with $f$ facets whose diameter is super-polynomial in $f$ and $n$ would disprove the existence of a polynomial pivot rule for the simplex algorithm. While this is a classical field of study, there remain many open questions.

One of the attempts to gain a better understanding of the behavior of edge walks is the study of {\em circuit walks} and the associated \textit{circuit diameters}. These generalize the concept of walking along the edges of a polyhedron to walking along its {\em circuits}. Whereas the famous Hirsch Conjecture is false in general \cite{kw-67,s-11}, the analogous \textit{Circuit Diameter Conjecture} \cite{bfh-14}, which asks whether the circuit diameter of a polyhedron is bounded by $f - n$, remains open \cite{bsy-18,sy-15b}.

We introduce some notation \cite{bfh-14,bfh-16}: Given a polyhedron $P = \{ \vex \in \R^n \colon A \vex = \veb, B \vex \leq \ved \}$, \textit{the set of circuits of $P$}, denoted $\mathcal{C}(A,B)$, consists of those $\veg \in \ker(A) \setminus \{ \ve 0 \}$ normalized to coprime integer components for which $B \veg$ is support-minimal over the set $\{B \vex \colon \vex \in \ker(A) \setminus \{\ve0 \} \}$. Circuits also appear as \textit{elementary vectors} in the literature \cite{r-69}. It can be shown that the set of circuits consists of all potential edge directions of $P$ as the right-hand side vectors $\veb$ and $\ved$ vary \cite{g-75}. Note that $\mathcal{C}(A, B)$ is dependent on the representation of a polyhedron. When a polyhedron is not given through an $\mathcal{H}$-representation, we assume that its set of circuits corresponds to that of a minimal representation; i.e., that each constraint appears as a facet.

The directions of $\mathcal{C}(A, B)$ can be used to traverse $P$ via a \textit{circuit walk}:

\begin{definition}[Circuit Walk]\label{def:circuit}
  Let $P=\{ \vex \in \R^n \colon A \vex = \veb, B \vex \leq \ved \}$ be a polyhedron. For two vertices $\vev^{(1)},\vev^{(2)}$ of $P$, we call a sequence $\vev^{(1)}=\vey^{(0)},\ldots,\vey^{(k)}=\vev^{(2)}$ a \textbf{circuit walk} of length $k$ if for $i=0,\ldots,k-1$ we have:
\begin{enumerate}
\item $\vey^{(i)}\in P$,
\item $\vey^{(i+1)}= \vey^{(i)} + \alpha_i\veg^{(i)}$ for some $\veg^{(i)}\in \mathcal{C}(A,B)$  and $\alpha_i>0$, and
\item $\vey^{(i)}+\alpha\veg^{(i)}$ is infeasible for all $\alpha>\alpha_i$. 
\end{enumerate}
If $\vey^{(i)}$ is a vertex of $P$ for $i=0,\ldots,k$, we call the circuit walk a \textbf{vertex walk}. If $\vey^{(i)}$ has integer components for $i=0,\ldots,k$, we call the circuit walk \textbf{integral} (and \textbf{non-integral} otherwise).
\end{definition}

\noindent Informally, circuit walks travel from an initial vertex to a terminating vertex by following circuit directions and taking steps of maximal length. In particular, these steps may go through the interior of $P$.

Further, as a generalization of the edge directions, $\mathcal{C}(A,B)$ provides an optimality certificate for any linear program over $P$ \cite{g-75}. Thus, there are many settings in mathematical programming in which algorithms construct circuit walks by taking augmenting steps along circuits \cite{bk-84,blswz-99,b-77,dhl-15,ft-97,gdl-14,how-11}. For example, the computation of an improving circuit direction is a viable approach for dealing with highly degenerate vertices in the simplex method \cite{gdl-14}. Additionally, an augmentation scheme along so-called \textit{greedy} circuit directions takes only polynomially many steps \cite{dhl-15,how-11}. The challenge here lies in finding a greedy circuit direction -- it is open whether this can be done in polynomial time. However, it is possible to efficiently compute circuits for a \textit{steepest-descent} augmentation scheme \cite{dhl-15}, which terminates in at most $|\mathcal{C}(A, B)|$ steps and runs in strongly polynomial time for polyhedra defined by totally unimodular matrices \cite{bv-18}.

We are interested in the behavior of circuit walks constructed by such algorithms. We are especially interested in circuit walks within \textit{integral polyhedra} -- those polyhedra whose vertices have integer coordinates -- due to the intimate relationship between circuits and methods from combinatorial optimization \cite{b-13,bdfm-18,bh-17,kps-17}: Many algorithms for classical problems from combinatorial optimization, such as transportation or network flow problems, can be interpreted as circuit walks in the underlying polyhedra. For example, in the context of a minimum-cost flow problem, circuits correspond to directed cycles in the associated network. It follows that the minimum mean cycle-canceling algorithm serves as an efficient implementation of the steepest-descent circuit augmentation scheme \cite{dhl-15,gdl-14}. Graph-theoretic interpretations of circuits can also be used to prove bounds on the circuit diameter of classical polyhedra such as matching polytopes and the traveling salesman polytope \cite{kps-17}.

By definition, a circuit walk in an integral polyhedron begins and ends at vertices. In general, however, the intermediate steps of the walk need not travel along edges, terminate at vertices, or even visit integral points. In this paper, we therefore define a hierarchy of integral polyhedra based on which of these various behaviors appear in a polyhedron. We contrast our approach with that of \cite{bdf-16} in which different relaxations of \Cref{def:circuit} (such as circuit walks that do not take maximal steps or that may even leave the polyhedron) are used to define a hierarchy of circuit diameters. Our main results regarding the proposed hierarchy are presented in \Cref{sec:results}.

\subsection*{Outline}

First, in \Cref{sec:hierarchy} we formally define the hierarchy and show that all of its levels are distinct (\Cref{thm:hierarchy}). We also relate the hierarchy to an important challenge in the study of circuit diameters: all circuit walks in a polyhedron are vertex walks if and only if all of its circuit walks are reversible (\Cref{thm:reversible}). Short proofs are provided in \Cref{sec:proofsforthms}.

Next, in \Cref{sec:specialpolyhedra} we discuss the relationship between the hierarchy and two well-known families of integral polyhedra: $0/1$-polytopes and those defined by totally unimodular matrices (or \textit{TU polyhedra}). We show that circuit walks in general $0/1$-polytopes can exhibit any behavior. However, we prove that all circuit walks in TU polyhedra are integral (\Cref{thm:tu_circuits}), which implies that all circuit walks in TU $0/1$-polytopes are vertex walks (\Cref{cor:tu_0/1-polytopes}). We classify examples of these polyhedra within the hierarchy (\Cref{thm:examples}): matroid polytopes, transportation polyhedra, and so-called \textit{bounded-size} and \textit{fixed-size} partition polytopes. These results, which include a characterization of the edges and circuits of the bounded-size partition polytope, are proven in \Cref{sec:proofofthm1,sec:proof_thm_examples}.  

Finally, in \Cref{sec:edges=circuits} we provide several characterizations of the simple polytopes in which all circuit walks are necessarily edge walks (\Cref{thm:inner_and_elem_cones,thm:opposite_inner_cones,thm:non-degerate_edges=circuits}). We show that such polytopes constitute a highly-symmetric generalization of the simplex and the $n$-parallelotope which we call the \textit{$(n, d)$-parallelotope.} Proofs for these results are given in \Cref{sec:proof_thms_ecw}. 

\section{Results}\label{sec:results}

\subsection{A Hierarchy of Integral Polyhedra}\label{sec:hierarchy}
We begin with the introduction of a hierarchy for integral polyhedra based on the behavior of their circuit walks. In particular, we classify a polyhedron according to the types of intermediate points which are reachable via circuit walks. See \Cref{fig:hierarchy_reversible} for a visualization of the hierarchy.

The levels of the hierarchy are successively more restrictive. At the top, least-restrictive level of the hierarchy are integral polyhedra with general circuit walk (\textit{GCW}) behavior -- namely, their circuit walks may be non-integral. Below this are \textit{ICW polyhedra}: integral polyhedra in which all circuit walks are necessarily integral. This is followed by \textit{VCW polyhedra}: integral polyhedra in which all circuit walks are vertex walks. The bottom, most-restrictive level consists of \textit{ECW polyhedra} in which the only circuit walks are edge walks. Low-dimensional examples of polyhedra from each level of the hierarchy are given in \Cref{fig:examples}. The behaviors of the circuit walks in these polyhedra yield our first result, formally proven in \Cref{sec:proof_thm_hierarchy}.

\begin{theorem}\label{thm:hierarchy}
All integral polyhedra fall into the hierarchy based on circuit walk behavior depicted in \Cref{fig:hierarchy_reversible}. The four levels of the hierarchy -- GCW, ICW, VCW, and ECW polyhedra -- are distinct.
\end{theorem}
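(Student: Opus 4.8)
The plan is to establish the four-level hierarchy in two stages: first showing that the containments ECW $\subseteq$ VCW $\subseteq$ ICW $\subseteq$ GCW hold by definition, then exhibiting a separating example for each of the three strict inclusions. The containments are essentially immediate from \Cref{def:circuit}: if a circuit walk only traverses edges, then every intermediate point $\vey^{(i)}$ is a vertex (edges of a polytope connect vertices, and a maximal step along an edge direction from a vertex lands on an adjacent vertex), so every ECW polyhedron is a VCW polyhedron; if every intermediate point is a vertex of an integral polyhedron, then it is an integer point, so every VCW polyhedron is an ICW polyhedron; and an ICW polyhedron is a fortiori a GCW polyhedron since GCW imposes no restriction beyond integrality of the polyhedron itself. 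I would state these implications in one or two sentences each.

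The substance is the distinctness, for which I would supply three low-dimensional examples (presumably the ones in \Cref{fig:examples}). First, a GCW polyhedron that is not ICW: a two- or three-dimensional integral polytope possessing a circuit whose maximal step from some vertex terminates at a non-integer point in the relative interior of a face — the standard example is a polytope like $\mathrm{conv}\{(0,0),(2,0),(0,2),(2,1)\}$ (or a similar small configuration) where a circuit direction such as $(1,-1)$ or $(0,1)$ crosses the polytope and hits the slanted facet at a point with a fractional coordinate. Second, an ICW polyhedron that is not VCW: an integral polytope where every circuit walk lands on integer points, but some maximal circuit step terminates at an integer point that is not a vertex — for instance a dilated or elongated polytope (such as a $1\times 2$ or $2\times 2$ box-like shape, or a triangle with an interior lattice point on an edge) in which a circuit step stops at a lattice point in the relative interior of a facet. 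Third, a VCW polyhedron that is not ECW: an integral polytope in which every circuit walk visits only vertices, but some circuit direction is not an edge direction, so that a single circuit step jumps between two non-adjacent vertices — the classic example here is the $3$-cube (or a square pyramid), whose main diagonals are circuits but not edges, yet every maximal step along any circuit from a vertex still lands on a vertex. For each example I would verify the defining property by a direct inspection of the (finitely many) circuits and the (finitely many) vertices, checking where maximal steps terminate.

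I do not anticipate a genuine obstacle here; the only mildly delicate point is making sure the chosen examples are \emph{tight} — i.e., that the GCW-not-ICW example really does fail to be ICW (one must exhibit a specific non-integral maximal step, not merely a non-integral point of the polytope), that the ICW-not-VCW example genuinely has no non-integral circuit walk at all (this requires checking \emph{every} circuit and \emph{every} starting vertex, which is why one keeps the dimension and facet count small), and similarly that the VCW-not-ECW example has no non-vertex circuit walk. For the $3$-cube this last check is clean: its circuits are the $12$ edge directions together with the $6$ face diagonals and $4$ space diagonals, and from any vertex a maximal step along any of these lands on another vertex, yet the diagonals are not edges. I would organize the proof as: (i) the chain of definitional containments; (ii) three labeled paragraphs, one per example, each with an explicit $\mathcal{H}$-representation, an explicit list of circuits, and the verification; optionally (iii) a remark pointing to \Cref{fig:examples} for the pictures. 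Since the remaining theorems in the paper (e.g.\ the $0/1$-polytope discussion) already furnish polytopes at every level, one could alternatively cite those, but self-contained small examples make the distinctness proof cleanest.
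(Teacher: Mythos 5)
Your overall strategy is the same as the paper's: note that the containments ECW $\subseteq$ VCW $\subseteq$ ICW $\subseteq$ GCW are definitional, and then separate the levels with small explicit examples (the paper uses the four two-dimensional polygons of \Cref{fig:examples}). The containment chain and the first separating example (a circuit direction whose maximal step from a vertex stops at a fractional point on a slanted facet) are fine.

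However, your concrete candidates for the other two separations fail, and the failure points to a misunderstanding of what the circuits of these polytopes are. The $3$-cube $[0,1]^3$ does \emph{not} have its face and space diagonals as circuits: with constraint matrix $B=\binom{I}{-I}$, the vector $B\veg$ for a diagonal such as $(1,1,0)$ has support strictly containing the support of $B(1,0,0)$, so only the unit vectors $\pm\vece_i$ are support-minimal. The cube is an $n$-parallelotope and hence ECW (this is exactly \Cref{thm:non-degerate_edges=circuits}), so it cannot separate VCW from ECW; your explicit list of ``$12+6+4$'' circuits is wrong. Circuits are the potential edge directions as the right-hand side varies, not arbitrary vertex-difference vectors. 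The same problem afflicts your ICW-not-VCW candidates: a $1\times 2$ or $2\times 2$ box is again a parallelotope (ECW), and the triangle $\mathrm{conv}\{(0,0),(2,0),(0,2)\}$ is a simplex whose only circuits are $\pm(1,0),\pm(0,1),\pm(1,-1)$, all of whose maximal steps run along edges, so it too is ECW. What you actually need are polytopes, like the octagons in \Cref{fig:example_b,fig:example_c}, in which a direction such as $(1,1)$ or $(1,-1)$ is a genuine edge direction somewhere (hence a circuit) but, started from a different vertex, its maximal step traverses a chord ending at a non-vertex lattice point (for ICW-not-VCW) or at a non-adjacent vertex (for VCW-not-ECW). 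Until the examples are replaced and the promised exhaustive verification is carried out for correct polytopes, the distinctness half of the theorem is not established.
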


\definecolor{polytopeColor}{gray}{0.95}
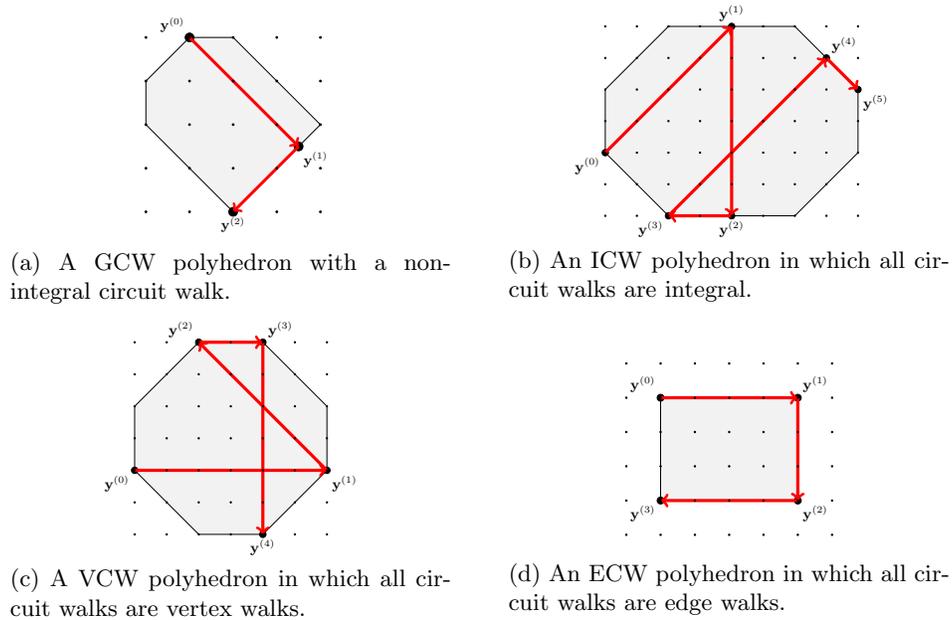
\begin{figure}[t]
    \centering
    \begin{subfigure}{0.42 \textwidth}
    \centering
    \scalebox{.58}{
			\begin{tikzpicture}[scale=1]
			\useasboundingbox (-1,-2.5) rectangle (5,2.3) ;
					\coordinate (v1) at (0,1);
					\coordinate (v2) at (1,2);
					\coordinate (v3) at (2,2);
					\coordinate (v4) at (4,0);
					\coordinate (v5) at (2,-2);
					\coordinate (v6) at (0,0);
					\coordinate (y1) at (3.5, -0.5);
					\draw[black, fill= polytopeColor] (v1)--(v2)--(v3)--(v4)--(v5)--(v6)--(v1);							
					\draw [fill, black] (v2) circle [radius=0.1];
					\draw [fill, black] (v5) circle [radius=0.1];
					\draw [fill, black] (y1) circle [radius=0.1];
					\node[above left] at (v2) {$\vey^{(0)}$};
					\node[below] at (v5) {$\vey^{(2)}$};
					\node[below right] at (y1) {$\vey^{(1)}$};
					\draw[draw=red, line width= 2,  ->] (v2)--(y1);
					\draw[draw=red, line width= 2,  ->] (y1)--(v5);
					\foreach \x in {0,1,...,4}{
        		\foreach \y in {-2,-1,...,2}{
				 			\draw [fill, black] (\x,\y) circle [radius=0.02];
						}
					}
			\end{tikzpicture}}
			\caption{A GCW polyhedron with a non-integral circuit walk.}\label{fig:example_a}
    \end{subfigure}
    \qquad
    \begin{subfigure}{0.42 \textwidth}
    \centering
    \scalebox{.6}{
    \begin{tikzpicture}[scale=.7]
    \useasboundingbox (0,-3.5) rectangle (8,3) ;
					\coordinate (v1) at (0,1);
					\coordinate (v2) at (2,3);
					\coordinate (v3) at (6,3);
					\coordinate (v4) at (8,1);
					\coordinate (v5) at (8,-1);
					\coordinate (v6) at (6,-3);
					\coordinate (v7) at (2,-3);
					\coordinate (v8) at (0,-1);
					\coordinate (y1) at (4,3);
					\coordinate (y2) at (4,-3);
					\coordinate (y4) at (7,2);
					\draw[black, fill= polytopeColor] (v1)--(v2)--(v3)--(v4)--(v5)--(v6)--(v7)--(v8)--(v1);							
					\draw [fill, black] (v8) circle [radius=0.1];
					\draw [fill, black] (y1) circle [radius=0.1];
					\draw [fill, black] (y2) circle [radius=0.1];
					\draw [fill, black] (v7) circle [radius=0.1];
					\draw [fill, black] (y4) circle [radius=0.1];
					\draw [fill, black] (v4) circle [radius=0.1];
					\node[below left] at (v8) {$\vey^{(0)}$};
					\node[above] at (y1) {$\vey^{(1)}$};
					\node[below] at (y2) {$\vey^{(2)}$};
					\node[below left] at (v7) {$\vey^{(3)}$};
					\node[above right] at (y4) {$\vey^{(4)}$};
					\node[below right] at (v4) {$\vey^{(5)}$};
					\draw[draw=red, line width= 2,  ->] (v8)--(y1);
					\draw[draw=red, line width= 2,  ->] (y1)--(y2);
					\draw[draw=red, line width= 2,  ->] (y2)--(v7);
					\draw[draw=red, line width= 2,  ->] (v7)--(y4);
					\draw[draw=red, line width= 2,  ->] (y4)--(v4);
					\foreach \x in {0,1,...,8}{
        		\foreach \y in {-3,-2,...,3}{
				 			\draw [fill, black] (\x,\y) circle [radius=0.02];
						}
					}
			\end{tikzpicture}}
			\caption{An ICW polyhedron in which all circuit walks are integral.}\label{fig:example_b}
    \end{subfigure}
    
    \begin{subfigure}{0.42 \textwidth}
    \centering
    \scalebox{.6}{
			\begin{tikzpicture}[scale=.71]
            \useasboundingbox (0,-3.5) rectangle (6,4) ;
					\coordinate (v1) at (0,1);
					\coordinate (v2) at (2,3);
					\coordinate (v3) at (4,3);
					\coordinate (v4) at (6,1);
					\coordinate (v5) at (6,-1);
					\coordinate (v6) at (4,-3);
					\coordinate (v7) at (2,-3);
					\coordinate (v8) at (0,-1);
					\draw[black, fill= polytopeColor] (v1)--(v2)--(v3)--(v4)--(v5)--(v6)--(v7)--(v8)--(v1);							
					\draw [fill, black] (v8) circle [radius=0.1];
					\draw [fill, black] (v5) circle [radius=0.1];
					\draw [fill, black] (v2) circle [radius=0.1];
					\draw [fill, black] (v3) circle [radius=0.1];
					\draw [fill, black] (v6) circle [radius=0.1];
					\node[below left] at (v8) {$\vey^{(0)}$};
					\node[below right] at (v5) {$\vey^{(1)}$};
					\node[above left] at (v2) {$\vey^{(2)}$};
					\node[above right] at (v3) {$\vey^{(3)}$};
					\node[below] at (v6) {$\vey^{(4)}$};
					\draw[draw=red, line width= 2,  ->] (v8)--(v5);
					\draw[draw=red, line width= 2,  ->] (v5)--(v2);
					\draw[draw=red, line width= 2,  ->] (v2)--(v3);
					\draw[draw=red, line width= 2,  ->] (v3)--(v6);
					\foreach \x in {0,1,...,6}{
        		\foreach \y in {-3,-2,...,3}{
				 			\draw [fill, black] (\x,\y) circle [radius=0.02];
						}
					}
			\end{tikzpicture} }
			\caption{A VCW polyhedron in which all circuit walks are vertex walks.}\label{fig:example_c}
    \end{subfigure}
    \qquad
    \begin{subfigure}{0.42 \textwidth}
    \centering
    \scalebox{.6}{
			\begin{tikzpicture}[scale=.76]
            \useasboundingbox (0,-2.3) rectangle (6,4.4) ;
					\coordinate (v1) at (1,2);
					\coordinate (v2) at (5,2);
					\coordinate (v3) at (5,-1);
					\coordinate (v4) at (1,-1);
					\draw[black, fill= polytopeColor] (v1)--(v2)--(v3)--(v4)--(v1);							
					\draw [fill, black] (v1) circle [radius=0.1];
					\draw [fill, black] (v2) circle [radius=0.1];
					\draw [fill, black] (v3) circle [radius=0.1];
					\draw [fill, black] (v4) circle [radius=0.1];
					\node[above left] at (v1) {$\vey^{(0)}$};
					\node[above right] at (v2) {$\vey^{(1)}$};
					\node[below right] at (v3) {$\vey^{(2)}$};
					\node[below left] at (v4) {$\vey^{(3)}$};
					\draw[draw=red, line width= 2,  ->] (v1)--(v2);
					\draw[draw=red, line width= 2,  ->] (v2)--(v3);
					\draw[draw=red, line width= 2,  ->] (v3)--(v4);
					\foreach \x in {0,1,...,6}{
        		\foreach \y in {-2,-1,...,3}{
				 			\draw [fill, black] (\x,\y) circle [radius=0.02];
						}
					}
			\end{tikzpicture} }
\caption{An ECW polyhedron in which all circuit walks are edge walks.}\label{fig:example_d}
    \end{subfigure}
    \caption{Two-dimensional examples of polyhedra from each level of the hierarchy.}
    \label{fig:examples}
\end{figure}

\begin{figure}[b]
\begin{center}
\includegraphics[width=.6\textwidth]{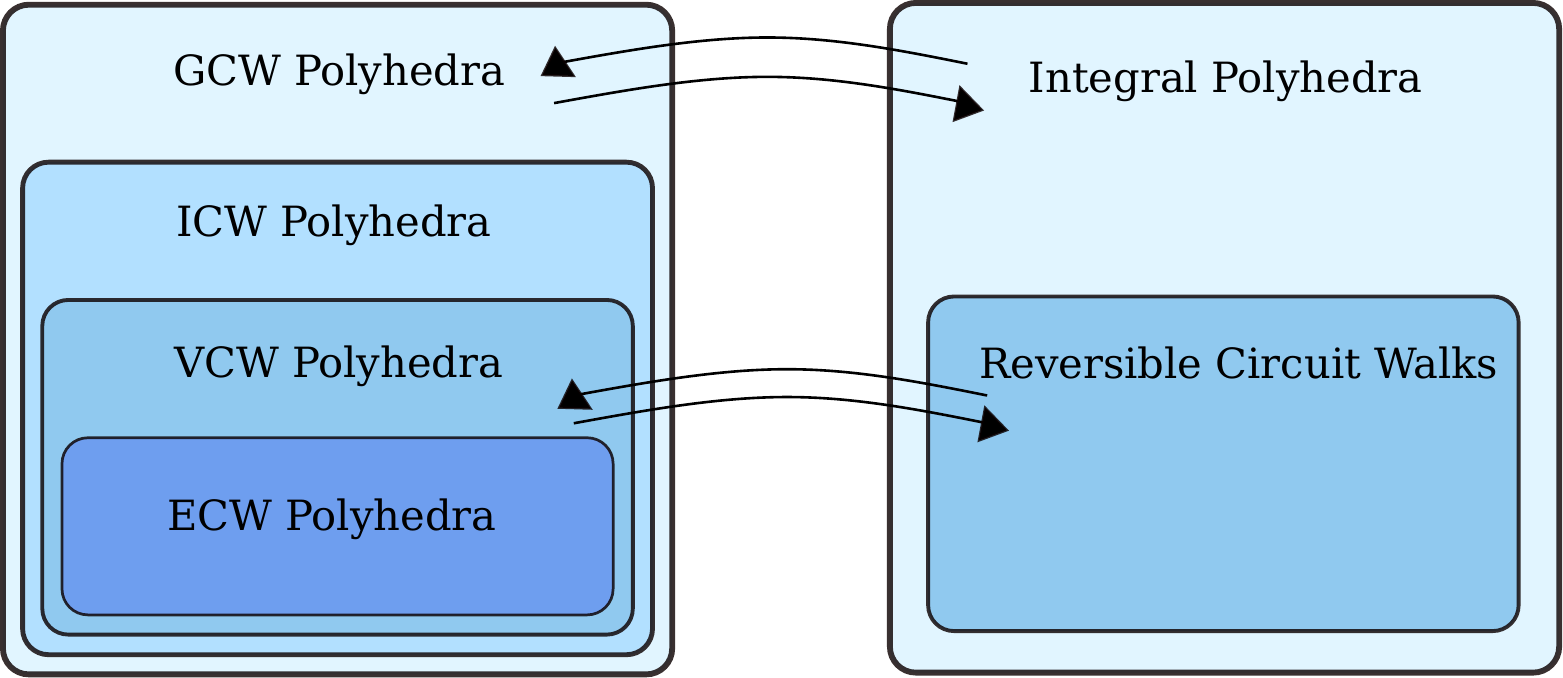}
\end{center}
\caption{A hierarchy for integral polyhedra based on the behavior of their circuit walks.  The VCW level is equivalent to the polyhedra in which all circuit walks are reversible.}\label{fig:hierarchy_reversible}
\end{figure}

The location of a polyhedron within this hierarchy has strong implications on the behavior of a circuit augmentation scheme when applied to the polyhedron. The middle levels of the hierarchy ensure that the algorithm will only visit either integral points or vertices of the polyhedron. At the bottom level, any circuit augmentation scheme is necessarily some variation of the simplex method.

We can also connect this hierarchy to one of the biggest challenges in the study of circuit diameters: Unlike edge walks, circuit walks are not necessarily reversible \cite{bfh-14}. For example, reversing the walks depicted in \Cref{fig:example_a,fig:example_b} does not yield maximal circuit walks. However, as depicted in \Cref{fig:hierarchy_reversible} and proven in \Cref{sec:reversible}, the hierarchy characterizes the polyhedra in which all circuit walks are reversible:

\begin{theorem}\label{thm:reversible}
All circuit walks in a polyhedron are reversible if and only if all circuit walks in the polyhedron are vertex walks.
\end{theorem}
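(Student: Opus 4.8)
The plan is to prove the two directions of the biconditional separately, using \Cref{def:circuit} as the working definition of a maximal circuit step. Throughout, recall that a point $\vey$ of a polyhedron $P$ is a vertex exactly when the feasible cone of $P$ at $\vey$ is pointed, equivalently when $\vey$ does not lie in the relative interior of a segment contained in $P$; and that a circuit step from $\vey^{(i)}$ along $\veg^{(i)}$ is \emph{maximal} precisely when $\vey^{(i)}+\alpha\veg^{(i)}$ leaves $P$ for every $\alpha>\alpha_i$.

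For the easy direction --- ``vertex walks $\Rightarrow$ reversible'' --- suppose every circuit walk in $P$ is a vertex walk, and let $\vey^{(0)},\dots,\vey^{(k)}$ be a circuit walk with steps $\alpha_i\veg^{(i)}$. I want to show the reversed sequence $\vey^{(k)},\dots,\vey^{(0)}$, built from the directions $-\veg^{(k-1)},\dots,-\veg^{(0)}$ and the same step lengths, is again a (maximal) circuit walk. Since $-\veg^{(i)}\in\mathcal{C}(A,B)$ whenever $\veg^{(i)}\in\mathcal{C}(A,B)$ (the circuit set is symmetric) and all intermediate points $\vey^{(i)}$ are feasible, the only thing to check is maximality of each reversed step: that from $\vey^{(i+1)}$ one cannot move further than $\vey^{(i)}$ in direction $-\veg^{(i)}$, i.e. $\vey^{(i+1)}-\beta\veg^{(i)}\notin P$ for $\beta>\alpha_i$. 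Here is where the vertex hypothesis enters: both $\vey^{(i)}$ and $\vey^{(i+1)}$ are vertices of $P$. If the reversed step from $\vey^{(i+1)}$ could overshoot $\vey^{(i)}$, then the whole segment from $\vey^{(i+1)}$ through $\vey^{(i)}$ and slightly beyond lies in $P$, so $\vey^{(i)}$ is in the relative interior of a segment in $P$, contradicting that $\vey^{(i)}$ is a vertex. Hence each reversed step is maximal, and the reversed walk is a valid circuit walk; applying the hypothesis again shows it too is a vertex walk, but we do not even need that for reversibility.

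For the converse --- ``reversible $\Rightarrow$ vertex walks'' --- argue by contrapositive: assume $P$ has a circuit walk with some non-vertex intermediate point, and produce a non-reversible circuit walk. Pick a circuit walk $\vey^{(0)},\dots,\vey^{(k)}$ and an index $i$ with $0<i<k$ such that $\vey^{(i)}$ is not a vertex; choose $i$ minimal with this property. Consider the truncated walk $\vey^{(0)},\dots,\vey^{(i)}$, which is itself a circuit walk of length $i$ ending at the non-vertex $\vey^{(i)}$. I claim its reversal cannot be a valid maximal circuit walk. The reversal would have to start at $\vey^{(i)}$, move along $-\veg^{(i-1)}$ to $\vey^{(i-1)}$, and so on; but maximality of the \emph{first} reversed step requires $\vey^{(i)}-\beta\veg^{(i-1)}\notin P$ for all $\beta>\alpha_{i-1}$. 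Since $\vey^{(i)}$ is not a vertex, the feasible cone at $\vey^{(i)}$ is not pointed, and one can use this --- together with the fact that $\vey^{(i)}$ was reached by a maximal step along $\veg^{(i-1)}$, so $+\veg^{(i-1)}$ points out of $P$ at $\vey^{(i)}$ --- to show that either $-\veg^{(i-1)}$ can be extended past $\vey^{(i-1)}$ inside $P$ (killing maximality of the first reversed step) or else the reversed sequence fails to be maximal at a later step. Concretely: a maximal circuit walk, reversed, is a circuit walk if and only if each $\vey^{(i)}$ is a ``local vertex'' with respect to the line $\vey^{(i)}+\R\veg^{(i-1)}$, and a non-vertex point admits, by pointedness failure, a direction along which $P$ contains a full line through it or at least a two-sided segment, which can be exploited against maximality.

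I expect the converse to be the main obstacle, and the delicate point is turning ``$\vey^{(i)}$ is not a vertex'' into a concrete failure of maximality of a \emph{specific} reversed circuit step rather than a vague statement about the feasible cone. The clean way to handle it is: since the step from $\vey^{(i-1)}$ to $\vey^{(i)}$ is maximal, there is a facet (or equality) of $P$ that becomes tight exactly at $\vey^{(i)}$ and blocks $+\veg^{(i-1)}$; the reversed step from $\vey^{(i)}$ along $-\veg^{(i-1)}$ is maximal iff it is blocked at $\vey^{(i-1)}$ by some other constraint. One then shows that if $\vey^{(i)}$ is not a vertex there is freedom to perturb the construction (or to pick the original walk judiciously, e.g. the walk through \Cref{fig:example_a} already exhibits this) so that no constraint is tight at $\vey^{(i-1)}$ in the blocking sense, making the reversed step non-maximal and hence the reversed walk invalid; this is essentially the observation already noted in the text that reversing the walks in \Cref{fig:example_a,fig:example_b} fails. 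Packaging this into a general argument --- handling the case where $\vey^{(i-1)}$ happens to be a vertex but $\vey^{(i)}$ is not, and ensuring the produced non-reversibility is genuine rather than an artifact of choosing the wrong step lengths --- is the part requiring the most care.
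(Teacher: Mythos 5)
Your first direction (all circuit walks are vertex walks $\Rightarrow$ all are reversible) is correct and is essentially the paper's argument: overshooting $\vey^{(i)}$ on the reversed step would place the vertex $\vey^{(i)}$ in the relative interior of a segment of $P$.

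The converse, however, has a genuine gap, and it is exactly at the point you flag as delicate. First, the truncated sequence $\vey^{(0)},\dots,\vey^{(i)}$ is not a circuit walk under \Cref{def:circuit}: a circuit walk must terminate at a vertex of $P$, and $\vey^{(i)}$ is by assumption not one. So exhibiting a failure of its reversal says nothing about whether all circuit walks are reversible. Second, even setting that aside, your claimed failure point is wrong: with $i$ chosen minimal, $\vey^{(i-1)}$ \emph{is} a vertex, so by the very argument of your first direction the reversed step from $\vey^{(i)}$ along $-\veg^{(i-1)}$ does stop maximally at $\vey^{(i-1)}$. The non-pointedness of the feasible cone at $\vey^{(i)}$ is irrelevant to maximality of a step that \emph{starts} there; what you need is non-maximality of a reversed step that \emph{arrives} at $\vey^{(i)}$, and your construction has no such step.

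The missing idea is to extend the walk forward rather than truncate it. Since $\vey^{(i)}$ is a non-vertex point of $P$, it lies in the strict relative interior of a face $F$ of positive dimension. Append to $\vey^{(0)},\dots,\vey^{(i)}$ a sequence of maximal steps along edge directions of $F$ leading from $\vey^{(i)}$ to a vertex $\veu$ of $F$. The concatenation is a genuine circuit walk from the vertex $\vey^{(0)}$ to the vertex $\veu$, and its reversal fails precisely at $\vey^{(i)}$: the reversed step arriving at $\vey^{(i)}$ travels along (the negative of) an edge direction of $F$, i.e.\ a direction contained in the affine hull of $F$, and because $\vey^{(i)}$ is in the strict relative interior of $F$ one can continue past $\vey^{(i)}$ in that direction while remaining in $F\subseteq P$. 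Hence that reversed step is not maximal and the walk is not reversible. This is the paper's construction, and it cleanly replaces the case analysis and perturbation you sketch at the end.
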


\subsection{0/1-polytopes and Totally Unimodular Matrices}\label{sec:specialpolyhedra}

Next, we discuss how two important families of polyhedra from combinatorial optimization relate to the hierarchy: \textit{0/1-polytopes} and \textit{TU polyhedra}.

0/1-polytopes are widely studied due to their relationship to classical combinatorial optimization problems involving binary decisions. Their combinatorial diameter satisfies the Hirsch Conjecture \cite{n-89} and hence also the Circuit Diameter Conjecture.

We note that in general, circuit walks in 0/1-polytopes need not be integral. Consider the example in \Cref{fig:0_1_example}. The edge direction $(1,1,0)$ is a circuit. However, when taking a step in this direction starting at the vertex $(0,0,0)$, we reach the non-integral midpoint $(\frac{1}{2}, \frac{1}{2}, 0)$ of an edge. 

\begin{figure}[b]
\begin{center}
\begin{tikzpicture}
[scale=2.0, vertices/.style={draw, fill=black, circle, inner sep=0.5pt}]

\filldraw[fill=black!05, draw=black] (0,0)--(1,0)--(1.33,1.33)--(0,1)--cycle;

\node[vertices, label=below left:{$(0,0,0)$}] (a) at (0,0) {};
\node[vertices, label=below right:{$(1,0,0)$}] (b) at (1,0) {};
\node[vertices, label=right:{$(1,1,1)$}] (c) at (1.33,1.33) {};
\node[vertices, label=above left:{$(0,0,1)$}] (d) at (0,1) {};
\node[vertices, label=left:{$(0,1,0) \ $}] (e) at (.33,.33) {};
\node[vertices] (f) at (.61,.2) {};

\fill [fill opacity=0.2,fill=black!05!black] (0,1) -- (1.33,1.33) -- (.33,.33) -- cycle;

\fill [fill opacity=0.1,fill=black!05!black] (1,0) -- (1.33,1.33) -- (0.33,0.33) -- cycle;

\fill [fill opacity=0.04,fill=black!05!black] (0,0) -- (.33,.33) -- (0,1) -- cycle;

\fill [fill opacity=0.07,fill=black!05!black] (1,0) -- (1.33,1.33) -- (0,1) -- cycle;

\foreach \to/\from in {a/b,b/c,c/d,d/a,b/d}
	\draw [line width = 1.1, -] (\to)--(\from);

\foreach \to/\from in {e/a,e/b,e/c,e/d}
	\draw [dashed, opacity = 0.3, line width = 1.0] (\to)--(\from);
	
\draw [fill, black] (a) circle [radius=0.03];
\draw [fill, black] (b) circle [radius=0.03];
\draw [fill, black] (c) circle [radius=0.03];
\draw [fill, black] (d) circle [radius=0.03];
\draw [fill, black] (e) circle [radius=0.025];

\draw[draw=red, line width= 2,  ->] (a)--(f);

\end{tikzpicture}
\caption{A 0/1-polytope in $\R^3$ with non-integral circuit walks.}\label{fig:0_1_example}
\end{center}
\end{figure}
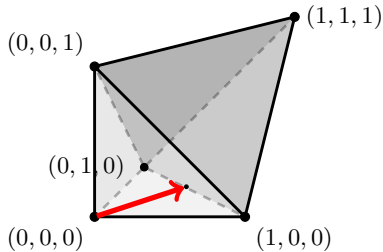

There are important classes of 0/1-polytopes from combinatorial optimization which also exhibit GCW behavior. For example, \textit{matroid polytopes} are a family of 0/1-polytopes in which circuit walks need not be integral. Given a matroid $M$ with ground set $E$ and rank function $f$, the matroid polytope $P(f)$ associated with $M$ is the convex hull of the incidence vectors $\vex \in \{0,1\}^E$ of the independent sets of $M$. We use a common inequality representation \cite{t-84} to describe the possible behaviors of the circuit walks in $P(f)$ in \Cref{sec:matroid}.

Another important class of integral polyhedra are those defined by totally unimodular matrices -- which we call \textit{TU polyhedra}. These polyhedra are guaranteed to be integral for any integral right-hand side, and are extensively studied due to appearing in transportation, assignment, and network flow problems. Their combinatorial (and circuit) diameter is polynomially bounded \cite{bseh-12,df-94} and linear programming over them is efficient \cite{t-86}.

We show that all TU polyhedra are in fact ICW. This suggests that any algorithm which traverses such a polyhedron via a circuit walk has a combinatorial interpretation.

\begin{theorem}\label{thm:tu_circuits}
Let $P = \{ \vex \in \R^n \colon A \vex = \veb, B \vex \leq \ved \}$ be an integral polyhedron whose constraint matrix $\binom{A}{B}$ is totally unimodular. Then all circuit walks in $P$ are integral.
\end{theorem}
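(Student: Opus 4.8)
The plan is to show that along any single circuit step we never leave the lattice $\Z^n$, and then conclude by induction on the length of the walk. So suppose $\vey$ is an integer point of $P$ and $\veg \in \Circuits(A,B)$ is a circuit direction. Since $\binom{A}{B}$ is totally unimodular, $\ker(A)$ is spanned by rational vectors, and the circuits normalized to coprime integer components are precisely (up to sign) the support-minimal integer vectors in $\ker(A)$; a standard fact for TU matrices is that these support-minimal kernel vectors already have entries in $\{0, \pm 1\}$. (This is the key structural input: a circuit of a TU matrix is a $\{0,\pm1\}$-vector. It follows from Cramer's rule applied to the square submatrix of $A$ indexed by the support of $\veg$ minus one column — all maximal minors are $0, \pm 1$.) I would state and prove this as a preliminary lemma.

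Next, consider the step $\vey^{+} = \vey + \alpha \veg$ where $\alpha > 0$ is maximal subject to feasibility. Feasibility of $\vey + \alpha\veg$ is governed only by the inequality constraints $B\vex \le \ved$ (the equalities $A\vex = \veb$ hold automatically since $\veg \in \ker A$ and $\vey \in P$), so the maximal step length is
\[
\alpha = \min\left\{ \frac{d_j - (B\vey)_j}{(B\veg)_j} \;:\; (B\veg)_j > 0 \right\}.
\]
The numerator $d_j - (B\vey)_j$ is an integer because $\ved$, $B$, and $\vey$ are all integral. For the denominator, since $\veg \in \{0,\pm1\}^n$ and each row of $B$ comes from a TU matrix, I would argue that $(B\veg)_j \in \{0, \pm 1\}$ as well — indeed, any such row–vector product is the sum of at most $\rank$-many $\{0,\pm1\}$ entries, but more carefully one uses that the augmented matrix $\binom{A}{B}$ being TU forces $B\veg$ to be support-minimal over $\{B\vex : \vex \in \ker A \setminus \{\ve 0\}\}$, and one can show the minimal such products are $\{0,\pm1\}$-valued by the same Cramer's-rule argument applied to $\binom{A}{B}$. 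Hence $(B\veg)_j = 1$ for the binding row $j$, and therefore $\alpha$ is a ratio of an integer over $1$, i.e. $\alpha \in \Z$. Consequently $\vey^{+} = \vey + \alpha\veg \in \Z^n$.

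Finally, by induction: the circuit walk starts at a vertex $\vev^{(1)}$, which is integral since $P$ is an integral polyhedron; each step preserves integrality by the argument above; so every intermediate point $\vey^{(i)}$ lies in $\Z^n$, and the walk is integral. The main obstacle I anticipate is the precise justification that a circuit of a TU matrix — and more importantly the relevant entries of $B\veg$ — are $\{0,\pm1\}$-valued; this needs the right form of the Cramer's-rule / support-minimality argument, and possibly a separate treatment of rows of $B$ versus rows of $A$, since it is the combined matrix $\binom{A}{B}$ that is assumed totally unimodular. I would isolate this in a lemma and handle it carefully, as the rest of the proof is a routine integrality bookkeeping argument.
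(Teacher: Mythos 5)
Your proposal is correct and follows essentially the same route as the paper: the key structural input you isolate -- that for a totally unimodular $\binom{A}{B}$ every circuit satisfies $\veg \in \{0,\pm1\}^n$ and $B\veg \in \{0,\pm1\}^m$, proved via Cramer's rule on a suitable augmented matrix whose kernel is spanned by $(\veg, B\veg)^\T$ -- is exactly the paper's \Cref{lem:circuit_one-dim,lem:circuit_determinant}, and your min-ratio computation of the integral step length is equivalent to the paper's unit-step argument. The one spot you flag as needing care (the Cramer's-rule matrix must include the rows of $B$ annihilating $\veg$, not just rows of $A$) is precisely what the paper's rank-$(n-1)$ characterization of circuits supplies.
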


We provide a proof of \Cref{thm:tu_circuits} in \Cref{sec:proofofthm1}, but we see another good way to prove the claim from a careful extension of Proposition 3.3 in \cite{o-10}. The merit of our approach is that it provides a generalization to polyhedra in any representation and requires only basic linear algebra.

Note that total unimodularity is not a necessary condition for the ICW property. This can be observed in the polyhedra from \Cref{fig:example_b,fig:example_c}, which can be represented via non-TU constraint matrices. Additionally, total unimodularity is not a sufficient condition for the VCW property. For instance, the \textit{transportation polytope}, whose circuits are characterized in \cite{bdfm-18}, can have many non-vertex circuit walks. We discuss the behavior of these walks in \Cref{sec:tp}.

However, by combining \Cref{thm:tu_circuits} with the fact that all integral points in 0/1-polytopes are vertices, we immediately see that all 0/1-polytopes defined by TU matrices are VCW. 
\begin{corollary}\label{cor:tu_0/1-polytopes}
Let $P = \{ \vex \in \R^n \colon A \vex = \veb, B \vex \leq \ved \}$ be an integral polyhedron whose constraint matrix $\binom{A}{B}$ is totally unimodular. If $P$ is a 0/1-polytope, then all circuit walks in $P$ are vertex walks.
\end{corollary}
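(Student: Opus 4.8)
The plan is to combine \Cref{thm:tu_circuits} with a completely elementary observation about $0/1$-polytopes: every integral point of a $0/1$-polytope is a vertex of that polytope. To see this, suppose $P \subseteq [0,1]^n$ is a $0/1$-polytope and $\vex \in P \cap \Z^n$. Since $P \subseteq [0,1]^n$, every coordinate $x_j$ lies in $\{0,1\}$, so $\vex \in \{0,1\}^n$ is an extreme point of the cube $[0,1]^n$; as $P$ is contained in the cube and $\vex \in P$, the point $\vex$ cannot be written as a proper convex combination of two distinct points of $P$ (such a combination would also be a proper convex combination inside $[0,1]^n$, contradicting that $\vex$ is a vertex of the cube). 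Hence $\vex$ is a vertex of $P$.

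Now let $\vev^{(1)} = \vey^{(0)}, \ldots, \vey^{(k)} = \vev^{(2)}$ be any circuit walk in $P$. Since $P$ is integral and its constraint matrix $\binom{A}{B}$ is totally unimodular, \Cref{thm:tu_circuits} applies and tells us that the walk is integral; that is, $\vey^{(i)} \in P \cap \Z^n$ for every $i = 0, \ldots, k$. By the observation of the previous paragraph, each such $\vey^{(i)}$ is then a vertex of $P$. By \Cref{def:circuit}, this means the circuit walk is a vertex walk. Since the walk was arbitrary, all circuit walks in $P$ are vertex walks, which is exactly the claim.

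There is essentially no obstacle here: the corollary is an immediate consequence of the already-established \Cref{thm:tu_circuits} together with the standard fact that integrality coincides with vertex-ness for $0/1$-polytopes. The only point requiring any care is making sure the ``integral point of a $0/1$-polytope is a vertex'' argument is stated cleanly — in particular noting that it uses the containment $P \subseteq [0,1]^n$ and not merely that the vertices of $P$ are $0/1$ — but this is routine and the rest of the proof is a one-line chaining of implications (TU $+$ integral $\Rightarrow$ integral walk $\Rightarrow$ vertex walk, via $0/1$).
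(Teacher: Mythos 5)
Your proof is correct and follows exactly the paper's argument: the paper also derives the corollary by combining \Cref{thm:tu_circuits} with the observation that every integral point of a $0/1$-polytope is a vertex. Your extra paragraph justifying that standard fact (via containment in $[0,1]^n$) is a fine addition but does not change the route.
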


One example of such a polytope is the \textit{bounded-size partition polytope} $PP(\kappa^{\pm})$, which is associated with the partitioning of a set of $n$ items into $k$ clusters $C_1,...,C_k$ where each cluster $C_i$ has an upper bound $\kappa_i^+$ and a lower bound $\kappa_i^-$ on its size. The vertices of $PP(\kappa^{\pm})$ correspond to all feasible clusterings. In \Cref{sec:bounded_size}, we characterize the edges and the circuits of this polytope. It then follows that although both circuit walks and edge walks in $PP(\kappa^{\pm})$ travel from vertex to vertex (and hence from clustering to clustering), the circuit walks exhibit a more general behavior.

On the other hand, the related \textit{fixed-size partition polytope} $PP(\kappa)$ is a TU 0/1-polytope associated with the partitioning of a set of $n$ items into $k$ clusters $C_1,...,C_k$ where the size of cluster $C_i$ is fixed at $\kappa_i$ \cite{b-13}. In \Cref{sec:fixedpp}, we show that, unlike the bounded-size partition polytope, all circuit walks in $PP(\kappa)$ are in fact edge walks.

The examples from this section together imply an additional result. Namely, as proven in \Cref{sec:proof_thm_examples}, there exist well-known polyhedra from combinatorial optimization in each level of the hierarchy from \Cref{sec:hierarchy}.

\begin{theorem}\label{thm:examples}
There exist specific examples of integral polyhedra from combinatorial optimization in each of the four distinct levels of the hierarchy based on circuit walk behavior. In particular:
\begin{enumerate}[label=\alph*)]
    \item There exist matroid polytopes which are GCW but not ICW.
    \item There exist transportation polytopes which are ICW but not VCW.
    \item There exist bounded-size partition polytopes which are VCW but not ECW.
    \item All fixed-size partition polytopes are ECW.
\end{enumerate}
\end{theorem}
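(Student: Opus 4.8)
The statement collects the examples analyzed in \Cref{sec:matroid,sec:tp,sec:bounded_size,sec:fixedpp} and combines them with the general containments of \Cref{thm:tu_circuits,cor:tu_0/1-polytopes}. For (a)--(c) it suffices to exhibit one polyhedron of the stated type and place it in the hierarchy; (d) is a universal statement and carries the real argument. For (a): a matroid polytope $P(f)$ is a $0/1$-polytope, so it is GCW but not ICW as soon as it carries a single non-integral circuit walk, and using the inequality representation of \cite{t-84} together with the description of $\mathcal C(A,B)$ developed in \Cref{sec:matroid} one takes the matroid, vertex and circuit produced there for which the maximal circuit step leaves the integer lattice. For (b): the constraint matrix of a transportation polytope with integral margins is the node--edge incidence matrix of a complete bipartite graph stacked on $-I$, hence totally unimodular, so \Cref{thm:tu_circuits} makes every such polytope ICW; and the polytope described in \Cref{sec:tp} shows that the VCW property can fail, by exhibiting --- via the cycle description of the circuits from \cite{bdfm-18} --- a maximal circuit step from a vertex that lands at an integral point whose support contains a cycle, hence at a non-vertex.

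For (c): in $PP(\kappa^{\pm})$ the constraint matrix is, up to negating some rows, the node--edge incidence matrix of the bipartite graph of items and clusters (together with $-I$), hence totally unimodular; since $PP(\kappa^{\pm})$ is a $0/1$-polytope, \Cref{cor:tu_0/1-polytopes} gives that all of its circuit walks are vertex walks. That it is not ECW follows from the characterization of the edges and the circuits of $PP(\kappa^{\pm})$ given in \Cref{sec:bounded_size}: there one finds a circuit that is not an edge direction which is nonetheless traversed maximally from one vertex to another, producing a length-one circuit walk that is a vertex walk but not an edge walk.

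For (d) I would proceed as follows. A reduction first: if every circuit that is feasible at a vertex of $PP(\kappa)$ is an edge direction there, then along any circuit walk an easy induction shows that every intermediate point is a vertex and every step is an edge, so all circuit walks are edge walks; hence it suffices to prove that local statement. Now $PP(\kappa)$ is a transportation-type polytope (all of its column sums equal $1$), so by the description of $\mathcal C(A,B)$ in \Cref{sec:fixedpp} its circuits are exactly the $\pm1$-incidence vectors of cycles in the bipartite graph of clusters and items. Fix a vertex $\vev$, i.e.\ a clustering, and a circuit $\veg$ feasible at $\vev$. Since $\vev$ is $0/1$, feasibility forces every coordinate on which $\veg$ equals $-1$ (a cycle edge inside the current support) to equal $1$ at $\vev$, and every coordinate on which $\veg$ equals $+1$ to equal $0$; hence $\vev+\alpha\veg\in PP(\kappa)$ precisely for $\alpha\in[0,1]$, the maximal step is $\alpha=1$, and it lands at the $0/1$-point $\vew=\vev+\veg$ --- the clustering obtained by cyclically reassigning the items along the cycle --- which is again a vertex. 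It remains to check that $[\vev,\vew]$ is an edge. For this I would pass to the minimal face of $PP(\kappa)$ containing the segment: freezing every coordinate on which $\vev$ and $\vew$ agree leaves exactly the coordinates of $\supp(\veg)$ free, and the surviving equalities say that at each node of the underlying (even) cycle the two incident free coordinates sum to $1$. This free region is the fractional perfect matching polytope of an even cycle, which is just the segment $[\vev,\vew]$; hence $[\vev,\vew]$ is a one-dimensional face, that is, an edge. (Equivalently, one may invoke the classical adjacency criterion for transportation polytopes --- two vertices are adjacent exactly when the symmetric difference of their supports is a single cycle, which here is $\supp(\veg)$.) Thus every feasible circuit direction at every vertex of $PP(\kappa)$ is an edge direction, and $PP(\kappa)$ is ECW.

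The main obstacle is part (d): identifying $\mathcal C(A,B)$ for $PP(\kappa)$ with cyclic reassignments of items among clusters, and then checking --- uniformly in the size vector $\kappa$ --- that each such reassignment realizes an \emph{edge} rather than merely a chord between two vertices. The latter is easy to misjudge once several items rotate among clusters of differing sizes, since the face one first writes down could a priori be higher-dimensional; the resolution is that freezing the common coordinates collapses it to the fractional matching polytope of an even cycle, which is a segment. Everything else is bookkeeping, given the circuit and edge characterizations from the earlier sections.
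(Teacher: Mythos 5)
Your proposal is correct, and for parts (a)--(c) it runs along exactly the lines of the paper: (a) uses the circuit with a component equal to $2$ constructed in \Cref{sec:matroid} to leave the integer lattice, (b) combines \Cref{thm:tu_circuits} with the cyclic-support example of \Cref{sec:tp}, and (c) combines \Cref{cor:tu_0/1-polytopes} with the gap between the edge and circuit characterizations of \Cref{lem:partition_polytope_edges,lem:partition_polytope_circuits} (like the paper, you leave the explicit non-edge circuit step implicit; a path-type circuit through a free interior cluster does the job). The only genuine divergence is in part (d). The paper obtains the circuits of $PP(\kappa)$ by intersecting the circuit description of $PP(\kappa^{\pm})$ from \Cref{lem:partition_polytope_circuits} with the extra row equalities, and then concludes adjacency of $\vev$ and $\vev+\veg$ by citing the known characterization from \cite{b-13} that two clusterings are adjacent in $PP(\kappa)$ iff their clustering difference graph is a single directed cycle. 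You instead view $PP(\kappa)$ as a transportation polytope to identify its circuits with cycles of $K_{k,n}$, and you prove adjacency from first principles: fixing the inequalities tight at both endpoints collapses the containing face to the fractional perfect matching polytope of an even cycle, which is the segment $[\vev,\vev+\veg]$. Your route is self-contained and does not rely on the external adjacency criterion, at the cost of the explicit face computation; the paper's route is shorter but imports the adjacency result of \cite{b-13}. Both correctly isolate the two essential facts --- that the maximal step length is $1$ and that the terminal point is an adjacent vertex --- so the proposal is a valid proof.
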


\begin{figure}[b]
\begin{center}
\includegraphics[width=.95\textwidth]{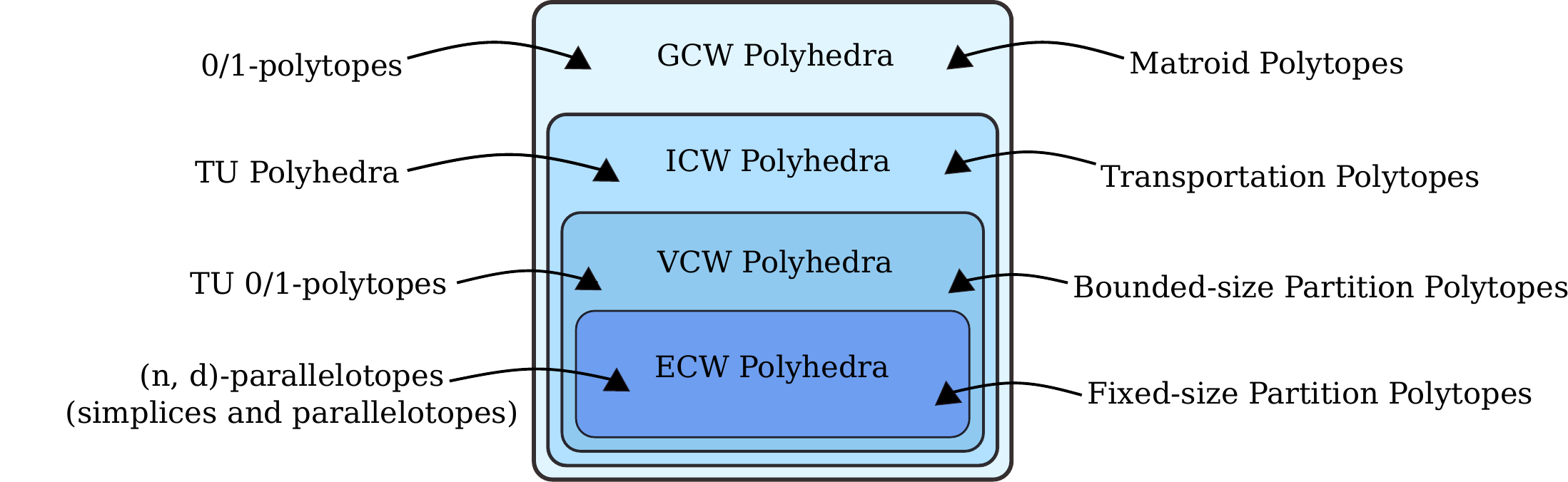}
\end{center}
\caption{The results of \Cref{sec:specialpolyhedra,sec:edges=circuits} in terms of the hierarchy from \Cref{sec:hierarchy}. The left-hand side shows where well-known families of polyhedra belong in the hierarchy, and the right-hand side gives specific examples from combinatorial optimization for each level.}\label{fig:hierarchy_results}
\end{figure}

A visualization of the results from this section (along with the results from the upcoming \Cref{sec:edges=circuits}) in terms of the hierarchy is given in \Cref{fig:hierarchy_results}.

\subsection{ECW Polyhedra} \label{sec:edges=circuits}

In this section we characterize the simple polytopes at the bottom level of the hierarchy in which all circuit walks are edge walks. Our characterizations are quite restrictive and have strong implications on the degenerate polytopes in the category as well. But surprisingly, non-trivial examples of (degenerate) polyhedra from this category do appear in practice. We provide an example -- the \textit{fixed-size partition polytope} -- at the end of the section. 

An $n$-dimensional polyhedron is said to be \textit{simple} -- or \textit{non-degenerate} -- if each vertex belongs to exactly $n$ facets. On the other hand, a \textit{degenerate} polyhedron contains a vertex belonging to more than $n$ facets. Simple polyhedra are of interest in the study of diameters as it suffices to only consider this class of polyhedra to bound the combinatorial diameter of any $n$-dimensional polyhedron with a fixed number of facets \cite{kw-67}. While much harder to prove, the same holds for circuit diameters \cite{bsy-18}. 

The structure of simple polyhedra offers several useful characterizations of ECW polytopes. Our main result is that the only simple polytopes exhibiting this behavior are intimately related to the highly-structured simplex and parallelotope. 

Our first important tool is what we call \textit{elementary cones}. Consider a full-dimensional polyhedron $P = \{ \vex \in \R^n \colon B \vex \leq \ved \}$ and the hyperplane arrangement in $\R^n$ consisting of the hyperplanes $B_i \vex = 0$ for each row $B_i$ of $B$. These hyperplanes each contain the origin and partition $\R^n$ into $n$-dimensional polyhedral cones with disjoint interiors. We call this arrangement of hyperplanes the {\em elementary arrangement} of $P$ and refer to the inclusion-minimal $n$-dimensional cones in the arrangement as the \textit{elementary cones} of $P$. 

It is not difficult to see that elementary cones are generated by circuits (\Cref{lem:cone_circuits}). We use this fact to give a first characterization of simple ECW polytopes: the inner cones of all vertices must be elementary cones. Recall that given a vertex $\vev$ of a polyhedron $P$, the \textit{inner cone} $I(\vev)$ of $\vev$ is the cone consisting of \textit{all} feasible directions at $\vev$ with respect to $P$.

\begin{theorem}[Elementary Cone Condition]\label{thm:inner_and_elem_cones}
Let $P = \{ \vex \in \R^n \colon B \vex \leq \ved \}$ be a full-dimensional, simple polytope. All circuit walks in $P$ are edge walks if and only if for each vertex $\vev \in P$, the inner cone $I(\vev)$ is an elementary cone of $P$.
\end{theorem}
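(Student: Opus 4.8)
The plan is to establish both directions by relating the geometry of circuit walks at a vertex to the combinatorial structure of the inner cone $I(\vev)$. For a simple polytope, the inner cone $I(\vev)$ at a vertex $\vev$ is a simplicial cone generated by the $n$ edge directions emanating from $\vev$, and it is always a union of elementary cones of $P$ (since the elementary arrangement refines the normal fan structure). So the real content is: $I(\vev)$ is a \emph{single} elementary cone exactly when there is no circuit direction $\veg$ feasible at $\vev$ other than (positive multiples of) the $n$ edge directions. I would phrase the whole argument around this equivalence.

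For the backward direction, assume every inner cone $I(\vev)$ is an elementary cone. Let $\vey^{(0)}, \ldots, \vey^{(k)}$ be an arbitrary circuit walk. I would argue by induction on the step index that each $\vey^{(i)}$ is a vertex and each step travels along an edge. At step $i$, suppose $\vey^{(i)}$ is a vertex $\vev$; the circuit direction $\veg^{(i)}$ used must be a feasible direction at $\vev$, hence lies in $I(\vev)$. Since $I(\vev)$ is an elementary cone and, by \Cref{lem:cone_circuits}, an elementary cone is generated by circuits — and in the simple case it is a simplicial cone whose extreme rays are precisely the $n$ edge directions at $\vev$ — any circuit lying inside $I(\vev)$ must be a positive multiple of one of these extreme-ray generators. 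Here I would need the small fact that the only circuits contained in a simplicial elementary cone are its generating rays; this follows because a circuit strictly in the interior (or on a lower-dimensional face that is itself a union of elementary cones of lower dimension) would violate support-minimality of $B\veg$ against the generators of that face. Thus $\veg^{(i)}$ is an edge direction at $\vev$, and taking the maximal feasible step along an edge of a polytope lands at the adjacent vertex, so $\vey^{(i+1)}$ is again a vertex; the induction closes.

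For the forward direction, assume all circuit walks in $P$ are edge walks and suppose for contradiction that some inner cone $I(\vev)$ is \emph{not} a single elementary cone. Then the elementary arrangement cuts $I(\vev)$ into at least two full-dimensional pieces, so there is an elementary cone $C \subseteq I(\vev)$ whose relative interior meets the interior of $I(\vev)$ but $C$ is not all of $I(\vev)$; in particular $C$ has a generating ray $\veg$ (a circuit, by \Cref{lem:cone_circuits}) that points into the interior of $I(\vev)$ and is therefore \emph{not} an edge direction at $\vev$. Starting at $\vev$ and moving along $\veg$ gives a feasible half-line inside $P$; extending it maximally produces a one-step circuit walk $\vev = \vey^{(0)} \to \vey^{(1)}$ that does not follow an edge of $P$ (and if $\vey^{(1)}$ is not a vertex one can continue to a full circuit walk), contradicting the hypothesis. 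I expect the \textbf{main obstacle} to be the forward direction's bookkeeping: one must be careful that a circuit ray of a sub-cone of $I(\vev)$ genuinely yields a \emph{non-edge} maximal circuit step rather than accidentally coinciding with an edge or exiting $P$ immediately, and one must correctly handle the maximal step length so that the resulting walk is legitimate per \Cref{def:circuit}. This is where the simplicity of $P$ (which pins $I(\vev)$ down as a simplicial cone with exactly $n$ extreme rays, all edges) is used essentially, together with \Cref{lem:cone_circuits} to guarantee the offending direction is a bona fide circuit.
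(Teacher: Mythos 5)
Your proposal is correct and follows essentially the same route as the paper: both directions reduce, via \Cref{lem:cone_circuits}, to the fact that for a simple vertex $\vev$ the inner cone $I(\vev)$ is elementary exactly when the only circuits it contains are the $n$ edge directions at $\vev$. The one spot to tighten is your justification of that ``small fact'': it does not follow from support-minimality of $B\veg$ against the face generators, but from \Cref{lem:cone_circuits} itself --- a non-edge circuit in $I(\vev)$ lies on $n-1$ hyperplanes of the elementary arrangement, at least one of which must then cut the interior of $I(\vev)$, which is precisely how the paper argues that direction.
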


Next, we prove that all polytopes satisfying this \textit{elementary cone condition} are highly symmetric: the inner cones of vertices that do not share a facet are opposites of each other. By imposing that this property transfers to vertices belonging to a common face (and stating it with respect to the affine hull of the face), we obtain a second characterization. Given a pair of vertices $\veu, \vev$ of a polyhedron $P$, we let $P^{uv}$ denote the inclusion-minimal face of $P$ containing $\veu$ and $\vev$ and let $I^{uv}(\veu), I^{uv}(\vev)$ denote the inner cones of $\veu, \vev$ with respect to $P^{uv}$. 

\begin{theorem}[Symmetric Inner Cone Condition]\label{thm:opposite_inner_cones}
Let $P$ be a simple polytope given by a minimal representation. All circuit walks in $P$ are edge walks if and only if $I^{uv}(\veu) =  -I^{uv}(\vev)$ for all pairs of vertices $\veu, \vev$ in $P$.
\end{theorem}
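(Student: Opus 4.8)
The plan is to deduce \Cref{thm:opposite_inner_cones} from \Cref{thm:inner_and_elem_cones} by a careful bootstrapping argument that passes to faces. The forward direction requires two ingredients: first, that the elementary cone condition forces the global symmetry $I^{uv}(\veu) = -I^{uv}(\vev)$ when $\veu,\vev$ share \emph{no} common facet (this is the ``opposite inner cones'' phenomenon already alluded to in the text before the theorem statement), and second, that this symmetry propagates to pairs of vertices lying on a common face by applying the same reasoning inside the face $P^{uv}$, which is itself a simple polytope all of whose circuit walks are edge walks. The backward direction is the easier one and should follow by specializing the hypothesis to $\veu = \vev$, or to pairs on an edge, to recover a condition strong enough to invoke \Cref{thm:inner_and_elem_cones}.

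First I would set up the forward direction. Assume all circuit walks in $P$ are edge walks; by \Cref{thm:inner_and_elem_cones} every inner cone $I(\vev)$ is an elementary cone of $P$. An elementary cone is an inclusion-minimal full-dimensional cell of the central hyperplane arrangement $\{B_i\vex = 0\}$; such a cell is simplicial and is cut out by exactly $n$ of the hyperplanes, with the two ``sides'' of each such hyperplane distinguishing it from its neighbors. The key structural observation is that since $P$ is simple, a vertex $\vev$ lies on exactly $n$ facets $B_{i}\vex = d_{i}$ for $i$ in an index set $S(\vev)$ of size $n$, and $I(\vev) = \{\vew : B_i \vew \le 0, \ i \in S(\vev)\}$; the elementary cone condition says precisely that the rows $\{B_i : i \in S(\vev)\}$ form a basis, i.e.\ $\vev$ is a nondegenerate vertex in the strong sense that its tight constraints are the full description of its inner cone as an elementary cone. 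Now if $\veu, \vev$ are two vertices with $S(\veu) \cap S(\vev) = \emptyset$ (no common facet), I would argue that the elementary cones $I(\veu)$ and $I(\vev)$ must be antipodal: walk a circuit from $\veu$ toward $\vev$; since the walk is an edge walk and $P$ is a polytope, it terminates; tracking which circuits can be used at each elementary cone and using that the arrangement is centrally symmetric, one shows the chain of elementary cones passed through forces $I(\vev) = -I(\veu)$. This is the heart of the matter and I expect it to be the main obstacle — making precise why ``all circuit walks are edge walks'' rigidly pins down the whole arrangement of inner cones to be the closure of a single centrally-symmetric fan, rather than just controlling each vertex locally.

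Next I would handle the passage to faces. Given arbitrary vertices $\veu,\vev$, let $F = P^{uv}$ be the smallest face containing both. Restricting attention to $F$: $F$ is a simple polytope (faces of simple polytopes are simple), and every circuit walk in $F$ is in particular a circuit walk in $P$ hence an edge walk — so $F$ again satisfies the hypotheses of \Cref{thm:inner_and_elem_cones}, now within $\mathrm{aff}(F)$. By minimality of $F$, the vertices $\veu$ and $\vev$ share no facet \emph{of} $F$, so the case already handled (applied to $F$ in place of $P$) yields $I^{uv}(\veu) = -I^{uv}(\vev)$, which is exactly the claim. For the converse, suppose $I^{uv}(\veu) = -I^{uv}(\vev)$ for all pairs. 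Taking $\vev = \veu$ gives $I(\veu) = -I(\veu)$ restricted to the appropriate face — more usefully, taking $\veu,\vev$ to be the two endpoints of an edge $e$ forces $I^e(\veu) = -I^e(\vev)$, i.e.\ the edge is ``straight'' in the sense that the two vertices see each other along the same line; iterating and combining the conditions across the edges at a single vertex should force each inner cone $I(\vev)$ to be simplicial with generators that close up into a central arrangement, i.e.\ an elementary cone, after which \Cref{thm:inner_and_elem_cones} finishes the job. The main obstacle in the converse is extracting from the family of face-wise antipodality relations the single global statement ``$I(\vev)$ is an elementary cone'' — I would do this by showing the generators of $I(\vev)$ must be circuits (their opposites appear as generators of a neighboring vertex's inner cone, and generators of inner cones at adjacent vertices are edge directions hence circuits) and that there are exactly $n$ of them spanning $\R^n$, which together with \Cref{lem:cone_circuits} identifies $I(\vev)$ with an elementary cone.
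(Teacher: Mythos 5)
Your forward direction has the right architecture --- invoke \Cref{thm:inner_and_elem_cones}, treat facet-disjoint pairs, and pass to the minimal face $P^{uv}$ --- but the step you yourself flag as ``the heart of the matter'' is left unproven, and the chain-of-elementary-cones mechanism you sketch is both undeveloped and unnecessary. The paper's argument at this point is essentially one line: since $P^{uv}$ is the inclusion-minimal face containing $\veu$ and $\vev$, the direction $\vev-\veu$ lies in the strict interior of $I^{uv}(\veu)$ and $\veu-\vev$ lies in the strict interior of $I^{uv}(\vev)$, so the interiors of $I^{uv}(\veu)$ and $-I^{uv}(\vev)$ meet; both are elementary cones of $P^{uv}$ (the restrictions of the elementary cones $I(\veu)$, $I(\vev)$ to $\mathrm{aff}(P^{uv})$), the opposite of an elementary cone is again an elementary cone, and two full-dimensional cells of a hyperplane arrangement with intersecting interiors coincide. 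No walk or ``chain of cones'' is needed, and it is not clear your tracking argument could be completed without rediscovering exactly this interior-point observation.

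The converse as you propose it does not work. You claim that exhibiting $I(\vev)$ as a cone generated by exactly $n$ circuits spanning $\R^n$ identifies it, via \Cref{lem:cone_circuits}, as an elementary cone. But the inner cone of \emph{every} vertex of a simple polytope is generated by its $n$ edge directions, which are always circuits --- this holds whether or not $P$ is ECW, and is precisely the situation in the proof of \Cref{thm:inner_and_elem_cones}, where such a cone may strictly contain an elementary cone. What must be excluded is that some hyperplane $B_i\vex=\ve0$ of the elementary arrangement cuts through the interior of $I(\vev)$, and the face-wise antipodality relations do not deliver this by a local argument. The paper's converse is correspondingly much heavier: it first classifies the simple polytopes satisfying the symmetric inner cone condition as $(n,d)$-parallelotopes (\Cref{lem:parallelotope,lem:n-k_parallelotope}) and then proves separately (\Cref{lem:edge_walks}) that in an $(n,d)$-parallelotope no facet hyperplane can split an inner cone. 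Note also that your proposed warm-ups are vacuous: $I^{uu}(\veu)=\{\ve0\}$, and $I^{e}(\veu)=-I^{e}(\vev)$ holds for the endpoints of any edge of any polytope, so the content of the hypothesis lies entirely in non-adjacent pairs.
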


Naturally, the \textit{symmetric inner cone condition} of \Cref{thm:opposite_inner_cones} is only satisfied by quite symmetric polyhedra. One such example is the \textit{parallelotope} of dimension $n$, also called the \textit{$n$-parallelotope}: a vertex-transitive polytope with $n$ pairs of parallel, opposite facets and $2^n$ vertices. Equivalently, an $n$-parallelotope is a zonotope generated by a set of $n$ linearly independent vectors in general position, which correspond to its edge directions. We show that the only other polytopes satisfying the symmetric inner cone condition constitute a generalization of the $n$-parallelotope which we call  the \textit{$(n,d)$-parallelotope}.

\begin{definition}[$(n,d)$-parallelotope]\label{def:n_d-parallelotope}
Given $d \in \{1,...,n\}$, an \textit{$(n,d)$-parallelotope} is an $n$-dimensional polytope with $n+d$ facets which satisfies the symmetric inner cone condition and in which each vertex belongs to a $d$-parallelotope face.
\end{definition}

Note that an $(n,n)$-parallelotope is simply an $n$-parallelotope and an $(n, 1)$-parallelotope is an $n$-simplex. Other instances of $(n,d)$-parallelotopes, such as the $(3,2)$-parallelotope pictured in \Cref{fig:polytope_example}, are highly symmetric hybrids of the simplex and the parallelotope. We show that all $(n,d)$-parallelotopes are ECW (\Cref{lem:edge_walks}), which yields a final characterization.
\begin{theorem}\label{thm:non-degerate_edges=circuits}
Let $P$ be an $n$-dimensional, simple polytope given by a minimal representation. All circuit walks in $P$ are edge walks if and only if $P$ is an $(n,d)$-parallelotope.
\end{theorem}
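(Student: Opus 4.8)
The plan is to combine the two earlier characterizations with a structural analysis of polytopes satisfying the symmetric inner cone condition. The backward direction should be essentially immediate from \Cref{lem:edge_walks}, which is asserted to say that all $(n,d)$-parallelotopes are ECW; so the substance is the forward direction. By \Cref{thm:opposite_inner_cones}, if all circuit walks in $P$ are edge walks then $I^{uv}(\veu) = -I^{uv}(\vev)$ for every pair of vertices $\veu,\vev$. So it suffices to show that a simple $n$-polytope (given by a minimal representation) satisfying the symmetric inner cone condition is an $(n,d)$-parallelotope for the appropriate $d$. Since $P$ is simple, each vertex lies in exactly $n$ facets; let $f$ denote the total number of facets, so we must show $f = n + d$ where $d \in \{1,\dots,n\}$ and each vertex lies in a $d$-parallelotope face.

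First I would fix a vertex $\vev$ and examine its $n$ incident facets $F_1,\dots,F_n$; the $n$ edge directions at $\vev$ are $\veg_1,\dots,\veg_n$ (a basis of $\R^n$ since $P$ is simple), where $\veg_j$ lies in all the $F_i$ with $i \neq j$. The symmetric inner cone condition applied to $\vev$ and each neighbor $\vev + \alpha_j \veg_j$ forces, face by face, a parallelism structure: walking along $\veg_j$ we must return to a vertex whose inner cone (within the minimal face spanned) is $-I(\vev)$, which pins down the edge directions at the neighbor and, iterating, propagates a global consistency on edge directions. The key combinatorial claim to extract is that the set of edge directions of $P$ splits: some directions $\ve g$ occur at a vertex together with their negative being realized as the direction back, giving genuine ``parallel-class'' pairs of facets (contributing a parallelotope-like structure), while the remaining facets behave simplex-like (all meeting pairwise). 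Concretely I would argue that the facet-vertex incidence structure, constrained by $I^{uv}(\veu) = -I^{uv}(\vev)$ on every minimal face $P^{uv}$, forces $P$ to be combinatorially (indeed affinely, up to the symmetry) a product-like object: a $d$-parallelotope ``fattened'' so that $d$ of the coordinate directions come in antipodal pairs of facets while the remaining $n - d + \dots$ bookkeeping gives exactly $n+d$ facets. I would then verify that each vertex lies in a $d$-dimensional face that is itself a parallelotope, by restricting the symmetric inner cone condition to the face spanned by the ``paired'' edge directions at that vertex and invoking the characterization of parallelotopes (a simple polytope all of whose vertices have inner cones that are pairwise opposite and whose facets come in $d$ parallel pairs is a $d$-parallelotope). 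Finally, count facets: the $d$ parallel pairs plus the simplex-type facets total $n + d$, and $d$ ranges over $\{1,\dots,n\}$ with the extremes recovering the $n$-simplex and $n$-parallelotope as noted after \Cref{def:n_d-parallelotope}.

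The main obstacle I anticipate is the ``splitting'' step: showing that the symmetric inner cone condition cannot be satisfied by some exotic simple polytope that is neither a parallelotope nor a simplex nor a clean hybrid — i.e., ruling out mixed incidence patterns where a facet is ``parallel-paired'' relative to some vertices but ``simplex-like'' relative to others. Handling this cleanly will likely require a careful induction on dimension, passing to facets (which are themselves simple and inherit a symmetric inner cone condition on their own minimal representations) and using that a facet of an $(n,d)$-parallelotope is an $(n-1,d)$- or $(n-1,d-1)$-parallelotope; the base case $n = 1, 2$ is trivial, and the inductive step must reconcile the local data at a vertex (the pairing pattern among its $n$ edge directions) with the global facet count. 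A secondary technical point is making sure the minimal-representation hypothesis is used correctly so that ``number of facets'' is well defined and the parallel-pair count is unambiguous; degenerate coincidences of hyperplanes are excluded precisely by this hypothesis together with simplicity.
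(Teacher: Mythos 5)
Your overall skeleton matches the paper's: the backward direction is \Cref{lem:edge_walks}, and the forward direction reduces, via the elementary and symmetric inner cone conditions, to showing that a simple $n$-polytope satisfying $I^{uv}(\veu) = -I^{uv}(\vev)$ for all vertex pairs must be an $(n,d)$-parallelotope. But that reduction is exactly where the substance of the theorem lies, and your proposal does not prove it. You describe a ``splitting'' of edge directions into parallel-paired and simplex-like classes and a putative induction on dimension through facets, yet you explicitly flag the key step --- ruling out ``exotic'' simple polytopes with mixed incidence patterns --- as an anticipated obstacle rather than resolving it. The facet bookkeeping is left unfinished (your ``$n-d+\dots$''), the claim that a facet of an $(n,d)$-parallelotope is an $(n-1,d)$- or $(n-1,d-1)$-parallelotope is asserted without proof, and even granting it, the inductive step from ``every facet is such a parallelotope'' to the global conclusion (in particular the count of exactly $n+d$ facets) is the hard part and is missing.

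The paper closes this gap with a different, concrete strategy (\Cref{lem:parallelotope,lem:n-k_parallelotope}). First, if some pair of vertices shares no facets, the opposite inner cones force $P$ to be an $n$-parallelotope: any facet outside the $2n$ facets incident to that pair would cut off a vertex of the candidate parallelotope $Q$, and one then exhibits a two-dimensional face that fails the symmetric inner cone condition. In the general case one takes a pair $\veu,\vev$ sharing the \emph{minimum} number $k$ of facets, observes that $P^{uv}$ is an $(n-k)$-parallelotope by the first lemma, shows that no facet outside the resulting collection of $2n-k$ facets can meet an edge leaving $P^{uv}$ (otherwise some pair of vertices would share only $k-1$ facets, contradicting minimality of $k$), and then propagates by induction on combinatorial distance from $P^{uv}$ rather than on dimension. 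To salvage your dimension-induction route you would need an argument of comparable strength for the gluing and counting step; as written, the proposal is incomplete at precisely the point that carries the weight of the theorem.
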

\noindent The proofs of \Cref{thm:inner_and_elem_cones,thm:opposite_inner_cones,thm:non-degerate_edges=circuits} and the related \Cref{lem:cone_circuits,lem:opposite_inner_cones,lem:parallelotope,lem:n-k_parallelotope,lem:edge_walks} in \Cref{sec:proof_thms_ecw} provide further insight into the structure of $(n,d)$-parallelotopes.

\begin{figure}[t]
\begin{center}
\begin{tikzpicture}
[scale=2.2, vertices/.style={draw, fill=black, circle, inner sep=0.5pt}]

\useasboundingbox (-0.3,-.35) rectangle (0.8,0.8) ;

\filldraw[fill=black!05, draw=black] (0.8,0)--(.1,0.8)--(-0.3,0)--(0.2,-0.4)--cycle;

\node[vertices] (a) at (0.8,0) {};
\node[vertices] (b) at (.1,0.8) {};
\node[vertices] (c) at (-0.3,0) {};
\node[vertices] (d) at (.2,-.4) {};

\foreach \to/\from in {a/b,a/d,b/c,b/d,c/d}
	\draw [-] (\to)--(\from);

\foreach \to/\from in {a/c}
	\draw [opacity=0.5, dashed] (\to)--(\from);
\end{tikzpicture}
\qquad
\begin{tikzpicture}
[scale=2.0, vertices/.style={draw, fill=black, circle, inner sep=0.5pt}]

\filldraw[fill=black!05, draw=black] (0,0)--(1,0)--(1.4,0.2)--(1.07,1.2)--(.67,1)--cycle;

\node[vertices] (a) at (0,0) {};
\node[vertices] (b) at (1,0) {};
\node[vertices] (c) at (1.4,0.2) {};
\node[vertices] (d) at (1.07,1.2) {};
\node[vertices] (e) at (.67,1) {};
\node[vertices] (f) at (.4,.2) {};

\foreach \to/\from in {a/b,b/c,c/d,d/e,e/a,b/e}
	\draw [-] (\to)--(\from);

\foreach \to/\from in {f/a,f/c,f/d}
	\draw [opacity=0.5, dashed] (\to)--(\from);

\end{tikzpicture}
\qquad
\begin{tikzpicture}
[scale=2.6, vertices/.style={draw, fill=black, circle, inner sep=0.5pt}]
\useasboundingbox (0,-.086) rectangle (1.35,0.92) ;
\filldraw[fill=black!05, draw=black] (0,0)--(1,0)--(1.25,.25)--(1.35,.75)--(.35,.75)--(.1,.5)--cycle;

\node[vertices] (a) at (0,0) {};
\node[vertices] (b) at (1,0) {};
\node[vertices] (c) at (.25,.25) {};
\node[vertices] (d) at (1.25,.25) {};
\node[vertices] (e) at (.1,.5) {};
\node[vertices] (f) at (1.1,.5) {};
\node[vertices] (g) at (.35,.75) {};
\node[vertices] (h) at (1.35,.75) {};

\foreach \to/\from in {a/b,b/d,d/h,h/g,g/e,e/a,e/f,f/h,f/b}
	\draw [-] (\to)--(\from);

\foreach \to/\from in {a/c,c/d,c/g}
	\draw [opacity=0.5, dashed] (\to)--(\from);
\end{tikzpicture}
\caption{Examples of $(n,d)$-parallelotopes in $\R^3$. From left to right: The $(3,1)$-parallelotope (the 3-simplex), the $(3,2)$-parallelotope, and the $(3,3)$-parallelotope (the 3-parallelotope).
The $(3,2)$-parallelotope is the only simple ECW polytope in $\R^3$ that is not a simplex or parallelotope.}\label{fig:polytope_example}
\end{center}
\end{figure}
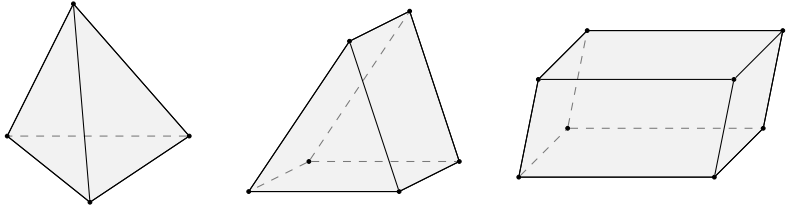

We comment on the two restrictions in our characterization: we only treat bounded and simple polytopes. Since circuit walks are defined to start and terminate at a vertex, a circuit walk in an unbounded polyhedron will never use a circuit direction from the recession cone. However, the facets of the recession cone may still intersect with facets from the ``bounded part" of the polyhedron to introduce new circuits to be used in a circuit walk.

Additionally, the analysis of the inner cones in a degenerate ECW polyhedron becomes significantly more challenging. Opposite inner cones need not completely mirror each other as in \Cref{thm:opposite_inner_cones} -- instead the edge directions of one inner cone could be a superset or subset of the directions of its opposite. Some of our results readily transfer to degenerate polytopes -- for example, all two-dimensional faces of (degenerate) ECW polytopes must be either triangles or parallelograms. Despite this restrictive property, such polytopes do appear in practice. As stated in \Cref{thm:examples}, one example is the (highly) degenerate fixed-size partition polytope (\Cref{sec:fixedpp}). 

An extension of our characterization of simple ECW polytopes to all ECW polyhedra is a natural but challenging next step for the presented line of research.

\section{Proofs for \Cref{thm:hierarchy,thm:reversible} }\label{sec:proofsforthms}

\subsection{Proof of \Cref{thm:hierarchy}}\label{sec:proof_thm_hierarchy}

We prove \Cref{thm:hierarchy}, which states that the levels in the hierarchy for integral polyhedra based on circuit walk behavior -- as depicted in \Cref{fig:hierarchy_reversible} -- are distinct. Although the result also follows from the more detailed \Cref{thm:examples}, we provide a short proof here using low-dimensional examples.

\begin{proof}[of \Cref{thm:hierarchy}]
Recall the two-dimensional integral polyhedra from \Cref{fig:examples}, and assume a minimal algebraic representation for each polyhedron. This implies that we can characterize the set of circuits for each polyhedron based on its geometric properties; in particular, for two-dimensional polyhedra, all circuits appear as edge directions. It follows that the set of circuits for the examples in \Cref{fig:example_a,fig:example_b,fig:example_c} is $\{\pm(1, 0), \pm(0, 1), \pm(1, 1), \pm(1, -1) \}$. 

Consider first \Cref{fig:example_a}. Clearly, taking a maximal step in the circuit direction $(1, -1)$ from the point $\vey^{(0)}$ leads to the non-integral point $\vey^{(1)}$. Therefore, the polyhedron is GCW but not ICW. 

In \Cref{fig:example_b}, it is not difficult to observe via complete enumeration that, starting at any vertex in the polyhedron and taking maximal steps along circuit directions, all reachable points are indeed integral. However, some of these points -- such as $\vey^{(1)}$, $\vey^{(2)}$, and $\vey^{(4)}$ -- need not be vertices. Hence the polyhedron is ICW but not VCW. 

Similarly, in \Cref{fig:example_c}, only vertices are reachable from any starting vertex via maximal circuit steps. However, some of these steps -- such as that from $\vey^{(0)}$ to $\vey^{(1)}$ -- need not travel along edges. Thus, the polyhedron is VCW but not ECW. 

Finally, assuming a minimal representation, the only circuits of the polyhedron in \Cref{fig:example_d} are $\{ \pm (1, 0) \pm (0, 1)\}$. It is then clear that, starting at any vertex, the only feasible circuit steps must travel along edges. Hence, the polyhedron is ECW. 
\eoproof  \end{proof}

\subsection{Proof of \Cref{thm:reversible}}\label{sec:reversible}

We prove \Cref{thm:reversible}, which states that all circuit walks in a polyhedron are reversible if and only if all of its circuit walks are vertex walks.
\begin{proof}[of \Cref{thm:reversible}]
First, let $P$ be a polyhedron in which all circuit walks are vertex walks and suppose there exists a circuit walk $\vev^{(1)} = \vey^{(0)}, \vey^{(1)},...,\vey^{(k)} = \vev^{(2)}$ in $P$ that is not reversible. Then the reversed walk $\vey^{(k)},...,\vey^{(1)}$ must not be maximal. In particular, let $i$ denote an index such that the step from $\vey{(i)}$ to $\vey^{(i-1)}$ is not maximal. Thus, taking a maximal step from $\vey^{(i)}$ along the direction $\vey^{(i-1)} - \vey^{(i)}$ leads to some point $\vez \neq \vey^{(i-1)}$ in $P$. However, this implies that $\vey^{(i-1)}$ can be expressed as a convex combination of $\vez$ and $\vey^{(i)}$, contradicting the fact that $\vey^{(i-1)}$ is a vertex of $P$.

Conversely, let $P$ be a polyhedron which contains a circuit walk $\vev^{(1)} = \vey^{(0)}, \vey^{(1)},...,\vey^{(k)} = \vev^{(2)}$ that is not a vertex walk. Then some step $\vey^{(i)}$ of the walk belongs to the strict interior of a face $F$ of $P$ with dimension greater than zero. Since $P$ is pointed, there exists a vertex $\veu$ in $F$. Furthermore, there exists a walk $\vey^{(i)}=\vez^{(0)},\vez^{(1)},...,\vez^{(t)}=\veu$ in $F$ which uses only edge directions of $F$ and takes maximal steps. Hence, $\vev^{(1)} = \vey^{(0)},...,\vey^{(i)}=\vez^{(0)},\vez^{(1)},...,\vez^{(t)}=\veu$ is a circuit walk in $P$. This circuit walk is not reversible since $\vey^{(i)}$ belongs to the strict interior of $F$ and the circuit direction immediately following $\vey^{(i)}$ on the walk is an edge direction of $F$.
\eoproof  \end{proof}

\section{Proof of \Cref{thm:tu_circuits}}\label{sec:proofofthm1}

The fact that TU polyhedra are ICW is not surprising since such a polyhedron has integral vertices for any integral right-hand side. We present a complete proof for a general integral polyhedron of the form $P = \{ \vex \in \R^n \colon A \vex = \veb, B \vex \leq \ved \}$. We note that this result can be obtained in various ways -- in particular, it can be achieved by solving a linear program over a certain one-dimensional TU polyhedron. However, the upcoming \Cref{lem:circuit_one-dim,lem:circuit_determinant} follow from basic linear algebra and have additional useful implications \cite{bv-18}. We also refer the reader to the intimately related and well-presented results surrounding Proposition 3.3 in \cite{o-10}, which apply to polyhedra in standard form.

We begin with an important property of circuits which is used in \cite{bv-18} and can be derived from Proposition~1 in \cite{kps-17}. A short proof is included for the sake of completeness.

\begin{lemma}[\cite{bv-18,kps-17}]\label{lem:circuit_one-dim}
Let $P = \{ \vex \in \R^n \colon A \vex = \veb, B \vex \leq \ved \}$ be a pointed polyhedron, let $\veg \in \ker(A)$ with coprime integer components be given, and let $B'$ denote the maximal row-submatrix of $B$ such that $B' \veg = \ve0$. Then $\veg$ is a circuit of $P$ if and only if $\rank \binom{A}{B'} = n-1$.
\end{lemma}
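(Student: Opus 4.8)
The plan is to recast both conditions in the lemma as statements about the subspace $W := \ker\binom{A}{B'}$, which contains the given vector $\veg$. Set $S := \supp(B\veg)$; then the rows of $B'$ are exactly the rows $B_i$ of $B$ with $i \notin S$, so for every $\vex \in \R^n$ one has $B'\vex = \ve 0$ if and only if $\supp(B\vex) \subseteq S$. Since $\veg \ne \ve 0$ lies in $W$ we have $\dim W \ge 1$, and by the rank--nullity theorem $\rank\binom{A}{B'} = n-1$ is equivalent to $\dim W = 1$, i.e.\ to $W = \R\veg$. So the lemma reduces to showing that $\veg$ is a circuit if and only if $W = \R\veg$.

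For the direction ``$W = \R\veg$ implies $\veg$ is a circuit'', I would take an arbitrary $\vex \in \ker(A)\setminus\{\ve 0\}$ with $\supp(B\vex) \subseteq S$, note that this means $B'\vex = \ve 0$, hence $\vex \in W = \R\veg$, hence $\vex$ is a nonzero multiple of $\veg$ and $\supp(B\vex) = S$. Thus no element of $\ker(A)\setminus\{\ve 0\}$ has support strictly contained in $S$, so $B\veg$ is support-minimal; as $\veg$ already has coprime integer components, it is a circuit.

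For the converse I would argue the contrapositive: assuming $\dim W \ge 2$, I would produce a vector certifying that $B\veg$ is not support-minimal. Here pointedness of $P$ enters: it forces $\rank\binom{A}{B} = n$, so the lineality space $\ker\binom{A}{B}$ is trivial and $\veg \in \ker(A)\setminus\{\ve 0\}$ cannot satisfy $B\veg = \ve 0$; that is, $S \ne \emptyset$. Choose $\vew \in W$ linearly independent from $\veg$; then $\vew \in \ker(A)\setminus\{\ve 0\}$ and $\supp(B\vew) \subseteq S$. If this inclusion is strict, $\vew$ is the desired certificate. Otherwise $\supp(B\vew) = S$, so for any fixed $i_0 \in S$ both $B_{i_0}\veg$ and $B_{i_0}\vew$ are nonzero, and the vector $\vex := \veg - \frac{B_{i_0}\veg}{B_{i_0}\vew}\vew$ lies in $\ker(A)$, is nonzero by linear independence, satisfies $B'\vex = \ve 0$ so $\supp(B\vex) \subseteq S$, and has $B_{i_0}\vex = 0$ so that $\supp(B\vex) \subsetneq S$. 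Either way $\veg$ fails support-minimality and hence is not a circuit.

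I expect the argument to be short and essentially linear-algebraic. The only place that needs care is the bookkeeping translating ``support-minimal'' into ``$W = \R\veg$'', keeping the distinction between $\subseteq$ and $\subsetneq$ straight, together with the last case, where the rank-one correction $\vex = \veg - c\,\vew$ is engineered to annihilate one additional coordinate of $B\vex$. Pointedness is used exactly once, to ensure $S$ is nonempty so that such a coordinate $i_0$ exists.
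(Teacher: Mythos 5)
Your proof is correct and follows essentially the same route as the paper's: both directions reduce to the observation that $\rank\binom{A}{B'}=n-1$ is equivalent to $\ker\binom{A}{B'}$ being the line $\R\veg$. The only cosmetic difference is in the ``circuit $\Rightarrow$ rank $n-1$'' direction, where the paper augments $B'$ with extra rows of $B$ to reach rank $n-1$ and reads off a support-smaller kernel generator, while you take a second kernel vector $\vew$ and, if necessary, form the combination $\veg - c\,\vew$ to kill one more coordinate of $B\vex$ --- two equivalent ways of extracting the same certificate.
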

\begin{proof}
Suppose first that $\veg$ is a circuit of $P$. Since $P$ is pointed, $\rank \binom{A}{B'} \leq n-1$. If $\rank \binom{A}{B'} < n-1$, there exist rows of $B$ which can be added to $B'$ to form a row-submatrix $B''$ of $B$ with $\rank \binom{A}{B''} = n-1$. However, $\ker \binom{A}{B''}$ is then generated by some $\vey$ with $\supp(B \vey) \subsetneq \supp(B \veg)$, contradicting the fact that $\veg$ is a circuit.

Conversely, if $\rank \binom{A}{B'} = n-1$ then $\ker \binom{A}{B'}$ is generated by $\veg$. Hence, any $\vey \in \ker(A) \setminus \{\ve0\}$ satisfying  $\supp(B \vey) \subseteq \supp(B \veg)$ is a scalar multiple of $\veg$, implying that $\veg$ is a circuit.
\eoproof  \end{proof}

This property yields the following lemma, which generalizes Proposition~3.3 in \cite{o-10} by relating the components of a circuit $\veg$ and the corresponding vector $B \veg$ to the maximum absolute subdeterminant of the constraint matrices.

\begin{lemma}[\cite{o-10}]\label{lem:circuit_determinant}
Let $P = \{ \vex \in \R^n \colon A \vex = \veb, B \vex \leq \ved \}$ be a polyhedron and let $\veg \in \mathcal{C}(A,B)$ be a circuit of $P$. Then $ \max_i |\veg_i| \leq \Delta \left( M \right)$ and $ \max_i |(B \veg)_i| \leq \Delta \left( M \right)$, where $M := \binom{A}{B}$ and $\Delta(M)$ denotes the maximum absolute value of a subdeterminant of $M$. 
\end{lemma}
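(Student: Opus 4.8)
The plan is to reduce both inequalities to a single application of Cramer's rule, using \Cref{lem:circuit_one-dim} to identify the linear system that pins down $\veg$. First I would fix the circuit $\veg \in \mathcal{C}(A,B)$ and let $B'$ be the maximal row-submatrix of $B$ with $B'\veg = \ve0$, so that by \Cref{lem:circuit_one-dim} the matrix $\binom{A}{B'}$ has rank $n-1$. Choose a set of $n-1$ linearly independent rows of $\binom{A}{B'}$; call the resulting $(n-1)\times n$ submatrix $N$. Then $\ker N$ is one-dimensional and spanned by $\veg$ (since $\veg \in \ker N$ and $\veg \neq \ve 0$). The standard description of the kernel of a full-row-rank $(n-1)\times n$ matrix is that it is spanned by the vector $\vew$ whose $j$-th component is $(-1)^j \det(N_{\hat{\jmath}})$, where $N_{\hat\jmath}$ is $N$ with column $j$ deleted. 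Each such $\det(N_{\hat\jmath})$ is a subdeterminant of $M = \binom{A}{B}$ (it is the determinant of an $(n-1)\times(n-1)$ submatrix of $M$), so $\max_j |\vew_j| \leq \Delta(M)$.

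Next I would pass from $\vew$ back to $\veg$. Since $\veg$ has coprime integer components and spans the same line as $\vew$, and $\vew$ also has integer components, $\veg$ divides $\vew$ entrywise in the sense that $\vew = c\,\veg$ for some nonzero integer $c$; hence $|\veg_j| \leq |\vew_j| \leq \Delta(M)$ for every $j$ (using $|c| \geq 1$). This gives the first inequality $\max_i |\veg_i| \leq \Delta(M)$.

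For the second inequality, $\max_i |(B\veg)_i| \leq \Delta(M)$, I would handle the two kinds of rows $B_i$ separately. If $B_i$ is a row of $B'$, then $(B\veg)_i = B_i \veg = 0 \leq \Delta(M)$ trivially. If $B_i$ is not a row of $B'$, then by maximality of $B'$ we have $B_i \veg \neq 0$, and I would estimate $|B_i \veg|$ by expressing it again via Cramer's rule: augmenting $N$ with the row $B_i$ yields an $n \times n$ submatrix $\binom{N}{B_i}$ of $M$, and expanding $\det\binom{N}{B_i}$ along its last row gives $\det\binom{N}{B_i} = \sum_j (-1)^{n+j} (B_i)_j \det(N_{\hat\jmath}) = \pm\, (B_i \cdot \vew)$ (up to sign), i.e.\ $B_i \vew = \pm \det \binom{N}{B_i}$, which is a subdeterminant of $M$ and so has absolute value at most $\Delta(M)$. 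Then $|B_i \veg| = |B_i \vew| / |c| \leq |B_i \vew| \leq \Delta(M)$, completing the proof.

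The main obstacle I anticipate is the bookkeeping in the Cramer's-rule identity $B_i \vew = \pm\det\binom{N}{B_i}$: one must be careful that the alternating signs $(-1)^j$ used in defining $\vew$ line up correctly with the cofactor expansion of the augmented determinant, and that deleting column $j$ from $N$ genuinely yields an $(n-1)\times(n-1)$ submatrix of $M$ (which it does, since the rows of $N$ are rows of $M$). A minor additional point is the edge case $n = 1$, where $\binom{A}{B'}$ having rank $0$ forces $\veg$ to be (up to sign) the vector $(1)$ and $\Delta(M) \geq 1$ as long as $M$ has a nonzero entry; this is easily checked directly. Everything else is routine linear algebra.
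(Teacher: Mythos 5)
Your proof is correct and follows essentially the same route as the paper: both use \Cref{lem:circuit_one-dim} to pin $\veg$ down as the generator of the kernel of a rank-$(n-1)$ subsystem, obtain a generator whose entries are subdeterminants of $M$ via Cramer's rule, and pass to $\veg$ using coprimality. The only difference is packaging — the paper bounds $\veg$ and $B\veg$ simultaneously by augmenting the system with a $-I$ block so that $(\veg, B\veg)^{\T}$ generates the kernel of a single larger matrix with $\Delta(M_{B'}) = \Delta(M)$, whereas you bound $\veg$ first and then recover $|B_i\veg| \leq \Delta(M)$ from the cofactor-expansion identity $B_i\vew = \pm\det\binom{N}{B_i}$; both are fine.
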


\begin{proof}
Let $B'$ denote the maximal row-submatrix of $B$ such that $B'\veg = \ve0$. By \Cref{lem:circuit_one-dim}, $\rank \binom{A}{B'} = n-1$. Consider the kernel of the matrix
\begin{align*}
    M_{B'} = \left( \begin{array}{c c}
A  &  \ve0\\
B'  & \ve0 \\
B & -I
\end{array}\right).
\end{align*}
Any $(\vex, \vey)^T \in \ker(M_{B'})$ must satisfy $\vey = B \vex$ in addition to $A\vex = \ve0$ and $B' \vex  = \ve0$. Since $\veg$ generates $\ker \binom{A}{B'}$, it follows that $\ker(M_{B'})$ is one-dimensional and generated by $(\veg, B\veg)^T$. Cramer's rule then yields
\begin{align*}
    \max_i \left\lvert (\veg, B \veg)^T_i \right\rvert \leq \Delta \left( \begin{array}{c c}
A  &  \ve0\\
B'  & \ve0 \\
B & -I
\end{array}\right) = \Delta(M).
\end{align*}
\eoproof  \end{proof}

As a corollary to \Cref{lem:circuit_determinant}, if $\binom{A}{B}$ is totally unimodular then any circuit $\veg \in \mathcal{C}(A,B)$ satisfies $\veg \in \{0,1,-1\}^n$ and $B\veg \in \{0,1,-1\}^m$. \Cref{thm:tu_circuits} now follows directly.

\begin{customthm}{3}
Let $P = \{ \vex \in \R^n \colon A \vex = \veb, B \vex \leq \ved \}$ be an integral polyhedron whose constraint matrix $\binom{A}{B}$ is totally unimodular. Then all circuit walks in $P$ are integral.
\end{customthm}

\begin{proof}
We may assume that the right-hand side vectors $\veb$ and $\ved$ are integral, for otherwise either $P$ would be empty or the components of $\ved$ could be rounded up without changing the polyhedron. Let $\vex$ be any integral point in $P$ and let $\veg \in \mathcal{C}(A,B)$ be a feasible circuit direction at $\vex$. For any strict inequality $(B \vex)_i < \ved_i$ at $\vex$, we must have $(B \vex)_i \leq \ved_i-1$ since $B$ and $\ved$ are integral. Applying a step with step size $\alpha = 1$ in the circuit direction $\veg$, it follows by \Cref{lem:circuit_determinant} that $(B(\vex + \veg))_i = (B \vex)_i + (B \veg)_i \leq (B \vex)_i  + 1 \leq \ved_i$. Continuing in this direction until an inequality becomes strict, we see that the stopping point $\vex + \alpha \veg$ must be integral with integral step size $\alpha$.
\eoproof  \end{proof}

\section{Proof of \Cref{thm:examples}}\label{sec:proof_thm_examples}

In this section we prove \Cref{thm:examples}, which states that there exist well-known examples of integral polyhedra in each of the four layers of the hierarchy based on circuit walk behavior. Namely, we show that there are matroid polytopes which are GCW but not ICW (\Cref{sec:matroid}), that there are transporation polytopes which are ICW but not VCW (\Cref{sec:tp}), that there are bounded-size partition polytopes which are VCW but not ECW (\Cref{sec:bounded_size}), and that all fixed-size partition polytopes are ECW (\Cref{sec:fixedpp}). 

\subsection{Matroid Polytopes}\label{sec:matroid}

For a matroid $M$ with ground set $E$ and rank function $f$, the \textit{matroid polytope} $P(f)$ associated with $M$ can be represented as: 
\begin{equation*}
P(f) = \left\lbrace \vex \in \R^E \colon \vex \geq \ve0,  \ \sum_{e \in S} \vex_e \leq f(S) \ \forall S \subseteq E  \right\rbrace.
\end{equation*}
The vertices of $P(f)$ consist of all $\vex \in \{0,1\}^E$ that are incidence vectors of the independent sets of $M$. Two vertices are adjacent if and only if they differ in exactly one coordinate or they differ in exactly two coordinates while satisfying a certain quite technical ordering relation \cite{t-84}.

We examine the behavior of circuit walks in $P(f)$. The matrix $B$ in the system $B \vex \leq \ved$ defining $P(f)$ consists of the identity matrix and a submatrix corresponding to the constraints of the form $\sum_{e \in S} \vex_e \leq f(S)$. A vector $\veg \in \R^E \setminus \{\ve0\}$ with coprime integer components is a circuit of $P(f)$ when $B\veg$ is support-minimal over $\{B\vex \colon \vex \in \R^E \setminus \{\ve0 \} \}$. Note that each positive and negative unit vector is therefore a circuit of $P(f)$. Any circuit walk in $P(f)$ that exclusively uses these directions is an edge walk and can be interpreted as individually adding/removing elements from an independent set of $M$. Additionally, the difference of any two unit vectors is a circuit of $P(f)$. Combinatorially, these circuits correspond to swapping out some element of an independent set of $M$ for another. A circuit walk which exclusively uses these directions can traverse edges of $P(f)$ when the adjacency conditions of \cite{t-84} are satisfied, but it is possible that the walk is only a vertex walk and not an edge walk.

Furthermore, a circuit walk in $P(f)$ may be non-integral. Since $P(f)$ is a 0/1-polytope, only circuits from $\{0,1,-1\}^E$ can be used in an integral circuit walk. However, $P(f)$ has many circuits outside this domain. For instance, if $\{1,2,3,4\} \subseteq E$, consider the vector $\veg \in \R^E$ where:
\begin{equation*}
\veg_e =  \begin{cases}
2 & \text{if }e = 1 \\
-1 & \text{if } e \in \{2,3,4\} \\
0 & \text{otherwise}.
\end{cases}
\end{equation*}
To see that $\veg$ is a circuit, note that for any $S \subsetneq E$ containing $\{1\}$ and some $2$-set of $\{2,3,4\}$, we have $\sum_{e \in S} \veg_e = 0$. However, for any nonzero $\vey \in \R^E$ whose support is strictly contained in the support of $\veg$, it can be shown that $\sum_{e \in S} \vey_e \neq 0$ for some such subset $S$. Therefore, $\veg$ is indeed a circuit of $P(f)$. If $I := \{2, 3, 4\}$ is an independent set of $M$, then taking a step in the direction of $\veg$ starting at the corresponding vertex $\vev_I$ of $P(f)$ leads to a non-integral point in $P(f)$. In fact, for any rank function $f$, if $|E| \geq 4$ and if $M$ contains any independent set with size at least three, then a non-integral circuit walk can be constructed in $P(f)$.

\subsection{Transportation Polytopes}\label{sec:tp}

Given a set of $m$ suppliers and $n$ customers where $\veu \in \Z_+^m$ gives the supply of each supplier and $\vev \in \Z_+^n$ gives the demand of each customer, the corresponding \textit{transportation polytope} $P$ consists of all $\vey \in \R^{mn}$ that describe feasible commodity flow assignments from suppliers to customers:
\begin{align*}
\sum_{j=1}^n y_{ij} &= u_i, \ \ \ i=1,...,m \\
\sum_{i=1}^m y_{ij} &= v_j, \ \ \ j=1,...,n \\
y_{ij} & \geq 0, \ \  \  \ i = 1,...,m, \ j=1,...,n.
\end{align*}

Given a feasible flow assignment $\vey \in P$, the \textit{support graph} $B(\vey)$ of $\vey$ is the bipartite graph with partite sets corresponding to the suppliers and customers in which there exists an edge with weight $y_{ij}$ between supplier $i$ and customer $j$ if and only if $y_{ij} > 0$. It can be shown that any $\vey \in P$ is a vertex of $P$ if and only if $B(\vey)$ is acyclic. Further, two vertices are adjacent if and only if the union of the corresponding support graphs contains exactly one cycle \cite{kw-68}.

It follows that the circuits of $P$ consist of all simple cyclical exchanges of flow among the suppliers and customers \cite{bdfm-18}. Specifically, $\veg \in \R^{mn}$ is a circuit of $P$ if and only if the support of $\veg$ corresponds to a cycle in the complete bipartite graph $K_{m,n}$ whose edges alternately correspond to components $1$ and $-1$ of $\veg$. Hence, for any $\vey \in P$, applying a step in a feasible circuit direction from $\vey$ corresponds to reducing weight along every other edge of some cycle of $K_{m,n}$ while simultaneously increasing weight along the remaining edges of the cycle. The step terminates once the weight of an edge in $B(\vey)$ is reduced to 0.

Since the constraint matrix of $P$ is totally unimodular, we know by \Cref{thm:tu_circuits} that all circuit walks in $P$ are integral. However, it need not hold that all circuit walks are vertex walks. Equivalently, if $\vex$ is a vertex of $P$ and $\vey$ is reached by a maximal circuit step from $\vex$, then although $B(\vex)$ is acyclic, it need not hold that $B(\vey)$ is acyclic. 

\begin{figure}
\centering
\begin{subfigure}[t]{0.28 \textwidth}
\centering
\begin{tikzpicture}
[scale=1.2, vertices/.style={draw, fill=black, circle, inner sep=0.5pt}]

\node[vertices] (a) at (0,0) {};
\node[vertices] (b) at (0,-1) {};
\node[vertices] (c) at (0,-2) {};
\node[vertices] (d) at (1,0) {};
\node[vertices] (e) at (1,-1) {};
\node[vertices] (f) at (1,-2) {};

\node[left] at (a) {\small{$s_1$}};
\node[left] at (b) {\small{$s_2$}};
\node[left] at (c) {\small{$s_3$}};
\node[right] at (d) {\small{$c_1$}};
\node[right] at (e) {\small{$c_2$}};
\node[right] at (f) {\small{$c_3$}};

\foreach \to/\from in {a/e,b/d,b/f,c/e,c/f}
	\draw [-] (\to)--(\from);

\draw [fill, black] (a) circle [radius=0.05];
\draw [fill, black] (b) circle [radius=0.05];
\draw [fill, black] (c) circle [radius=0.05];
\draw [fill, black] (d) circle [radius=0.05];
\draw [fill, black] (e) circle [radius=0.05];
\draw [fill, black] (f) circle [radius=0.05];
\end{tikzpicture}
\caption{The acyclic support graph $B(\vex)$ for vertex $\vex$.}
\end{subfigure}
\qquad
\begin{subfigure}[t]{0.28 \textwidth}
\centering
\begin{tikzpicture}
[scale=1.2, vertices/.style={draw, fill=black, circle, inner sep=0.5pt}]

\node[vertices] (a) at (0,0) {};
\node[vertices] (b) at (0,-1) {};
\node[vertices] (c) at (0,-2) {};
\node[vertices] (d) at (1,0) {};
\node[vertices] (e) at (1,-1) {};
\node[vertices] (f) at (1,-2) {};

\node[left] at (a) {\small{$s_1$}};
\node[left] at (b) {\small{$s_2$}};
\node[left] at (c) {\small{$s_3$}};
\node[right] at (d) {\small{$c_1$}};
\node[right] at (e) {\small{$c_2$}};
\node[right] at (f) {\small{$c_3$}};

\foreach \to/\from in {a/e,b/d,b/f,c/e,c/f}
	\draw [-] (\to)--(\from);

\draw [fill, black] (a) circle [radius=0.05];
\draw [fill, black] (b) circle [radius=0.05];
\draw [fill, black] (c) circle [radius=0.05];
\draw [fill, black] (d) circle [radius=0.05];
\draw [fill, black] (e) circle [radius=0.05];
\draw [fill, black] (f) circle [radius=0.05];
	
\draw[draw=red, dashed, line width= 1,  ->] (a)--(d);
\draw[draw=red, dashed, line width= 1,  ->] (b)--(e);
\draw[draw=red, dashed, line width= 1,  ->] (d)--(b);
\draw[draw=red, dashed, line width= 1,  ->] (e)--(a);
\end{tikzpicture}
\caption{A maximal circuit step applied in the direction of $\veg$.}
\end{subfigure}
\qquad
\begin{subfigure}[t]{0.28 \textwidth}
\centering
\begin{tikzpicture}
[scale=1.2, vertices/.style={draw, fill=black, circle, inner sep=0.5pt}]

\node[vertices] (a) at (0,0) {};
\node[vertices] (b) at (0,-1) {};
\node[vertices] (c) at (0,-2) {};
\node[vertices] (d) at (1,0) {};
\node[vertices] (e) at (1,-1) {};
\node[vertices] (f) at (1,-2) {};

\node[left] at (a) {\small{$s_1$}};
\node[left] at (b) {\small{$s_2$}};
\node[left] at (c) {\small{$s_3$}};
\node[right] at (d) {\small{$c_1$}};
\node[right] at (e) {\small{$c_2$}};
\node[right] at (f) {\small{$c_3$}};

\foreach \to/\from in {a/d,b/e,b/f,c/e,c/f}
	\draw [-] (\to)--(\from);

\draw [fill, black] (a) circle [radius=0.05];
\draw [fill, black] (b) circle [radius=0.05];
\draw [fill, black] (c) circle [radius=0.05];
\draw [fill, black] (d) circle [radius=0.05];
\draw [fill, black] (e) circle [radius=0.05];
\draw [fill, black] (f) circle [radius=0.05];
\end{tikzpicture}
\caption{The cyclic support graph $B(\vex + \veg)$ of the resulting solution.}
\end{subfigure}
\caption{A circuit step in a transportation polytope that does not lead to a vertex.}\label{fig:transportation}
\end{figure}
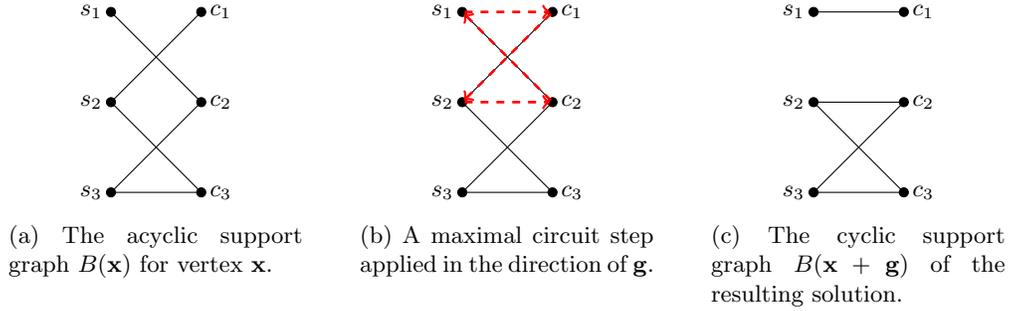

As an example, consider the transportation problem with suppliers $s_1, s_2, s_3$ and customers $c_1, c_2, c_3$, where $\veu = \vev = (1,2,2)^T$. Then $$\vex = (x_{11}, x_{12}, x_{13}, x_{21}, x_{22}, x_{23}, x_{31}, x_{32}, x_{33})^T = (0,1,0,1,0,1,0,1,1)^T$$ is a vertex of $P$ and $\veg = (1, -1, 0, -1, 1, 0, 0, 0, 0)^T$ is a circuit. However, after applying a step in the direction of $\veg$ from $\vex$ we reach a solution whose support graph is cyclic, and thus not a vertex. See \Cref{fig:transportation}.

\subsection{Bounded-size Partition Polytopes}\label{sec:bounded_size}

Similar to the partition polytopes for fixed-size clusterings introduced in \cite{b-13}, the \textit{bounded-size partition polytope} $PP(\kappa^{\pm})$ is associated with the partitioning of a set $X = \{x_1,...,x_n\}$ of items into clusters $C_1,...,C_k$, where each cluster $C_i$ must satisfy $\kappa_i^- \leq |C_i| \leq \kappa_i^+$ given $\kappa_i^-, \kappa_i^+ \in \Z_+$. For $i = 1,...,k$ and $j = 1,...,n$, let $y_{ij}$ be a binary variable indicating whether or not item $x_j$ is assigned to cluster $C_i$. Then $PP(\kappa^\pm)$ is the 0/1-polytope defined by the following system of constraints:
\begin{align*}
\sum_{i=1}^k y_{ij} &= 1 \ \ \ \ \  \  j=1,...,n \\
\sum_{j=1}^n y_{ij} &\geq \kappa_i^-  \ \ \ \ i=1,...,k\\
\sum_{j=1}^n y_{ij} &\leq \kappa_i^+  \ \ \ \  i=1,...,k \\
y_{ij} & \geq 0 \ \ \ \ \  \ i=1,...,k, \  j=1,...,n.
\end{align*}
We note that this polytope is an instance of the bounded-shape partition polytope described in \cite{bh-17} if $X$ is the standard basis of $\R^n$.

As the constraint matrix defining the polytope is totally unimodular and the right-hand sides are integral, the vertices of $PP(\kappa^\pm)$ consist of those $\vey \in \{0,1\}^{kn}$ corresponding to feasible clustering assignments. As in \cite{b-13} and \cite{bh-17}, given two such assignments $\vey^1, \vey^2$, define the \textit{clustering difference graph} $CDG(\vey^1, \vey^2)$ from $\vey^1$ to $\vey^2$ to be the directed graph with nodes $c_1,...,c_k$ where an edge $(c_i, c_\ell)$ with label $x_j$ is included if and only if $\vey^1_{i j} = \vey^2_{\ell j} = 1$ with $i \neq \ell$. Thus, the edges of $CDG(\vey^1, \vey^2)$ give all single-item transfers necessary in order to transform the clustering assignment of $\vey^1$ into that of $\vey^2$. An example of a simple $CDG$ is given in \Cref{fig:cdg_example}.

\begin{figure}[t]
    \centering
    \begin{tikzpicture}[vertices/.style={draw, fill=black, circle, inner sep=0pt, minimum size = 4pt, outer sep=0pt}, scale=1.7]

\node[vertices] (w_1) at (0,0.5) {};
\node[vertices] (w_2) at (0,1.5) {};
\node[vertices] (w_3) at (1, 1.5) {};
\node[vertices] (w_4) at (1, 0.5) {};

\node[vertices] (w_5) at (2.5,0.4) {};
\node[vertices] (w_6) at (2.5,1.6) {};

\foreach \to/\from in {w_1/w_2, w_2/w_3, w_3/w_4, w_4/w_1}
\draw[draw=black, line width= 1,  ->, >=latex]  (\to)--(\from);

\path [draw=black,line width= 1,->,>=latex,black] (w_5) edge[bend left=40] (w_6);
\path [draw=black,line width= 1,->,>=latex,black] (w_6) edge[bend left=40] (w_5);

\node[below left] at (w_1) {$c_1$};
\node[above left] at (w_2) {$c_2$};
\node[above right] at (w_3) {$c_3$};
\node[below right] at (w_4) {$c_4$};
\node[below] at (w_5) {$c_5$};
\node[above] at (w_6) {$c_6$};

\node (x_1) at (-0.15, 1.0) {$x_1$};
\node (x_2) at (0.5, 1.65) {$x_2$};
\node (x_3) at (1.15, 1.0) {$x_3$};
\node (x_4) at (0.5, 0.35) {$x_4$};

\node (x_5) at (2.1, 1.0) {$x_5$};
\node (x_6) at (2.9, 1.0) {$x_6$};

\end{tikzpicture}
    \caption{A depiction of the clustering difference graph $CDG(\vey^1, \vey^2)$ when $\vey^1$ corresponds to the clustering $C^1 = (\{x_1\},\{x_2\},\{x_3\},\{x_4\},\{x_5\},\{x_6\})$ and $\vey^2$ corresponds to $C^2 = (\{x_4\},\{x_1\},\{x_2\},\{x_3\},\{x_6\},\{x_5\})$. The edges of $CDG(\vey^1, \vey^2)$ describe the individual transfers needed to change $C^1$ into $C^2$.}
    \label{fig:cdg_example}
\end{figure}
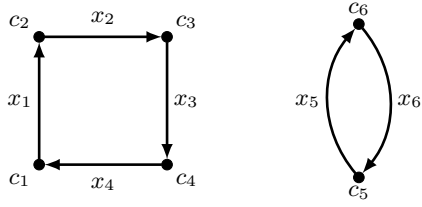

A necessary condition for vertices $\vey^1, \vey^2$ of $PP(\kappa^{\pm})$ to share an edge is that $CDG(\vey^1, \vey^2)$ is either a directed path or a directed cycle \cite{bh-17}. Here, we use clustering difference graphs to provide a full characterization of both the edges and the circuits of $PP(\kappa^\pm)$. 

To do so, given a pair of clustering assignments $\vey^1, \vey^2$, let $C^1 = (C^1_1,...,C^1_k)$ be the clustering of $X$ associated with $\vey^1$ and let $C^2 = (C^2_1,...,C^2_k)$ be the clustering associated with $\vey^2$. We say that a node $c_i$ of $CDG(\vey^1, \vey^2)$ is \textit{free} if $\kappa_i^- < |C^1_i| < \kappa_i^+$. Hence $C^1_i$ may receive or give away some item of $X$ and its size constraints will still be satisfied. Similarly, we say that $c_i$ is \textit{saturated} if $\kappa_i^- < |C^1_i| = \kappa_i^+$, that $c_i$ is \textit{depleted} if $\kappa_i^- = |C^1_i| < \kappa_i^+$, and that $c_i$ is \textit{fixed} if $\kappa_i^- = |C^1_i| = \kappa_i^+$.  Finally, we say that a directed graph $D$ is a \textit{valid $CDG$} at $\vey^1$ if there exists a vertex $\vey^i \in PP(\kappa^\pm)$ such that $D = CDG(\vey^1, \vey^i)$. The following lemma characterizes the edges of $PP(\kappa^\pm)$.

\begin{lemma}\label{lem:partition_polytope_edges}
Let $\vey^1, \vey^2$ be a pair of vertices in $PP(\kappa^\pm)$. Then $\vey^1$ and $\vey^2$ share an edge in $PP(\kappa^\pm)$ if and only if $CDG(\vey^1, \vey^2)$ cannot be non-trivially decomposed into valid $CDG$s at $\vey^1$. Equivalently, $\vey^1$ and $\vey^2$ share an edge if and only if $CDG(\vey^1, \vey^2)$ is a single edge, a single directed path in which no interior vertices are free, or a single directed cycle in which at most one vertex is free.
\end{lemma}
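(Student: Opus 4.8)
The plan is to characterize edges of $PP(\kappa^\pm)$ by exploiting the standard fact that two vertices $\vey^1, \vey^2$ of a polytope fail to be adjacent if and only if the midpoint $\tfrac{1}{2}(\vey^1 + \vey^2)$ can be written as a convex combination of other vertices — equivalently, if and only if there exist vertices $\vey^3 \neq \vey^4$ with $\vey^1 + \vey^2 = \vey^3 + \vey^4$. I would first translate this algebraic condition into the language of clustering difference graphs: if $\vey^3, \vey^4$ are vertices with $\vey^1 + \vey^2 = \vey^3 + \vey^4$, then each item $x_j$ must be moved (from its $C^1$-cluster to its $C^2$-cluster) by $\vey^3$ either entirely or not at all, and $\vey^4$ makes the complementary moves; hence the edge set of $CDG(\vey^1, \vey^2)$ splits into the disjoint edge sets of $CDG(\vey^1, \vey^3)$ and $CDG(\vey^1, \vey^4)$, and both of these must be valid $CDG$s at $\vey^1$. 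Conversely, any non-trivial decomposition of the edges of $CDG(\vey^1, \vey^2)$ into two valid $CDG$s at $\vey^1$ yields such vertices $\vey^3, \vey^4$. This gives the first "if and only if" of the lemma directly.

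The second equivalence then requires understanding exactly which directed graphs on nodes $c_1, \dots, c_k$ arise as valid $CDG$s at $\vey^1$. Since each edge $(c_i, c_\ell)$ labeled $x_j$ represents transferring $x_j$ out of $C_i^1$ into $C_\ell^1$, a set of such transfers is realizable by a feasible clustering $\vey^i$ precisely when, after performing them, every cluster size stays within its bounds — i.e., at each node, (indegree $-$ outdegree) added to $|C_i^1|$ lands in $[\kappa_i^-, \kappa_i^+]$. I would note that $CDG(\vey^1, \vey^2)$ being a directed path or cycle (the known necessary condition from \cite{bh-17}) means at each interior node indegree $=$ outdegree $= 1$, at the two path endpoints one of them is $1$ and the other $0$, and in a cycle every node has indegree $=$ outdegree $= 1$. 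So validity of a sub-multidigraph reduces to a purely local size check at the (at most two) nodes where indegree $\neq$ outdegree. The combinatorial heart is then: a directed path decomposes non-trivially iff it has a free interior vertex (split the path there — the prefix ends at a free node, which can shed an item, and the suffix starts there, which can receive one; conversely a decomposition forces some interior vertex to act as an endpoint of a valid sub-$CDG$, hence must be free), and a directed cycle decomposes non-trivially iff it has two distinct free vertices (a cycle with at most one free vertex cannot be cut, since cutting it at one point produces a path whose *both* endpoints would need to be free; with two free vertices, cut there to get two paths, each with free endpoints).

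I would organize the write-up as: (1) state and prove the midpoint/decomposition equivalence for general polytopes and specialize it; (2) prove the characterization of valid $CDG$s at $\vey^1$ via the local size inequalities at imbalanced nodes; (3) do the case analysis on paths and cycles using the free/saturated/depleted/fixed terminology already introduced, carefully checking that the endpoint of a valid sub-path must be free (for an \emph{interior} vertex of the original path) while an original endpoint imposes no constraint. The main obstacle I anticipate is step (3) in the cycle case and in handling \emph{multiple} edges between the same pair of clusters or several distinct single items sharing a node: one must be careful that "decomposing into valid $CDG$s" is about partitioning the edge (transfer) set, and that a would-be cut vertex may need to serve simultaneously as a sink for one piece and a source for the other — which is exactly why a single "free" label (strict inequality on \emph{both} sides) is the right condition, whereas a "saturated" or "depleted" node can absorb a transfer in only one direction and hence blocks the cut. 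Ensuring the endpoint bookkeeping is exactly right — distinguishing the original path's endpoints (unconstrained) from interior vertices (must be free to serve as a cut point) — is the delicate part; everything else is routine once the decomposition lemma and the validity criterion are in place.
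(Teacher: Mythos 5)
Your reverse direction and your combinatorial classification are sound: a non-trivial decomposition of $CDG(\vey^1,\vey^2)$ into valid $CDG$s at $\vey^1$ gives $\vey^1+\vey^2=\vey^3+\vey^4$ for vertices $\vey^3,\vey^4\notin\{\vey^1,\vey^2\}$, hence two representations of the midpoint and non-adjacency; and your analysis of when a path or cycle admits such a decomposition (a free interior vertex as the unique place where incoming and outgoing transfers can be assigned to different pieces, two free vertices needed to cut a cycle) is exactly the paper's argument. The gap is in the other, harder direction. You claim as a ``standard fact'' that non-adjacency of $\vey^1,\vey^2$ implies the existence of two \emph{other} vertices $\vey^3\neq\vey^4$ with $\vey^1+\vey^2=\vey^3+\vey^4$. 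Only the converse of this is standard. What non-adjacency gives in general is a representation $\tfrac12(\vey^1+\vey^2)=\sum_i\lambda_i\vew_i$ with some weight on a vertex outside $\{\vey^1,\vey^2\}$ -- arbitrarily many vertices, arbitrary weights. Extracting from this an exact partition of the transfer set into two \emph{valid} $CDG$s is not automatic: each $\vew_i$ does correspond to a subset of the transfers of $CDG(\vey^1,\vey^2)$, but the complementary transfer set $\vey^1+\vey^2-\vew_i$ need not satisfy the cluster-size bounds, so it need not be a vertex. The coincidence of this ``combinatorial'' non-adjacency with geometric non-adjacency is a polytope-specific property (it fails for general $0/1$-polytopes), and proving it for $PP(\kappa^\pm)$ is essentially the content of the lemma, not a fact one can quote.

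The paper closes exactly this hole by exhibiting a positive certificate of adjacency: when $CDG(\vey^1,\vey^2)$ is a single edge, a path with no free interior vertex, or a cycle with at most one free vertex, it constructs an explicit objective vector $f$ (zero on the common support and on transfers incident to free, fixed, or endpoint clusters; $-1$ and $+1$ on transfers incident to saturated and depleted interior clusters respectively; a large value $2n$ elsewhere) and verifies that every vertex $\vev\notin\{\vey^1,\vey^2\}$ satisfies $f^T\vev>f^T\vey^1=f^T\vey^2$, using that any competing $CDG(\vey^1,\vev)$ inside $CDG(\vey^1,\vey^2)$ is a union of directed subpaths each of which either starts at a saturated/fixed interior node or ends at a depleted/fixed interior node. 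To complete your proof you would need either to supply such a certificate or to actually prove the two-vertex-sum characterization of adjacency for this particular polytope; as written, the crux is assumed rather than proven.
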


\begin{proof}
For the forward direction, assume that $\vey^1$ and $\vey^2$ are joined by an edge in $PP(\kappa^\pm)$ and suppose for the purpose of contradiction that $CDG(\vey^1, \vey^2)$ can be decomposed into valid $CDG$s at $\vey^1$: $CDG(\vey^1, \vey^3),...,CDG(\vey^1, \vey^\ell)$. Since $\vey^1$ and $\vey^2$ share an edge, there exists a vector $f \in \mathbb{R}^{kn}$ such that $f^T \vey^1 = f^T \vey^2$ but $f^T \vev > f^T \vey^1$ for any other vertex $\vev$ of $PP(\kappa^\pm)$. Specifically, it must hold that $f^T \vey^i > f^T \vey^1$ for $i = 3,...,\ell$.

However, note that the clustering of $\vey^2$ can be derived from that of $\vey^1$ by independent applications of the transfers given by each $CDG(\vey^1, \vey^i)$. In other words, we have $\vey^2 - \vey^1 = (\vey^3 - \vey^1) + \cdots + (\vey^\ell - \vey^1)$. Thus
\begin{equation*}
f^T(\vey^2 - \vey^1) = f^T(\vey^3 - \vey^1) + \cdots + f^T(\vey^\ell - \vey^1) > 0 + \cdots + 0 = 0,
\end{equation*}
a contradiction.

Conversely, assume that $CDG(\vey^1, \vey^2)$ cannot be decomposed in such a manner. See \Cref{fig:cdgs} for examples of possible structures for $CDG(\vey^1, \vey^2)$ and their implications. Note that if $CDG(\vey^1, \vey^2)$ were anything other than a single path or cycle, it could be decomposed into two valid $CDG$s at $\vey^1$ by isolating a directed cycle or a maximal path from the remainder of the graph. Further, if $CDG(\vey^1, \vey^2)$ were a directed path with a free interior vertex $c_i$, it could be decomposed into two valid $CDG$s at $\vey^1$ by splitting the graph at $c_i$ (\Cref{fig:cdg_a}). Similarly, if $CDG(\vey^1, \vey^2)$ were a directed cycle with at least two free vertices, we could split $CDG(\vey^1, \vey^2)$ into two directed paths at these vertices in order to form two valid $CDG$s at $\vey^1$ (\Cref{fig:cdg_c}). Hence, $CDG(\vey^1, \vey^2)$ must meet the specifications of the lemma (as in \Cref{fig:cdg_b,fig:cdg_d}).

\begin{figure}[t]
\centering
\begin{subfigure}[t]{0.48 \textwidth}
\centering
\begin{tikzpicture}
[scale=1.3, vertices/.style={draw, fill=black, circle, inner sep=0.5pt}]

\node[vertices] (a) at (0,0) {};
\node[vertices] (b) at (1,0) {};
\node[vertices] (c) at (2,0) {};
\node[vertices] (d) at (3,0) {};
\node[vertices] (e) at (4,0) {};

\node[above] at (a) {\footnotesize{$c_1$}};
\node[below] at (b) {\footnotesize{$c_2$}};
\node[above] at (c) {\footnotesize{$c_3$ (free)}};
\node[below] at (d) {\footnotesize{$c_4$}};
\node[above] at (e) {\footnotesize{$c_5$}};

\foreach \to/\from in {a/b,b/c}
	\draw[draw=blue, line width= 2,  ->]  (\to)--(\from);
	
\foreach \to/\from in {c/d,d/e}
	\draw[draw=green, line width= 2,  ->]  (\to)--(\from);

\draw [fill, black] (a) circle [radius=0.05];
\draw [fill, black] (b) circle [radius=0.05];
\draw [fill, black] (c) circle [radius=0.05];
\draw [fill, black] (d) circle [radius=0.05];
\draw [fill, black] (e) circle [radius=0.05];
\end{tikzpicture}
\caption{A case where $CDG(\vey^1, \vey^2)$ can be decomposed by splitting the directed path at the free vertex $c_3$. Hence, $\vey^1$ and $\vey^2$ do not share an edge in $PP(\kappa^\pm)$.}\label{fig:cdg_a}
\end{subfigure}
\quad
\begin{subfigure}[t]{0.48 \textwidth}
\centering
\begin{tikzpicture}
[scale=1.3, vertices/.style={draw, fill=black, circle, inner sep=0.5pt}]

\node[vertices] (a) at (0,0) {};
\node[vertices] (b) at (1,0) {};
\node[vertices] (c) at (2,0) {};
\node[vertices] (d) at (3,0) {};
\node[vertices] (e) at (4,0) {};

\node[above] at (a) {\footnotesize{$c_1$}};
\node[below] at (b) {\footnotesize{$c_2$ (fixed)}};
\node[above] at (c) {\footnotesize{$c_3$ (depleted)}};
\node[below] at (d) {\footnotesize{$c_4$ (saturated)}};
\node[above] at (e) {\footnotesize{$c_5$}};

\foreach \to/\from in {a/b,b/c}
	\draw[draw=blue, line width= 2,  ->]  (\to)--(\from);
	
\foreach \to/\from in {c/d,d/e}
	\draw[draw=red, line width= 2,  ->]  (\to)--(\from);

\draw [fill, black] (a) circle [radius=0.05];
\draw [fill, black] (b) circle [radius=0.05];
\draw [fill, black] (c) circle [radius=0.05];
\draw [fill, black] (d) circle [radius=0.05];
\draw [fill, black] (e) circle [radius=0.05];
\end{tikzpicture}
\caption{A case where $CDG(\vey^1, \vey^2)$ cannot be decomposed into valid $CDG$s -- any decomposition contains a directed $c_i,c_j$-path in which either $c_i$ is depleted/fixed or $c_j$ is saturated/fixed. Thus, $\vey^1$ and $\vey^2$ share an edge in $PP(\kappa^\pm)$.}\label{fig:cdg_b}
\end{subfigure}

\begin{subfigure}[t]{0.48 \textwidth}
\centering
\begin{tikzpicture}
[scale=1.4, vertices/.style={draw, fill=black, circle, inner sep=0.5pt}]

\node[vertices] (a) at (0,0) {};
\node[vertices] (b) at (-.3,1) {};
\node[vertices] (c) at (0.5,1.8) {};
\node[vertices] (d) at (1.3,1) {};
\node[vertices] (e) at (1,0) {};

\node[below left] at (a) {\footnotesize{$c_1$ (free)}};
\node[left] at (b) {\footnotesize{$c_2$  }};
\node[above] at (c) {\footnotesize{$c_3$ (free)}};
\node[right] at (d) {\footnotesize{$c_4$  }};
\node[below right] at (e) {\footnotesize{$c_5$}};

\foreach \to/\from in {a/b,b/c}
	\draw[draw=green, line width= 2,  ->]  (\to)--(\from);
	
\foreach \to/\from in {c/d,d/e,e/a}
	\draw[draw=blue, line width= 2,  ->]  (\to)--(\from);

\draw [fill, black] (a) circle [radius=0.05];
\draw [fill, black] (b) circle [radius=0.05];
\draw [fill, black] (c) circle [radius=0.05];
\draw [fill, black] (d) circle [radius=0.05];
\draw [fill, black] (e) circle [radius=0.05];
\end{tikzpicture}
\caption{A case where $CDG(\vey^1, \vey^2)$ can be decomposed by splitting the cycle at the free vertices $c_1$ and $c_3$. So $\vey^1$ and $\vey^2$ must not share an edge in $PP(\kappa^\pm)$.}\label{fig:cdg_c}
\end{subfigure}
\quad
\begin{subfigure}[t]{0.48 \textwidth}
\centering
\begin{tikzpicture}
[scale=1.4, vertices/.style={draw, fill=black, circle, inner sep=0.5pt}]

\node[vertices] (a) at (0,0) {};
\node[vertices] (b) at (-.3,1) {};
\node[vertices] (c) at (0.5,1.8) {};
\node[vertices] (d) at (1.3,1) {};
\node[vertices] (e) at (1,0) {};

\node[below left] at (a) {\footnotesize{$c_1$ (free)}};
\node[left] at (b) {\footnotesize{$c_2$ (fixed)  }};
\node[above] at (c) {\footnotesize{$c_3$ (saturated)}};
\node[right] at (d) {\footnotesize{$c_4$ (depleted)  }};
\node[below right] at (e) {\footnotesize{$c_5$ (fixed)}};

\foreach \to/\from in {a/b,b/c}
	\draw[draw=red, line width= 2,  ->]  (\to)--(\from);
	
\foreach \to/\from in {c/d,d/e,e/a}
	\draw[draw=blue, line width= 2,  ->]  (\to)--(\from);

\draw [fill, black] (a) circle [radius=0.05];
\draw [fill, black] (b) circle [radius=0.05];
\draw [fill, black] (c) circle [radius=0.05];
\draw [fill, black] (d) circle [radius=0.05];
\draw [fill, black] (e) circle [radius=0.05];
\end{tikzpicture}
\caption{A case where $CDG(\vey^1, \vey^2)$ cannot be decomposed into valid $CDG$s by the same reasoning as in \Cref{fig:cdg_b}. So $\vey^1$ and $\vey^2$ share an edge in $PP(\kappa^\pm)$.}\label{fig:cdg_d}
\end{subfigure}
\caption{Structures for $CDG(\vey^1, \vey^2)$ that can and cannot be decomposed into valid $CDG$s.}\label{fig:cdgs}
\end{figure}

To show that $\vey^1 = (\vey^1_{11},...,\vey^1_{kn})^T$ and $\vey^2 = (\vey^2_{11},...,\vey^2_{kn})^T$ are joined by an edge in $PP(\kappa^\pm)$, it suffices to find a vector $f \in \mathbb{R}^{kn}$ such that $f^T \vey^1 = f^T \vey^2 < f^T \vev$ for any other vertex $\vev \in PP(\kappa^\pm)$. Define such a vector $f = (f_{11},...,f_{kn})^T$ by setting
\begin{equation*}
f_{ij} = 
\begin{cases}
0 &\small{\text{if } x_j \in C^1_i \cap C^2_i }\\
0 &\small{\text{if $c_i$ is fixed, free, or an endpoint, and $x_j$ is incident to $c_i$ in $CDG(\vey^1, \vey^2)$ }} \\
-1  &\small{\text{if $c_i$ is a saturated non-endpoint, and $x_j$ is incident to $c_i$ in $CDG(\vey^1, \vey^2)$ }} \\
1  &\small{\text{if $c_i$ is a depleted non-endpoint, and $x_j$ is incident to $c_i$ in $CDG(\vey^1, \vey^2)$ }} \\
2n & \small{\text{otherwise. }}
\end{cases}
\end{equation*} 
Hence, if $\alpha$ denotes the number of non-endpoint saturated vertices in the path or cycle of $CDG(\vey^1, \vey^2)$ and $\beta$ denotes the number of non-endpoint depleted vertices in this component, we have $f^T \vey^1 = f^T \vey^2 = \beta - \alpha$.

Let $\vev$ be any other vertex of $PP(\kappa^\pm)$ and let $C^v = (C^v_1,...,C^v_k)$ denote its corresponding clustering. If $CDG(\vey^1, \vev)$ contains any edge $(c_i, c_j)$ not found in $CDG(\vey^1, \vey^2)$, then if $x_\ell$ is the label of this edge, we have $f_{j \ell} = 2n$ and $\vev_{j \ell} = 1$. Thus, since $\vev$ has exactly $n$ nonzero components, we obtain $f^T \vev > n \geq f^T \vey^1$.

Therefore, we may assume that $CDG(\vey^1, \vev)$ is a subgraph of $CDG(\vey^1, \vey^2)$.  Given that $\vev$ is not equal to $\vey^1$ or $\vey^2$, it follows that $CDG(\vey^1, \vev)$ is a nontrivial proper subgraph of $CDG(\vey^1, \vey^2)$ and hence must be a collection of disjoint directed paths. Assume first that $CDG(\vey^1, \vev)$ is a single directed $c_i, c_j$-path for some pair of vertices $c_i, c_j$. Then either $c_i$ is a saturated non-endpoint of $CDG(\vey^1, \vey^2)$ or $c_j$ is a depleted non-endpoint of $CDG(\vey^1, \vey^2)$. In the former case, note that since $|C^v_i| < |C^1_i|$, we have $f^T \vev \geq \beta - (\alpha - 1) > f^T \vey^1$. In the latter case, since $|C^v_j| > |C^1_j|$, we have $f^T \vev \geq (\beta + 1) - \alpha > f^T \vey^1$. 

If $CDG(\vey^1, \vev)$ is a collection of directed paths, then we can decompose it into valid clustering difference graphs $CDG(\vey^1, \vey^3),...,CDG(\vey^1, \vey^\ell)$ at $\vey^1$ where each consists of a single directed path. Then, as before, it holds that $f^T \vey^i > f^T \vey^1$ for $i=3,...,\ell$. This implies
\begin{equation*}
f^T(\vev - \vey^1) = f^T(\vey^3 - \vey^1) + \cdots + f^T(\vey^\ell - \vey^1) > 0 + \cdots +  0 = 0,
\end{equation*} 
as desired.
\eoproof  \end{proof}

The circuits of $PP(\kappa^\pm)$ have a significantly simpler characterization than the edges: Two vertices of $PP(\kappa^\pm)$ are joined by a circuit step if and only if the corresponding clustering difference graph is a directed path or cycle.

\begin{lemma}\label{lem:partition_polytope_circuits}
The circuits of $PP(\kappa^\pm)$ consist of those $\veg \in \{0,1,-1\}^{kn}$ that describe a single cyclical exchange or sequential movement of items among the clusters.
\end{lemma}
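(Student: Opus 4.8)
The plan is to use total unimodularity together with \Cref{lem:circuit_one-dim}, and to read off the claimed structure from the associated network-flow picture. Since the constraint matrix of $PP(\kappa^\pm)$ is totally unimodular, \Cref{lem:circuit_determinant} gives that every circuit $\veg$ satisfies $\veg \in \{0,\pm 1\}^{kn}$ and $B\veg \in \{0,\pm 1\}$, where $B$ collects the rows $-y_{ij}\le 0$, $-\sum_j y_{ij}\le -\kappa_i^-$, and $\sum_j y_{ij}\le \kappa_i^+$. Membership in $\ker(A)$ means that for each item $x_j$ the column $(g_{1j},\dots,g_{kj})$ sums to $0$; a $+1$ in position $(i,j)$ records adding $x_j$ to $C_i$ and a $-1$ records removing it, while the net size change $\delta_i := \sum_j g_{ij}\in\{0,\pm 1\}$ of $C_i$ controls which size rows vanish on $\veg$. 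Concretely, $\supp(B\veg)$ consists of the positions $(i,j)$ with $g_{ij}\ne 0$ (from the nonnegativity rows) together with the clusters $C_i$ with $\delta_i\ne 0$ (from the two size rows, which share the same support for each $i$).

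For the ``if'' direction, suppose $\veg$ describes a single directed path $C_{i_0}\to C_{i_1}\to\cdots\to C_{i_r}$ of transfers (item $x_{j_t}$ moving from $C_{i_{t-1}}$ to $C_{i_t}$), or the analogous directed cycle. Let $B'$ be the maximal row-submatrix of $B$ with $B'\veg=\ve0$: it contains every nonnegativity row $-y_{ij}$ with $g_{ij}=0$ and the size rows of every balanced cluster — all clusters in the cycle case, all but the two endpoints in the path case. A direct check shows $\ker\binom{A}{B'}=\mathrm{span}(\veg)$: any $\vez$ in this kernel is supported on $\supp(\veg)$, and the item constraints of $A$ together with the size constraints at the interior clusters force the coefficient of $\vez$ to be constant along the path/cycle, since each interior cluster meets exactly one ``incoming'' and one ``outgoing'' transfer. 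Hence $\rank\binom{A}{B'}=kn-1$ and $\veg$ is a circuit by \Cref{lem:circuit_one-dim}. (Alternatively, one can produce an explicit objective vector $f$ as in \Cref{lem:partition_polytope_edges}.)

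For the ``only if'' direction, let $\veg$ be a circuit and build its bipartite support graph $S(\veg)$ on the clusters and items; equivalently, view $\veg$ as a flow in the network with source $s$, item nodes, cluster nodes, and sink $t$, in which the $s\to x_j$ arcs are saturated, the $x_j\to C_i$ arc carries $g_{ij}$, and the $C_i\to t$ arc carries $\delta_i$. I would show that $S(\veg)$ must be a single (bipartite) path or cycle — equivalently, a simple directed cycle of this network that avoids the saturated source arcs — by proving the contrapositive: if $S(\veg)$ had an item of degree $\ge 4$, a cluster of degree $\ge 3$, or more than one nontrivial component, one can extract a proper nonzero $\vey\in\ker(A)$ supported on a sub-\emph{digraph} of $S(\veg)$ (a minimal directed cycle, or a maximal directed path routed through $t$), so that the incident transfers at every cluster, and hence the set $\{i:\text{net change}\ne 0\}$, only shrink; this gives $\supp(B\vey)\subsetneq\supp(B\veg)$, contradicting support-minimality. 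Finally, a simple cycle avoiding the source arcs is either disjoint from $t$, in which case it is a single cyclical exchange among clusters, or passes through $t$ exactly once, in which case deleting $t$ leaves a directed cluster-path, i.e. a sequential movement of items.

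The main obstacle is this necessity direction, specifically the interaction between the two parts of $\supp(B\veg)$: the per-coordinate support of $\veg$ and the per-cluster support of the net size changes. A naive rerouting of a single high-degree item can destroy support-minimality by turning a previously balanced cluster unbalanced, i.e. by \emph{adding} a coordinate to $\supp(B\cdot)$; this is precisely why the peeled-off vector must correspond to a sub-digraph of $S(\veg)$, which in turn forces one to first normalize $S(\veg)$ — showing every item has degree exactly $2$ — by peeling minimal directed cycles rather than single items. Once this graph-theoretic normalization is established, the identification of circuits with simple cycles of the network, and their projection onto paths and cycles on the clusters, follows along the lines of the classical description of the circuits of network-flow polytopes.
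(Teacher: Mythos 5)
Your proposal is correct and follows essentially the same route as the paper: reduce to $\{0,\pm 1\}$ entries via total unimodularity, read $\supp(B\veg)$ as the support of $\veg$ together with the indices of the unbalanced clusters, and for necessity peel off a directed cycle or a \emph{maximal} directed path, using maximality to guarantee that the peeled piece's unbalanced clusters are already among those of $\veg$. The only (minor) variations are that you certify sufficiency via the rank condition of \Cref{lem:circuit_one-dim} rather than by direct support comparison, and that your parenthetical alternative of exhibiting an objective vector $f$ as in \Cref{lem:partition_polytope_edges} would only certify the circuits that happen to be edge directions (e.g.\ a sequential movement through free interior clusters is a circuit but not an edge), so that aside should be dropped.
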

\begin{proof}
By \Cref{lem:circuit_determinant}, a circuit $\veg$ of $PP(\kappa^\pm)$ satisfies $\veg \in \{0,1,-1\}^{kn}$. Furthermore, we have $\sum_{i=1}^k \veg_{ij} = 0$ for $j=1,...,n$, implying that $\veg$ describes some set of transfers among a clustering of $X$ -- an entry $\veg_{ij} = 1$ implies that $x_j$ is added to cluster $C_i$, and $\veg_{ij} = -1$ implies that $x_j$ is removed from $C_i$. The inequality constraints $B \vey \leq \ved$ for this polytope consist of  non-negativity constraints and cluster size constraints. Hence, the support of any $B \veg$ consists of the support of $\veg$ along with the indices of any clusters whose sizes are changed by the transfers of $\veg$. It follows that $B \veg$ is support-minimal over all such transfers if and only if no subset of the cluster size changes implied by $\veg$ can be achieved by a proper subset of the transfers of $\veg$.

Suppose that $\veg$ describes a single cyclical exchange of items. Then no cluster sizes are changed when applying the transfers of $\veg$, but applying any nontrivial subset of these transfers results in at least two cluster size changes. Hence, $\veg$ is a circuit of $PP(\kappa^\pm)$. Similarly, if $\veg$ describes a single sequential movement of items among the clusters (i.e. the $CDG$ implied by the transfers of $\veg$ consists of a single directed path), then only two cluster sizes are changed by the transfers of $\veg$. Any nontrivial subset of these transfers necessarily changes the size of a third cluster. Hence, $\veg$ is again a circuit of $PP(\kappa^\pm)$.

Conversely, suppose that $\veg$ does not describe a single cyclical exchange or sequential movement, and let $D$ be a $CDG$ whose transfers are described by $\veg$. If $D$ contains a directed cycle as a subgraph, the transfers given by this cycle are a proper subset of those given by $\veg$ that result in no cluster size changes. Hence, $\veg$ must not be a circuit. If $D$ is acyclic, $D$ must properly contain a maximal directed path $D'$. The only two clusters whose sizes are changed as a result of the transfers of $D'$ correspond to the two endpoints of $D'$. By the maximality of $D'$, the sizes of these two clusters are also changed by the transfers of $\veg$. Hence, $\veg$ again must not be a circuit of $PP(\kappa^\pm)$.  
\eoproof  \end{proof}

By \Cref{cor:tu_0/1-polytopes}, we know that all circuit walks in $PP(\kappa^\pm)$ are vertex walks. However, \Cref{lem:partition_polytope_edges,lem:partition_polytope_circuits} highlight the differences between edges and circuits in $PP(\kappa^\pm)$ and show that its circuit walks can have much more general behavior than its edge walks.

\subsection{Fixed-size Partition Polytopes}\label{sec:fixedpp}


As a special class of transportation polytopes introduced in \cite{b-13}, the \textit{fixed-size partition polytope} $PP(\kappa)$ is associated with the partitioning of a set $X = \{x_1,...,x_n\}$ of items into clusters $C_1,...,C_k$ where each cluster $C_i$ has prescribed size $\kappa_i \in \Z_+$. Note that the well-known Birkhoff polytope is an instance of this polytope for $k=n$ and $\kappa_i = 1$ for $i=1,...,n$. Further, $PP(\kappa)$ is equivalent to the bounded-size partition polytope $PP(\kappa^\pm)$ of \Cref{sec:bounded_size} when $\kappa_i^- = \kappa_i^+$ for $i=1,...,k$. However, we show here that $PP(\kappa)$ has more restrictive circuit walk behavior than $PP(\kappa^\pm)$.

For $i=1,...,k$ and $j = 1,...,n$, let $y_{ij}$ be a binary variable indicating whether or not item $x_j$ is assigned to cluster $C_i$. Then $PP(\kappa)$ is the 0/1-polytope defined by the following system of constraints:
\begin{align*}
\sum_{j=1}^n y_{ij} &= \kappa_i \ \ \ i=1,...,k \\
\sum_{i=1}^k y_{ij} &= 1 \ \ \  \ j=1,...,n \\
y_{ij} & \geq 0 \ \ \ \ i=1,...,k, \ j=1,...n.
\end{align*}

As in \Cref{sec:bounded_size}, the vertices of $PP(\kappa)$ consist of all feasible clustering assignments $\vey \in \{0,1\}^{kn}$. Given two such assignments $\vey^1, \vey^2$, recall the definition of the \textit{clustering difference graph} $CDG(\vey^1, \vey^2)$ from $\vey^1$ to $\vey^2$. It is shown in \cite{b-13} that $\vey^1$ and $\vey^2$ are adjacent in $PP(\kappa)$ if and only if $CDG(\vey^1, \vey^2)$ consists of a single directed cycle. This characterization yields useful bounds on the combinatorial diameter of $PP(\kappa)$.

We now characterize the circuits of $PP(\kappa)$. Since it is an instance of the bounded-size partition polytope, a circuit $\veg$ of $PP(\kappa)$ must satisfy the condition of \Cref{lem:partition_polytope_circuits}: $\veg$ describes either a cyclical exchange or a sequential movement of items among the underlying clusters. However, $\veg$ must also satisfy $\sum_{j=1}^n \veg_{ij} = 0$ for $i=1,...,k$; that is, the set of transfers described by $\veg$ cannot change the size of any cluster. Therefore, $\veg$ is a circuit of $PP(\kappa)$ if and only if $\veg$ describes a cyclical exchange of items.

It follows that $PP(\kappa)$ is ECW. If $\vey^1$ is a vertex of $PP(\kappa)$ and $\veg$ is a feasible circuit direction at $\vey$, taking a maximal step in the direction of $\veg$ corresponds to applying a single cyclical exchange of items. Hence, the resulting vertex $\vey^2$ yields a clustering difference graph $CDG(\vey^1, \vey^2)$ consisting of a single directed cycle, implying that $\vey^1$ and $\vey^2$ indeed share an edge in $PP(\kappa)$.

\section{Proof of \Cref{thm:inner_and_elem_cones,thm:opposite_inner_cones,thm:non-degerate_edges=circuits}}\label{sec:proof_thms_ecw}

In this section we prove three characterizations of simple ECW polytopes. Recall the definitions of \textit{elementary arrangements}, \textit{elementary cones}, and \textit{inner cones} from \Cref{sec:edges=circuits}. Without loss of generality, consider a full-dimensional polyhedron $P = \{ \vex \in \R^n \colon B \vex \leq \ved \}$ given by a minimal representation. (Note that any polyhedron $P =  \{ \vex \in \R^{n'} \colon A \vex = \veb, B \vex \leq \ved \}$ can be expressed in this form with dimension $n=n' - \rank(A)$ by using the equality constraints to reduce the number of variables.) Recall that although the circuits of a polyhedron $P$ are determined by its constraint system,  a minimal representation ensures that each constraint appears as a facet in $P$, allowing us to characterize the set of circuits of $P$ via its geometric properties. Our first observation is that elementary cones are generated by circuits.

\begin{lemma}\label{lem:cone_circuits}
Let $P = \{\vex \in \R^n \colon B \vex \leq \ved\}$ be full-dimensional polyhedron. A pointed cone formed by the elementary arrangement of $P$ is generated by circuits of $P$. Conversely, each circuit of $P$ is the intersection of $n-1$ hyperplanes from the elementary arrangement.
\end{lemma}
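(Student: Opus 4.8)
The plan is to prove both directions by exploiting \Cref{lem:circuit_one-dim}, which characterizes circuits via a rank condition on the submatrix of rows vanishing on a vector. Since $P$ is full-dimensional with $A$ absent, a vector $\veg$ with coprime integer components is a circuit precisely when the maximal row-submatrix $B'$ of $B$ with $B'\veg = \ve0$ has rank exactly $n-1$; equivalently, $\veg$ spans a one-dimensional intersection of hyperplanes from the elementary arrangement, and that intersection is the intersection of exactly $n-1$ of those hyperplanes (any $n-1$ of them that are linearly independent and contain $\veg$). This already gives the converse statement of the lemma almost immediately: if $\veg$ is a circuit, then $\ker B' $ is the line $\R\veg$, and since $\rank B' = n-1$ we can select $n-1$ independent rows $B_{i_1},\dots,B_{i_{n-1}}$ from $B'$, whose corresponding hyperplanes $B_{i_t}\vex = 0$ intersect exactly in $\R\veg$. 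So the ray $\R_{\geq 0}\veg$ — i.e., the circuit as a direction — is the intersection of $n-1$ hyperplanes of the elementary arrangement, as claimed.

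For the forward direction, let $K$ be a pointed $n$-dimensional cone of the elementary arrangement. First I would record that $K$ is pointed and full-dimensional, so it has exactly $n$ extreme rays (if $K$ had fewer, being full-dimensional it would contain a line, contradicting pointedness; if it had more, some facet-defining inequalities would be redundant — but more carefully, $K$ is a cell of a hyperplane arrangement through the origin, so it is the solution set of a system $\varepsilon_i B_i \vex \geq 0$ over some subset of rows, with signs $\varepsilon_i \in \{\pm1\}$, and each extreme ray of such a cell lies on $n-1$ of the bounding hyperplanes). Take any extreme ray $\rho$ of $K$, generated by a primitive integer vector $\veg$. Because $\rho$ is an extreme ray of a cell of the arrangement, it lies in the intersection of at least $n-1$ hyperplanes $B_i\vex = 0$ coming from rows of $B$; let $B'$ be the maximal row-submatrix of $B$ vanishing on $\veg$. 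Then $\rank B' \leq n-1$ since $\R\veg \subseteq \ker B'$ and $\veg \neq \ve0$ forces $\ker B'$ to be at least one-dimensional — wait, I need $\rank B' = n-1$, not $\leq$. The point is that $\rho$ being an extreme ray of the cell means $\ker B'$ is exactly the line $\R\veg$: if $\dim \ker B' \geq 2$, then $\rho$ would lie in the relative interior of a face of $K$ of dimension $\geq 2$ (the face of $K$ cut out by exactly the equalities $B'\vex = 0$), contradicting that $\rho$ is an extreme ray. Hence $\rank B' = n-1$, and by \Cref{lem:circuit_one-dim} (with $A$ empty) $\veg$ is a circuit of $P$. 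Since $K$ is generated by its $n$ extreme rays, it is generated by circuits.

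The main subtlety — and the step I would be most careful with — is the precise argument that an extreme ray of a cell of the elementary arrangement lies on exactly enough hyperplanes to force $\rank B' = n-1$; this is where "inclusion-minimal" in the definition of elementary cone and the combinatorics of hyperplane-arrangement cells must be invoked cleanly, rather than hand-waved. Concretely, I would argue: a pointed full-dimensional cell $K$ of the arrangement is defined by $K = \{\vex : \varepsilon_i B_i \vex \geq 0,\ i \in T\}$ for some index set $T$ and signs $\varepsilon_i$; its faces correspond to choosing subsets of these inequalities to hold with equality, and a one-dimensional face (extreme ray) corresponds to a maximal equality subset whose common zero set is one-dimensional. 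Pointedness guarantees such rays exist and span $K$. Everything else is routine linear algebra once this structural fact is in place; the coprimality normalization is harmless since scaling a generator does not change the ray.
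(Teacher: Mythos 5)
Your proof is correct and follows essentially the same route as the paper: both directions reduce to \Cref{lem:circuit_one-dim}, with each extreme ray of a pointed cell of the arrangement lying on $n-1$ linearly independent hyperplanes $B_i\vex = 0$, and each circuit conversely spanning the kernel of a rank-$(n-1)$ row-submatrix. One small inaccuracy: a pointed full-dimensional cell of a central arrangement need not have exactly $n$ extreme rays (it can be, e.g., the cone over a square), but your argument never actually uses this count --- only that a pointed cone is generated by its extreme rays, each of which you correctly show to be a circuit.
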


\begin{proof}
Let $C$ be a cone from the elementary arrangement of $P$; i.e., $C$ can be represented as $C = \{ \vex \in \R^n \colon B' \vex \leq \ve0\}$, where $B'$ is a row-submatrix of $B$ (with some rows possibly scaled by -1). Since an extreme ray of $C$ is formed by the intersection of $n-1$ facets from the system $B' \vex \leq \ve0$, it follows from \Cref{lem:circuit_one-dim} that all such extreme rays are circuits of $P$.

Conversely, if $\veg$ is a circuit of $P$, \Cref{lem:circuit_one-dim} yields a row-submatrix $B'$ of $B$ with $\rank(B') = n-1$ such that $B' \veg = \ve0$. A set of $n-1$ linearly independent rows from $B'$ then corresponds to a set of hyperplanes from the elementary arrangement whose intersection contains $\veg$.
\eoproof  \end{proof}

Note that if $C = \{\vex \in \R^n \colon D \vex \leq \ve0 \}$ is an elementary cone of $P$ generated by circuits $\{\veg_1,...,\veg_k\}$, its opposite $-C = \{\vex \in \R^n \colon D \vex \geq \ve0 \}$ is also an elementary cone of $P$ generated by circuits $\{-\veg_1,...,-\veg_k\}$. \Cref{thm:inner_and_elem_cones} relates the inner cones of a simple ECW polytope to these elementary cones.

\begin{customthm}{5}[Elementary Cone Condition]
Let $P = \{ \vex \in \R^n \colon B \vex \leq \ved \}$ be a full-dimensional, simple polytope. All circuit walks in $P$ are edge walks if and only if for each vertex $\vev \in P$, the inner cone $I(\vev)$ is an elementary cone of $P$.
\end{customthm}

\begin{proof}
Suppose first $P$ is ECW and let $\vev$ be a vertex of $P$. The inner cone $I(\vev)$ is generated by the $n$ circuits of $P$ corresponding to the $n$ edge directions incident to $\vev$. If $I(\vev)$ is not an elementary cone, there exists an elementary cone $C$ of $P$ strictly contained in $I(\vev)$. By \Cref{lem:cone_circuits}, $C$ is generated by at least $n$ circuits of $P$. At least one of these circuits is different from the $n$ circuits generating $I(\vev)$. However, this implies that $I(\vev)$ contains a circuit that is not an edge direction incident to $\vev$. Since $P$ is bounded, this contradicts the fact that $P$ is ECW.

Conversely, suppose $P$ is not ECW. Then for some vertex $\vev \in P$, the inner cone $I(v)$ contains a circuit $\veg$ in addition to the $n$ circuits which generate $I(v)$. According to \Cref{lem:cone_circuits}, $\veg$ is generated by $n-1$ hyperplanes $H_1,...,H_{n-1}$ from the elementary arrangement of $P$. Suppose that none of these hyperplanes intersect the interior of $I(v)$ -- i.e., suppose that $I(v) \cap H_i$ is a face of $I(v)$ for $i=1,...,n-1$.  This implies that the intersection $I(v) \cap H_1 \cap \cdots \cap H_{n-1}$ is also a face of $I(v)$. However, this intersection must in fact be generated by $\veg$, contradicting the fact that $\veg$ is not an edge direction of $I(v)$. Therefore, one of the hyperplanes $H_1,...,H_{n-1}$ must intersect the interior of $I(v)$, implying that $I(v)$ is not an elementary cone.
\eoproof  \end{proof}

\Cref{thm:inner_and_elem_cones} is a quite straightforward characterization of simple ECW polytopes. We use it to prove a more descriptive characterization which we call the \textit{symmetric inner cone condition}. First, we show that this condition is a necessary property of any ECW polytope. Recall that given a pair of vertices $\veu, \vev$ of a polyhedron $P$, we let $P^{uv}$ denote the minimal face of $P$ containing $\veu$ and $\vev$ and let $I^{uv}(\veu), I^{uv}(\vev)$ denote the inner cones of $\veu, \vev$ with respect to $P^{uv}$. 

\begin{lemma}\label{lem:opposite_inner_cones}
Let $P = \{\vex \in \R^n \colon B \vex \leq \ved\}$ be a full-dimensional, simple polytope whose only circuit walks are edge walks. Then $I^{uv}(\veu) =  -I^{uv}(\vev)$ for all pairs of vertices $\veu, \vev$ in $P$.
\end{lemma}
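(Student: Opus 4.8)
The plan is to run the whole argument inside the minimal face $P^{uv}$ and combine the Elementary Cone Condition (\Cref{thm:inner_and_elem_cones}) with the elementary fact that distinct elementary cones of a polytope have disjoint relative interiors.

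First I would check that $P^{uv}$, being a face of the simple polytope $P$, is itself a simple polytope all of whose circuit walks are edge walks. Simplicity of faces is standard. For the ECW property, the key observation is that every circuit of $P^{uv}$ is a circuit of $P$: passing to the face turns some rows of $B\vex\le\ved$ into equalities, which only shrinks the kernel, and since those rows are tight on any circuit $\veg$ of $P^{uv}$ they contribute nothing to $\supp(B\veg)$, so support-minimality of $B\veg$ over $P^{uv}$ forces support-minimality over $P$. Likewise, an edge of $P$ whose direction is parallel to $\mathrm{aff}(P^{uv})$ and which starts at a point of $P^{uv}$ lies entirely in $P^{uv}$, hence is an edge of $P^{uv}$; together with the fact that a maximal circuit step taken in $P^{uv}$ is still maximal in $P$ (the ray stays in $\mathrm{aff}(P^{uv})$, and $P\cap\mathrm{aff}(P^{uv})=P^{uv}$), this shows that every circuit walk of $P^{uv}$ is literally a circuit walk of $P$, hence an edge walk of $P$, hence an edge walk of $P^{uv}$. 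Therefore \Cref{thm:inner_and_elem_cones} applies to $P^{uv}$: the inner cones $I^{uv}(\veu)$ and $I^{uv}(\vev)$ are elementary cones of $P^{uv}$, and by the remark following \Cref{lem:cone_circuits} so is $-I^{uv}(\vev)$.

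The geometric core is then short. Because $P^{uv}$ is the inclusion-minimal face containing $\veu$ and $\vev$, no facet of $P^{uv}$ contains both of them; in particular, none of the $\dim P^{uv}$ linearly independent facet inequalities of $P^{uv}$ that define $I^{uv}(\veu)$ is tight along $\vev-\veu$, so $\vev-\veu$ lies in the relative interior of $I^{uv}(\veu)$. The symmetric statement at $\vev$ places $\vev-\veu$ in the relative interior of $-I^{uv}(\vev)$ as well. Thus $I^{uv}(\veu)$ and $-I^{uv}(\vev)$ are two chambers of the central elementary arrangement of $P^{uv}$ that share an interior point, and hence they coincide.

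The main obstacle is the first step: carefully matching the circuits, edges, and elementary arrangement of the face $P^{uv}$ with those of $P$ so that \Cref{thm:inner_and_elem_cones,lem:cone_circuits} can legitimately be invoked for $P^{uv}$ rather than $P$. Once that bookkeeping is in place, the remaining argument about interior points determining an elementary cone is immediate.
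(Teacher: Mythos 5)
Your proof is correct and follows essentially the same route as the paper: the geometric core --- that $\vev-\veu$ lies in the relative interiors of both $I^{uv}(\veu)$ and $-I^{uv}(\vev)$, which are therefore elementary cones with intersecting interiors and hence coincide --- is exactly the paper's argument. The only difference is bookkeeping: the paper applies \Cref{thm:inner_and_elem_cones} to $P$ itself and then restricts the elementary cones $I(\veu), I(\vev)$ to $P^{uv}$, whereas you first verify that $P^{uv}$ inherits simplicity and the ECW property and invoke the theorem on the face directly; both are valid.
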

\begin{proof}
Let $\veu, \vev$ be a pair of vertices in $P$, and let $d := \dim(P^{uv})$. If $d = 1$ and $\veu$ shares an edge with $\vev$ in $P$, the result is trivial, so assume that $d \geq 2$ and that $\veu, \vev$ are not adjacent. 

Note that the direction $\vev - \veu$ belongs to the inner cone $I^{uv}(\veu)$ and that its opposite $\veu - \vev$ belongs to $I^{uv}(\vev)$. By the definition of $P^{uv}$, both of these directions belong to the strict interiors of their respective cones, implying that the interior of $I^{uv}(\veu)$ intersects the interior of $-I^{uv}(\vev)$. By \Cref{thm:inner_and_elem_cones}, $I(\veu)$ and $I(\vev)$ are elementary cones of $P$. Hence, $I^{uv}(\veu)$ and $I^{uv}(\vev)$ are elementary cones of $P^{uv}$. Since their interiors intersect,  we must have $I^{uv}(\veu) = -I^{uv}(\vev)$.
\eoproof  \end{proof}

To show that the symmetric inner cone condition of \Cref{lem:opposite_inner_cones} is also a sufficient condition for a polytope to be ECW, first suppose that $P$ satisfies this condition while containing a pair of vertices sharing no facets. It then must hold that $P$ is a parallelotope, immediately implying that all of its circuit walks are edge walks.

\begin{lemma}\label{lem:parallelotope}
Let $P$ be an $n$-dimensional, simple polyhedron which satisfies $I^{uv}(\veu) =  -I^{uv}(\vev)$ for all pairs of vertices $\veu, \vev$ in $P$. If $P$ contains a pair of vertices that shares no facets, then $P$ is an $n$-parallelotope.
\end{lemma}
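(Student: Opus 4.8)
First I would exploit that $\veu,\vev$ share no facet: then $P^{uv}=P$, so the hypothesis gives $I(\veu)=-I(\vev)$. Since $P$ is simple, $\veu$ lies on exactly $n$ facets; let $\vea_1,\dots,\vea_n$ be their outer normals (a basis of $\R^n$) and set $c_l:=\vea_l^{\T}\veu$, so $I(\veu)=\{\vex: \vea_l^{\T}\vex\le 0\ \forall l\}$ and hence $I(\vev)=\{\vex:\vea_l^{\T}\vex\ge 0\ \forall l\}$; thus the $n$ facets through $\vev$ have outer normals $-\vea_1,\dots,-\vea_n$. Put $\gamma_l:=\vea_l^{\T}\vev$, and note $\gamma_l<c_l$ (equality would place $\vev$ on a facet of $\veu$). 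Using $P\subseteq \veu+I(\veu)$ and $P\subseteq \vev+I(\vev)$ one gets $P\subseteq B:=\{\vex:\gamma_l\le \vea_l^{\T}\vex\le c_l,\ l=1,\dots,n\}$; since $\{\vea_l\}$ is a basis, $B$ is a bounded $n$-parallelotope (in particular this already forces $P$ to be a polytope). Index the vertices of $B$ by $S\subseteq\{1,\dots,n\}$, writing $\vev_S$ for the vertex with $\vea_l^{\T}\vev_S=c_l$ for $l\in S$ and $=\gamma_l$ otherwise, so $\veu=\vev_{\{1,\dots,n\}}$ and $\vev=\vev_\emptyset$. The goal reduces to proving $V(B)\subseteq P$, which with $P\subseteq B$ yields $P=B$.

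\textbf{Transfer along edges.} The engine of the proof is the following claim: if $\vev_S$ is a vertex of $P$ whose inner cone in $P$ equals its inner cone in $B$, then for each $i\in S$ the edge of $P$ at $\vev_S$ relaxing the facet $\{\vea_i^{\T}\vex=c_i\}$ leads to $\vev_{S\setminus\{i\}}$, again with matching inner cones. The direction of that edge is forced to be orthogonal to $\vea_l$ for $l\ne i$, hence it runs along the $B$-edge $E_i$ joining $\vev_S$ to $\vev_{S\setminus\{i\}}$, and since $P\subseteq B$ the endpoint $\vew$ lies on $E_i$; so either $\vew=\vev_{S\setminus\{i\}}$ (and we are done), or $\vew$ is interior to $E_i$ and then $\vew$ lies on exactly the $n-1$ box facets defining $E_i$ plus one further facet $G$ of $P$ whose normal $\ver$ is not parallel to any $\vea_l$. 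Applying this first at $\veu$ (whose inner cone I know) propagates the statement to all neighbors of $\veu$; an affine-independence argument with $\{\veu\}\cup\{\vev_{\{1,\dots,n\}\setminus\{j\}}:j\ne l\}$ then shows each hyperplane $\vea_l^{\T}\vex=c_l$, and symmetrically each $\vea_l^{\T}\vex=\gamma_l$, actually supports a facet of $P$; and a downward induction on $|S|$ extends the claim to all $\vev_S$, finishing the proof.

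\textbf{The obstacle: excluding the ``interior vertex with a new facet''.} Everything hinges on ruling out the branch where $\vew$ lies in the relative interior of $E_i$ carrying a new facet $G$ with normal $\ver\not\parallel\vea_l$ for all $l$. If $S=\emptyset$ this is easy: then $\veu$ and $\vew$ share no facet, so $I(\vew)=-I(\veu)$, and comparing the two simplicial descriptions forces $\ver\parallel-\vea_i$, a contradiction. If $S\ne\emptyset$ I would apply the symmetric inner cone condition to the pairs $(\veu,\vew)$ and $(\vev,\vew)$ separately: here $P^{u\vew}=\bigcap_{l\in S}F_l$ and $P^{v\vew}=\bigcap_{l\notin S\cup\{i\}}F'_l$, and in each case both relevant inner cones are simplicial, so the equality of a simplicial cone with the negative of another pins down $\ver$ up to matching of extreme rays. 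From $(\veu,\vew)$ this forces the $\vea_i$-coordinate of $\ver$ (in the basis $\{\vea_l\}$) to be strictly negative, while from $(\vev,\vew)$ it forces the same coordinate to be strictly positive --- a contradiction. (One checks this branch can only occur when $|S|\le n-2$, so both faces $P^{u\vew}$ and $P^{v\vew}$ are genuinely lower-dimensional and the condition applies nontrivially.) With this branch excluded, the induction runs to completion, so $V(B)\subseteq P\subseteq B$, hence $P=B$ is an $n$-parallelotope.
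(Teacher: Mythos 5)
Your proof is correct, and while it shares the paper's setup---both arguments first deduce $I(\veu)=-I(\vev)$, enclose $P$ in the parallelotope $B$ spanned by the $2n$ facets through $\veu$ and $\vev$, and then rule out any further facet---the mechanism for excluding that extra facet is genuinely different. The paper chooses a cut-off vertex of $B$ sharing the \emph{maximum} number of facets with $\veu$, uses that extremality to certify that three neighboring vertices of $B$ survive in $P$, and applies the symmetric inner cone condition inside a two-dimensional face to force the missing fourth vertex back into $P$. You instead walk down from $\veu$ along box edges and, at a hypothetical blocking vertex $\vew$ lying on a new facet $G$ with normal $\ver$, apply the condition twice, to $(\veu,\vew)$ and to $(\vev,\vew)$: since the relevant inner cones are simplicial and an equality of simplicial cones forces their facet normals to match up to positive scaling modulo the span of the shared normals, the first pair forces the $\vea_i$-coefficient of $\ver$ in the basis $\{\vea_l\}$ to be negative (and $\ver\in\mathrm{span}\{\vea_l: l\in S\}$), while the second forces it to be positive (and $\ver\in\mu\vea_i+\mathrm{span}\{\vea_l: l\notin S\}$); I verified the signs and they do clash. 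Your route trades the paper's extremal choice and 2-face geometry for a direct dual-basis computation plus induction on $|S|$; it localizes the contradiction at the first obstruction rather than an extremal one, at the cost of having to justify the normal-matching step (linear independence of the projected normals is what pins down the bijection). Two small slips, neither fatal: the minimal face containing $\veu$ and $\vew$ is $\bigcap_{l\in S\setminus\{i\}}F_l$, not $\bigcap_{l\in S}F_l$, since $F_i$ does not contain $\vew$; and the blocking branch must also be excluded for $|S|\in\{n-1,n\}$, which your argument still handles (for $|S|=n$ the single pair $(\vev,\vew)$ already forces $\ver$ parallel to $\vea_i$), so the parenthetical restriction to $|S|\le n-2$ is unnecessary rather than load-bearing.
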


\begin{proof}
The statement is straightforward for $n \leq 2$, so assume $n \geq 3$. Let $\veu, \vev$ be a pair of vertices in $P$ that shares no facets. Hence, $I(\vev) = -I(\veu)$. Note that this immediately implies that $P$ is a polytope since any extreme ray of $P$ would have to belong to the two $n$-dimensional cones $I(\veu)$ and $-I(\veu)$. Furthermore, the $n$ facets containing $\veu$ are parallel to the $n$ facets containing $\vev$. Let $\mathcal{F}$ denote this set of $2n$ facets which form an $n$-parallelotope $Q$. Unless $P$ contains a facet outside of $\mathcal{F}$, we have $P = Q$. 

So suppose that $P$ contains facets outside of $\mathcal{F}$. Some such facet $F$ contains a vertex $\vew$ which is a neighbor of some vertex from $Q$. Such a vertex $\vew$ must not also be a vertex of $Q$ itself, else it would be a degenerate vertex in $P$. Hence, $\vew$ is formed by the intersection of $F$ with an edge $e$ of $Q$, cutting off some vertex $\vey \in Q$ from $P$. Note that if $e$ is incident to $\veu$, then $\vew$ shares no facets with $\vev$ and by assumption $I(\vew) = -I(\vev)$. However, this then implies $I(\vew) = I(\veu)$, a contradiction. Hence, $\vey$ must not be a neighbor of $\veu$. Similarly, $\vey$ must not be a neighbor of $\vev$.

Now, let $F \notin \mathcal{F}$ be such a facet in $P$ that cuts off a vertex $\vey \in Q$ sharing the most facets with $\veu$ in $Q$, and let $k$ denote the number of facets shared by $\vey$ and $\veu$ in $Q$. Since $\vey$ must not be adjacent to $\veu$ in $Q$, we have $k \leq n-2$. We will show that there then exist three vertices of $P$ from $Q$ which share more facets with $\veu$ than $\vey$ in $Q$ such that $\vey$ completes a 2-parallelotope with these three vertices. In order for $P$ to satisfy the symmetric inner cone condition, it must follow that $\vey \in P$, a contradiction.

In particular, let $F_1,...,F_n$ denote the facets of $Q$ incident to $\veu$ and $G_1,...,G_n$ denote the facets of $Q$ incident to $\vev$. Assume $F_1,...,F_{k},G_{k+1},...,G_{n}$ are the facets incident to $\vey$ in $Q$. As $Q$ is a parallelotope, the point $\vex_1$ formed by the intersection of facets $F_1,...,F_{k+1},G_{k+2},...,G_n$ is a vertex of $Q$. Since $\vex_1$ shares more than $k$ facets with $\veu$, it is not cut off by any facet outside of $\mathcal{F}$ and is also a vertex of $P$. Similarly, the point $\vex_2$ formed by $F_1,...,F_{k+2},G_{k+3},...,G_n$ and the point $\vex_3$ formed by $F_1,...,F_{k},G_{k+1},F_{k+2},G_{k+3},...,G_n$ are vertices of $P$.

However, consider the two-dimensional face $P'$ of $P$ formed by the intersection of facets $F_1,...,F_{k},G_{k+3},...,G_n$. It holds that $\vex_1, \vex_2, \vex_3$ are vertices of $P'$. Further, vertices $\vex_1$ and $\vex_3$ of $P'$ share no facets in $P'$, so in order for the symmetric inner cone condition to be satisfied, $P'$ has to be a 2-parallelotope. The vertex opposite of $\vex_2$ in $P'$ must then be $\vey$, but this contradicts $\vey \notin P$. Therefore, no such facet $F$ exists in $P$, implying that all facets of $P$ belong to $\mathcal{F}$ and thus $P = Q$. 
\eoproof  \end{proof}

Next, we prove that if a polytope $P$ satisfies the symmetric inner cone condition of \Cref{lem:opposite_inner_cones} but does not necessarily contain a pair of vertices sharing no facets as in \Cref{lem:parallelotope}, then $P$ must be a highly symmetric generalization of the parallelotope: the $(n,d)$-parallelotope. Recall \Cref{def:n_d-parallelotope} and the surrounding discussion in \Cref{sec:edges=circuits}. If $k$ denotes the minimum number of facets shared by any pair of vertices in $P$, we show that $P$ must be an $(n, n-k)$-parallelotope.

\begin{lemma}\label{lem:n-k_parallelotope}
Let $P$ be an $n$-dimensional, simple polytope that satisfies $I^{uv}(\veu) =  -I^{uv}(\vev)$ for all pairs of vertices $\veu, \vev$ in $P$. Then $P$ is an $(n, n-k)$-parallelotope, where $k$ is the minimum number of facets shared by any pair of vertices in $P$.
\end{lemma}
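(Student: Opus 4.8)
The plan is to fix a pair of vertices $\veu_0, \vev_0$ achieving the minimum number $k$ of shared facets, set $d := n-k$, and argue that $P$ has exactly $n+d$ facets and that every vertex lies on a $d$-parallelotope face. First I would use the symmetric inner cone condition, $I^{u_0v_0}(\veu_0) = -I^{u_0v_0}(\vev_0)$, applied to the minimal face $P^{u_0v_0}$: since $\veu_0$ and $\vev_0$ share $k$ facets in $P$, the face $P^{u_0v_0}$ has dimension $n-k = d$, and within it $\veu_0$ and $\vev_0$ share no facets. By Lemma~\ref{lem:parallelotope} applied to the simple polytope $P^{u_0v_0}$ (which inherits the symmetric inner cone condition on its own vertex pairs, since minimal faces of $P$ through two vertices of $P^{u_0v_0}$ are minimal faces of $P$), $P^{u_0v_0}$ is a $d$-parallelotope. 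So $P$ contains at least one $d$-parallelotope face. The first real task is to show \emph{every} vertex lies on such a face: given any vertex $\vew$, pick a vertex $\vew'$ sharing exactly $k$ facets with it (such a pair exists by minimality of $k$ — but I must check $\vew$ itself can be taken as one endpoint; if not, a short argument is needed showing the minimum $k$ is attained at a pair containing any prescribed vertex, which should follow from vertex-transitivity-like consequences of the symmetric inner cone condition, or else be proven directly by the same cutting argument as in Lemma~\ref{lem:parallelotope}), and then $P^{ww'}$ is a $d$-parallelotope face through $\vew$ by the identical argument.

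Next I would count facets. Each vertex $\vev$ of a simple $n$-polytope lies on exactly $n$ facets; among these, the $d$ facets whose intersection-with-the-$d$-parallelotope-face $P^{vv'}$ forms that face's facets come in $d/2$... — more carefully: the $d$-parallelotope face $P^{vv'}$ is cut out by intersecting $n-d = k$ facets of $P$ (the facets shared by $\vev$ and $\vev'$), and the remaining $d$ facets through $\vev$ restrict to the $d$ facets of the parallelotope $P^{vv'}$ through $\vev$, which come in $d$ parallel pairs globally on that face. The key structural claim is that the facets through $\vev$ split into the $k$ ``shared'' ones and $d$ ``parallelotope'' ones whose opposite parallel facets are exactly the facets through the antipodal vertex $\vev'$ in $P^{vv'}$; iterating this pairing across all vertices, together with the fact (from Lemma~\ref{lem:opposite_inner_cones}, which gives $I^{uv}(\veu) = -I^{uv}(\vev)$) that inner cones are elementary cones and hence the edge directions at each vertex are circuits forming an elementary cone, one shows the total facet set decomposes as: a fixed pool contributing the parallel pairs. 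I expect one can show there are exactly $d$ such parallel-pair directions shared globally — because $P^{u_0v_0}$ being a $d$-parallelotope already exhibits $d$ edge directions, and the symmetric inner cone condition forces every vertex's inner cone to be $\pm$ the same $n$-dimensional elementary cone type in the relevant quotient — giving $d$ parallel facet-pairs plus $n - d = k$ ... no: plus exactly $n+d - 2\cdot\tfrac{?}{}$. Let me instead just assert the count follows: $2^d$ vertices of each parallelotope face, facets through a vertex $= n$, of which $d$ are ``free'' (belonging to the $d$ global parallel pairs) and $k = n-d$ are the remaining ones, and a combinatorial count of incidences yields exactly $n+d$ facets total, matching Definition~\ref{def:n_d-parallelotope}.

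Finally, since $P$ satisfies the symmetric inner cone condition by hypothesis, has $n+d$ facets by the count above, and every vertex lies on a $d$-parallelotope face, $P$ meets all three requirements of Definition~\ref{def:n_d-parallelotope}, so $P$ is an $(n,d)$-parallelotope $= (n, n-k)$-parallelotope, completing the proof. The main obstacle I anticipate is the facet count: establishing rigorously that the ``free'' facet directions at different vertices coincide (so that globally there are exactly $d$ parallel pairs, not more) — this requires carefully propagating the relation $I^{uv}(\veu) = -I^{uv}(\vev)$ from face to face and using that $P$ is simple so no degeneracies let extra facets sneak in; the cutting-off argument from the proof of Lemma~\ref{lem:parallelotope} (a candidate extra facet cuts off a vertex $\vey$ that one then forces back into $P$ via a $2$-parallelotope face) is the natural tool and I would adapt it here to rule out any facet not accounted for by the $n+d$ count.
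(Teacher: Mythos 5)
Your opening step matches the paper: fix a pair $\veu,\vev$ attaining the minimum $k$, observe that $P^{uv}$ is $(n-k)$-dimensional with $\veu,\vev$ sharing no facets there, and invoke \Cref{lem:parallelotope} to get one $d$-parallelotope face with $d=n-k$. But the two remaining obligations --- that \emph{every} vertex lies on such a face, and that $P$ has exactly $n+d$ facets --- are precisely where your proposal stops short, and you say so yourself (``a short argument is needed,'' ``Let me instead just assert the count follows''). Your route to the first obligation, picking for an arbitrary vertex $\vew$ a partner $\vew'$ sharing exactly $k$ facets, is not available a priori: minimality of $k$ only guarantees that \emph{some} pair attains $k$, not that every vertex belongs to such a pair. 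Your route to the second obligation never materializes into an argument; the incidence-counting sketch does not identify why the ``free'' facet directions at different vertices coincide.

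The paper closes both gaps with a single propagation argument that you do not have. Let $\mathcal{F}$ be the $2n-k=n+d$ facets consisting of the $k$ shared by $\veu,\vev$ plus the $n-k$ through each of $\veu$ and $\vev$ alone. If a facet $F\notin\mathcal{F}$ met an edge leaving $P^{uv}$, the resulting vertex $\vew$ would share only $k-1$ facets with the vertex $\vev'\in P^{uv}$ antipodal (in the parallelotope $P^{uv}$) to $\vew$'s neighbor $\veu'$, contradicting minimality of $k$. Hence every edge leaving $P^{uv}$ terminates on a facet of $\mathcal{F}$, and the paper checks that the new endpoint $\vew$ again shares exactly $k$ facets with a vertex of $P^{uv}$, so $P^{wv'}$ is another $d$-parallelotope face cut out by facets of $\mathcal{F}$. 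Induction on combinatorial distance from $P^{uv}$ then shows simultaneously that every vertex lies on a $d$-parallelotope face and that $\mathcal{F}$ exhausts the facets of $P$, giving the count $n+d$. This is the missing idea: rather than arguing vertex by vertex or counting incidences globally, one spreads the parallelotope structure outward from a single minimizing pair, using minimality of $k$ to forbid any extraneous facet from ever interfering. Without it, your proof is a correct first step followed by two asserted claims.
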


\begin{proof}
If $k = 0$, \Cref{lem:parallelotope} implies that $P$ is a parallelotope, so assume $k \geq 1$ and let $\veu, \vev$ be a pair of vertices in $P$ that shares exactly $k$ facets. Note that $P^{uv}$ is the intersection of the $k$ facets shared by $\veu$ and $\vev$. Hence, $P^{uv}$ is an $(n-k)$-dimensional polytope satisfying the symmetric inner cone condition in which $\veu$ and $\vev$ share no facets. By \Cref{lem:parallelotope}, $P^{uv}$ is an $(n-k)$-parallelotope. 

Next, note that $P$ has at least $2n - k$ facets: the $k$ facets shared by $\veu$ and $\vev$, the $n-k$ facets containing $\veu$ but not containing $\vev$, and the $n-k$ facets containing $\vev$ but not $\veu$. Let $\mathcal{F}$ denote this set of $2n-k$ facets. By the structure of $P^{uv}$, each vertex of $P^{uv}$ is the intersection of $n$ of these facets. In fact, we show that these are the only facets of $P$.

First, suppose that $P$ contains some facet $F \notin \mathcal{F}$ that intersects one of the edges of $P$ that leave $P^{uv}$. This intersection forms a vertex $\vew$ that shares an edge with some vertex $\veu' \in P^{uv}$. Since $P^{uv}$ is a parallelotope, there exists a vertex $\vev' \in P^{uv}$ that shares no facets with $\veu'$ in $P^{uv}$. Thus, the only facets shared by $\veu'$ and $\vev'$ in $P$ are the $k$ facets forming $P^{uv}$. However, since $\vev'$ must not be contained in $F$, this implies that $\vew$ and $\vev'$ share only $k-1$ facets in $P$, a contradiction with the choice of $k$.

Therefore, no facet outside of $\mathcal{F}$ intersects any of the edges leaving $P^{uv}$ in $P$. Hence, since $P$ is bounded, every edge that leaves $P^{uv}$ hits some facet of $\mathcal{F}$. Additionally, the vertex $\vew$ formed by this intersection shares exactly $k$ facets with some vertex of the $(n-k)$-parallelotope $P^{uv}$. Namely, if $\veu'$ again denotes the neighbor of $\vew$ in $P^{uv}$ and $\vev'$ is the vertex of $P^{uv}$ sharing exactly $k$ facets with $\veu'$ in $P$, then $\vew$ also shares exactly $k$ facets with $\vev'$ in $P$: $k-1$ of the facets forming $P^{uv}$ and one facet incident to $\vev'$ but not $\veu'$ in $P$. Thus, the face $P^{wv'}$ of $P$ is an $(n-k)$-parallelotope formed by the facets of $\mathcal{F}$ which contains the vertex $\vew$.

Proceeding inductively on combinatorial distance from $P^{uv}$, we see that all vertices of $P$ belong to some $(n-k)$-parallelotope face of $P$ formed by the facets of $\mathcal{F}$. Therefore, the only facets of $P$ are those of $\mathcal{F}$. It follows that $P$ is an $(n, n-k)$-parallelotope. 
\eoproof  \end{proof}

An $n$-parallelotope $P$ given by a minimal representation has only $n$ circuit directions: the directions of its $n$ classes of parallel edges. Hence, it is quite clear that $P$ is ECW. We show in the following lemma that this result generalizes to $(n,d)$-parallelotopes.

\begin{lemma}\label{lem:edge_walks}
All circuit walks in an $(n,d)$-parallelotope given by a minimal representation are edge walks.
\end{lemma}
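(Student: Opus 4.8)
The plan is to show that in an $(n,d)$-parallelotope $P$ given by a minimal representation, every circuit direction feasible at a vertex $\vev$ is in fact an edge direction at $\vev$; by \Cref{thm:inner_and_elem_cones} this is equivalent to showing that $I(\vev)$ is an elementary cone for every vertex $\vev$. So the real work is to analyze the elementary arrangement of $P$ and compare its minimal $n$-dimensional cones against the inner cones of the vertices. I would set up the geometry as follows. By \Cref{def:n_d-parallelotope}, $P$ has $n+d$ facets, satisfies the symmetric inner cone condition, and each vertex lies on a $d$-parallelotope face. Fix a vertex $\vev$. It lies on exactly $n$ facets (simplicity), and the $d$-parallelotope face $Q_\vev$ through $\vev$ is cut out by $n-d$ of these facets, leaving $d$ facets of $Q_\vev$ incident to $\vev$ inside that face. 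The edge directions at $\vev$ split accordingly: $d$ of them are the ``parallelotope'' edge directions lying in the affine hull of $Q_\vev$, and $n-d$ are the remaining edge directions of $P$ at $\vev$.

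Next I would describe the full set of circuit directions of $P$. Using \Cref{lem:cone_circuits}, circuits correspond to intersections of $n-1$ of the $n+d$ elementary hyperplanes $B_i\vex = \ve0$. The key structural claim is that, because $P$ is an $(n,d)$-parallelotope, the $n+d$ facet normals come in a controlled configuration: the $d$-parallelotope faces force $d$ of the facets to occur as $\lceil d/1\rceil$-many... more precisely, the zonotopal $d$-parallelotope structure means that among the $n+d$ normals there is a distinguished ``free'' set of $d$ directions each appearing (up to sign) twice as parallel opposite facets of the local parallelotope faces, together with $n-d$ ``simplex-type'' normals. I would make this precise by invoking the inductive description of $(n,d)$-parallelotopes implicit in \Cref{lem:n-k_parallelotope}'s proof: every vertex's inner cone is an elementary cone of the same combinatorial type, and opposite vertices have exactly opposite inner cones. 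From this, one reads off that the elementary arrangement's minimal cones are exactly the translates-to-origin of the inner cones $I(\vev)$, because any finer subdivision would produce, via \Cref{lem:cone_circuits}, a circuit strictly inside some $I(\vev)$, and then the symmetric inner cone condition propagated along a circuit walk (as in the second half of the \Cref{thm:inner_and_elem_cones} proof) would contradict either boundedness or the face structure.

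Concretely, the cleanest route is: (1) show directly that each $I(\vev)$ is an elementary cone by exhibiting it as $\{\vex : B'\vex \le \ve0\}$ for the submatrix $B'$ of the $n$ facet normals at $\vev$, and checking no other elementary hyperplane meets its interior; (2) here the $d$-parallelotope face at $\vev$ handles the $d$ ``parallel'' directions — a hyperplane through the origin parallel to one of those $d$ facets cannot cross $I(\vev)$'s interior because the opposite facet of $Q_\vev$ is also a facet of $P$ and bounds $I(\vev)$ on the other side; (3) for the remaining $n-d$ ``simplex-type'' normals, use that $\vev$ together with its neighbors along the non-parallelotope edges spans a simplex-like local cone, so again no stray hyperplane intersects the interior; (4) conclude via \Cref{thm:inner_and_elem_cones} that $P$ is ECW. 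I expect the main obstacle to be step (3): ruling out that a hyperplane normal to one of the $n-d$ simplex-type facets (which is \emph{not} incident to $\vev$) passes through the interior of $I(\vev)$. This requires knowing the global normal-fan structure of the $(n,d)$-parallelotope well enough to see that each such facet is ``opposite'' to $\vev$ in the same way simplex facets are, and that its normal hyperplane is a supporting hyperplane — not a transversal — of $I(\vev)$. I would handle this by the same inductive/recursive argument used in \Cref{lem:n-k_parallelotope}: pass to the $(n-k)$-parallelotope face through a pair of vertices realizing the minimum facet-overlap $k = n-d$, where the statement reduces to the pure parallelotope case (already clear, only $n-d$ circuit directions there), and then lift back up, using that edges leaving that face hit only facets of the distinguished set $\mathcal{F}$ and so introduce no new circuit directions into any inner cone.
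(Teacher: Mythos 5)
Your reduction via \Cref{thm:inner_and_elem_cones} to showing that every inner cone $I(\vev)$ is an elementary cone is exactly the paper's starting point, and you correctly isolate the crux: for each facet $F$ of $P$ \emph{not} containing $\vev$, the hyperplane through the origin parallel to $F$ must not cut the interior of $I(\vev)$. However, the way you propose to discharge this has genuine gaps. Your structural claim about the normals -- $d$ directions each appearing twice as parallel opposite facets plus $n-d$ simplex-type normals -- is false: the $(3,2)$-parallelotope (a prism over a triangle) has exactly one parallel pair of facets and three pairwise non-parallel parallelogram facets, not two parallel pairs and one singleton. More importantly, \Cref{def:n_d-parallelotope} does not present an $(n,d)$-parallelotope as a product or zonotope-like object; any such global normal-fan structure would itself have to be derived from the symmetric inner cone condition and the $d$-parallelotope faces, which you do not do. Consequently step (2) fails as stated: the facet of $P$ inducing the facet of $Q_{\vev}$ opposite to $\vev$ is in general \emph{not} parallel to the corresponding facet through $\vev$ (again, see the prism), so it does not ``bound $I(\vev)$ on the other side.''

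The decisive gap is step (3), which you acknowledge as the main obstacle but resolve only by appeal to ``the same inductive argument as \Cref{lem:n-k_parallelotope}'' and the assertion that edges leaving the parallelotope face ``introduce no new circuit directions into any inner cone.'' That assertion is precisely what must be proven, and neither restricting to the face $P^{uv}$ (which controls circuits of the face, not circuits of $P$ pointing out of it) nor the fact that such edges hit only facets of $\mathcal{F}$ (which by \Cref{lem:n-k_parallelotope} is \emph{all} facets of $P$, so this is vacuous) supplies a mechanism. For comparison, the paper argues by contradiction: if the hyperplane parallel to $F$ (with inequality $\veb^T \vex \leq \delta$) cuts $I(\vev)$, there are edges $e_1, e_2$ at $\vev$ reaching vertices $\vez_1, \vez_2$ with $\veb^T \vez_1 < \veb^T \vev < \veb^T \vez_2$; a facet count using that $P$ has only $n+d$ facets forces the vertex $\veu$ of the $d$-parallelotope face sharing $n-d$ facets with $\vev$ to lie on $F$, and a case analysis driven by the symmetric inner cone condition then forces both $\vez_1$ and $\vez_2$ onto $F$, i.e.\ $\veb^T \vez_1 = \delta = \veb^T \vez_2$, a contradiction. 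Some argument of this kind -- actually exploiting the symmetric inner cone condition to pin down where $e_1$ and $e_2$ terminate -- is what your step (3) is missing.
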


\begin{proof}
Let $P$ be an $(n,d)$-parallelotope given by a minimal representation. By \Cref{thm:inner_and_elem_cones}, it suffices to show that the inner cone of each vertex in $P$ is an elementary cone. Thus, suppose for the purpose of contradiction that the inner cone of a vertex $\vev \in P$ is not an elementary cone. Then there exists a facet $F$ of $P$ with corresponding inequality $\veb^T \vex \leq \delta$ such that the parallel hyperplane $H_0 = \{ \vex \in \R^n \colon \veb^T \vex = \ve0\}$ divides the inner cone $I(\vev)$ into two $n$-dimensional cones. In particular, an edge of $I(\vev)$ leaves $H_0$ on either side of $H_0$. If this were not the case, the corresponding hyperplanes of all facets of $P$ would intersect $I(\vev)$ within a proper face of $I(\vev)$, implying that $I(\vev)$ is an elementary cone. 

Therefore, consider the hyperplane $H = \{ \vex \in \R^n \colon \veb^T \vex = \veb^T \vev \}$ which is parallel to $F$ and incident to $\vev$. Then there exist edges incident to $\vev$ in $P$ that leave $H$ on either side of $H$. Namely, since $P$ is simple, there exist facets $F_1$ and $F_2$ incident to $\vev$ such that the edge $e_1$ leaving $F_1$ at $\vev$ leads to a vertex $\vez_1$ satisfying $\veb^T \vez_1 < \veb^T \vev$, and the edge $e_2$ leaving $F_2$ at $\vev$ leads to a vertex $\vez_2$ satisfying $\veb^T \vez_2 > \veb^T \vev$. 

Since $P$ is an $(n,d)$-parallelotope, $\vev$ belongs to a $d$-parallelotope face of $P$ and there exists a vertex $\veu$ in this face sharing exactly $n-d$ facets with $\vev$. Further, since $\vev \notin F$ and $P$ has only $n+d$ facets, we must have $\veu \in F$.

Now suppose $\veu$ belongs to neither $F_{1}$ nor $F_{2}$. Then  the edges $e_1$ and $e_2$ must be two of the d edges incident to $\vev$ in $P^{uv}$ since $F_1$ and $F_2$ are not among the $n-d$ facets shared between $\veu$ and $\vev$. We will say that an edge with direction $\mathbf{e}$ is parallel to a facet with normal $\veb$ if and only if $\veb^T \mathbf{e} = 0$. Hence, neither of the edges $e_1$ and $e_2$ are parallel to $F$. However, in order to satisfy the symmetric inner cone condition $I^{uv} (\vev) = -I^{uv}(\veu)$, there must exist a pair of edges incident to $\veu$ in $P^{uv}$ which are parallel to $e_1$ and $e_2$. This contradicts the fact that since $P$ is simple, only one of the $n$ edges incident to $\veu$ is not parallel to $F$. 

Therefore, $\veu$ must belong to at least one of $F_1$ and $F_2$. If $\veu$ belongs to both $F_1$ and $F_2$, we may take a step along the edge incident to $\veu$ that leaves $F_{2}$ to reach a new vertex in $F_1$ sharing exactly $n-d$ facets with $\vev$. Hence, there always exists a vertex sharing $n-d$ facets with $\vev$ that is incident to precisely one of $F_1$ and $F_2$. Without loss of generality we will assume $\veu \in F_1$ and $\veu \notin F_2$. 

There exists an edge incident to $\veu$ that leaves $F_{1}$ and intersects some facet $G$ of $P$ at a vertex $\vew$. Recall that $\veu \in F$, so since the edge between $\veu$ and $\vew$ does not leave $F$, we also have $\vew \in F$. As in the proof of \Cref{lem:n-k_parallelotope}, $\vew$ shares exactly $n-d$ facets with $\vev$ in $P$. Hence, if $G$ is not $F_2$, it holds that $\vew$ is a vertex in $F$ belonging to neither $F_1$ nor $F_2$, and we again obtain the above contradiction. Thus, we may assume $G = F_2$ and therefore $\vew \in F_2$. 

Since $\vev$ shares exactly $n-d$ facets with $\vew$ in the $(n,d)$-parallelotope $P$, the face $P^{wv}$ is a $d$-parallelotope. Because $\vew \notin F_1$, the edge $e_1$ incident to $\vev$ is contained in $P^{wv}$. Furthermore, since $\vew \in F$ while $\vev \notin F$, it holds that $P^{wv} \cap F$ is a facet of $P^{uv}$. Note that in a parallelotope, the only edges that are not parallel to a given facet must intersect that facet. The edge $e_1$ is not parallel to $F$, so it must not be parallel to any of its lower dimensional faces. Thus, $e_1$ is not parallel to the facet $P^{wv} \cap F$ of $P^{wv}$, implying that $e_1$ intersects $P^{wv} \cap F$. This yields $\vez_1 \in F$. 

Similarly, if we consider the edge $e_2$ in the $d$-parallelotope $P^{uv}$, we see that $\vez_2 \in F$. However, this would imply that $\veb^T \vev > \veb^T \vez_1 = \delta =  \veb^T \vez_2 > \veb^T \vev$, a contradiction. 
\eoproof  \end{proof}

\Cref{thm:opposite_inner_cones,thm:non-degerate_edges=circuits} now follow directly from \Cref{lem:opposite_inner_cones,lem:n-k_parallelotope,lem:edge_walks}. If a simple polytope is ECW, then the symmetric inner cone condition must be satisfied, implying that the polytope is an $(n,d)$-parallelotope. Conversely, any simple polytope that satisfies the symmetric inner cone must be an $(n,d)$-parallelotope, which then implies the ECW property if the polytope is given by a minimal representation.

\section*{Acknowledgments}
Borgwardt gratefully acknowledges support through an ORS Large Grant at the University of Colorado Denver and the Collaboration Grant for Mathematicians ``Polyhedral Theory in Data Analytics" of the Simons Foundation.

\bibliography{literature}
\bibliographystyle{plain}

\end{document}